\DeclareFontFamily{U}{rsfs}{} 
\DeclareFontShape{U}{rsfs}{n}{it}{<->
rsfs10}{} \DeclareSymbolFont{mscr}{U}{rsfs}{n}{it}
\DeclareSymbolFontAlphabet{\scr}{mscr}
\def\mathscr{\scr}
\newcommand{\leftrarrows}{\mathrel{\raise.75ex\hbox{\oalign{%
  $\scriptstyle\leftarrow$\cr
  \vrule width0pt height.5ex$\hfil\scriptstyle\relbar$\cr}}}}
\newcommand{\lrightarrows}{\mathrel{\raise.75ex\hbox{\oalign{%
  $\scriptstyle\relbar$\hfil\cr
  $\scriptstyle\vrule width0pt height.5ex\smash\rightarrow$\cr}}}}
\newcommand{\Rrelbar}{\mathrel{\raise.75ex\hbox{\oalign{%
  $\scriptstyle\relbar$\cr
  \vrule width0pt height.5ex$\scriptstyle\relbar$}}}}
\newcommand{\longleftrightarrows}{\leftrarrows\joinrel\Rrelbar\joinrel\lrightarrows}
\def\leftrightarrowsfill@{\arrowfill@\leftrarrows\Rrelbar\lrightarrows}
\newcommand{\xleftrightarrows}[2][]{\ext@arrow 3399\leftrightarrowsfill@{#1}{#2}}
\begin{document}
\def\e#1\e{\begin{equation}#1\end{equation}}
\def\ea#1\ea{\begin{align}#1\end{align}}
\def\eq#1{{\rm(\ref{#1})}}
\theoremstyle{plain}
\newtheorem{thm}{Theorem}[section]
\newtheorem{lem}[thm]{Lemma}
\newtheorem{prop}[thm]{Proposition}
\newtheorem{cor}[thm]{Corollary}
\newtheorem{quest}[thm]{Question}
\theoremstyle{definition}
\newtheorem{dfn}[thm]{Definition}
\newtheorem{ex}[thm]{Example}
\newtheorem{rem}[thm]{Remark}
\numberwithin{equation}{section}
\numberwithin{figure}{section}

\def\incl{\mathop{\rm incl}\nolimits}
\def\dim{\mathop{\rm dim}\nolimits}
\def\codim{\mathop{\rm codim}\nolimits}
\def\Im{\mathop{\rm Im}\nolimits}
\def\det{\mathop{\rm det}\nolimits}
\def\Ker{\mathop{\rm Ker}}
\def\Coker{\mathop{\rm Coker}}
\def\Flag{\mathop{\rm Flag}\nolimits}
\def\FlagSt{\mathop{\rm FlagSt}\nolimits}
\def\Iso{\mathop{\rm Iso}\nolimits}
\def\Aut{\mathop{\rm Aut}}
\def\End{\mathop{\rm End}\nolimits}
\def\Spec{\mathop{\rm Spec}\nolimits}
\def\Mot{\mathop{\rm Mot}\nolimits}
\def\Perf{{\rm Perf}}
\def\DSch{\mathop{\bf DSch}\nolimits}
\def\Sch{\mathop{\bf Sch}\nolimits}
\def\Art{\mathop{\bf Art}\nolimits}
\def\HSta{\mathop{\bf HSta}\nolimits}
\def\DSta{\mathop{\bf DSta}\nolimits}
\def\DArt{\mathop{\bf DArt}\nolimits}
\def\Ho{\mathop{\rm Ho}}
\def\GL{\mathop{\rm GL}}
\def\SL{\mathop{\rm SL}}
\def\SO{\mathop{\rm SO}}
\def\SU{\mathop{\rm SU}}
\def\Sp{\mathop{\rm Sp}}
\def\Spin{\mathop{\rm Spin}\nolimits}
\def\U{{\mathbin{\rm U}}}
\def\vol{\mathop{\rm vol}}
\def\inc{\mathop{\rm inc}}
\def\semi{{\rm semi}}
\def\cla{{\rm cla}}
\def\ran{{\rm an}}
\def\top{{\rm top}}
\def\ind{{\rm ind}}
\def\irr{{\rm irr}}
\def\red{{\rm red}}
\def\db{\bar\partial}
\def\rk{\mathop{\rm rk}}
\def\Crit{\mathop{\rm Crit}\nolimits}
\def\rank{\mathop{\rm rank}\nolimits}
\def\Hom{\mathop{\rm Hom}\nolimits}
\def\id{{\mathop{\rm id}\nolimits}}
\def\Id{{\mathop{\rm Id}\nolimits}}
\def\TopSta{{\mathop{\bf TopSta}\nolimits}}
\def\Map{{\mathop{\rm Map}\nolimits}}
\def\Ad{\mathop{\rm Ad}}
\def\irr{{\rm irr}}
\def\Ext{\mathop{\rm Ext}\nolimits}
\def\coh{{\rm coh}}
\def\qcoh{{\rm qcoh}}
\def\cs{{\rm cs}}
\def\Top{{\mathop{\bf Top}\nolimits}}
\def\Stab{\mathop{\rm Stab}\nolimits}
\def\ul{\underline}
\def\bs{\boldsymbol}
\def\ge{\geqslant}
\def\le{\leqslant\nobreak}
\def\boo{{\mathbin{\mathbf 1}}}
\def\cDist{\mathop{{\mathcal D}\kern -.1em\mathit{ist}}\nolimits}
\def\bcDist{\mathop{\boldsymbol{{\mathcal D}\kern -.1em\mathit{ist}}}\nolimits}
\def\cExact{\mathop{{\mathcal E}\kern -.1em\mathit{xact}}\nolimits}
\def\bcExact{\mathop{\boldsymbol{{\mathcal E}\kern -.1em\mathit{xact}}}\nolimits}
\def\O{{\mathcal O}}
\def\bA{{\mathbin{\mathbb A}}}
\def\bG{{\mathbin{\mathbb G}}}
\def\bH{{\mathbin{\mathbb H}}}
\def\bL{{\mathbin{\mathbb L}}}
\def\P{{\mathbin{\mathbb P}}}
\def\bT{{\mathbin{\mathbb T}}}
\def\K{{\mathbin{\mathbb K}}}
\def\R{{\mathbin{\mathbb R}}}
\def\Z{{\mathbin{\mathbb Z}}}
\def\Q{{\mathbin{\mathbb Q}}}
\def\N{{\mathbin{\mathbb N}}}
\def\C{{\mathbin{\mathbb C}}}
\def\CP{{\mathbin{\mathbb{CP}}}}
\def\bR{{\bs R}}
\def\bS{{\bs S}}
\def\bU{{\bs U}}
\def\bV{{\bs V}}
\def\bW{{\bs W}}
\def\bX{{\bs X}}
\def\bY{{\bs Y}}
\def\bZ{{\bs Z}}
\def\A{{\mathbin{\cal A}}}
\def\B{{\mathbin{\cal B}}}
\def\ovB{{\mathbin{\smash{\,\overline{\!\mathcal B}}}}}\def\G{{\mathbin{\cal G}}}
\def\M{{\mathbin{\cal M}}}
\def\cM{{\mathbin{\cal M}}}
\def\oM{{\mathbin{\,\,\overline{\!\!\mathcal M\!}\,}}}
\def\bcM{{\mathbin{\bs{\cal M}}}}
\def\boM{{\mathbin{\boldsymbol{\,\,\overline{\!\!\mathcal M\!}\,}}}}
\def\cB{{\mathbin{\cal B}}}
\def\cC{{\mathbin{\cal C}}}
\def\cD{{\mathbin{\cal D}}}
\def\cE{{\mathbin{\cal E}}}
\def\cF{{\mathbin{\cal F}}}
\def\cG{{\mathbin{\cal G}}}
\def\cH{{\mathbin{\cal H}}}
\def\E{{\mathbin{\cal E}}}
\def\F{{\mathbin{\cal F}}}
\def\cG{{\mathbin{\cal G}}}
\def\cH{{\mathbin{\cal H}}}
\def\cI{{\mathbin{\cal I}}}
\def\cJ{{\mathbin{\cal J}}}
\def\cK{{\mathbin{\cal K}}}
\def\cL{{\mathbin{\cal L}}}
\def\cN{{\mathbin{\cal N}\kern .04em}}
\def\cP{{\mathbin{\cal P}}}
\def\cQ{{\mathbin{\cal Q}}}
\def\cR{{\mathbin{\cal R}}}
\def\cS{{\mathbin{\cal S}}}
\def\T{{{\cal T}\kern .04em}}
\def\cW{{\mathbin{\cal W}}}
\def\cU{{\cal U}}
\def\cV{{\cal V}}
\def\cX{{\cal X}}
\def\cY{{\cal Y}}
\def\cZ{{\cal Z}}
\def\cV{{\cal V}}
\def\cW{{\cal W}}
\def\g{{\mathfrak g}}
\def\h{{\mathfrak h}}
\def\m{{\mathfrak m}}
\def\u{{\mathfrak u}}
\def\su{{\mathfrak{su}}}
\def\al{\alpha}
\def\be{\beta}
\def\ga{\gamma}
\def\de{\delta}
\def\io{\iota}
\def\ep{\epsilon}
\def\la{\lambda}
\def\ka{\kappa}
\def\th{\theta}
\def\ze{\zeta}
\def\up{\upsilon}
\def\vp{\varphi}
\def\si{\sigma}
\def\om{\omega}
\def\De{\Delta}
\def\Ka{{\rm K}}
\def\La{\Lambda}
\def\Om{\Omega}
\def\Ga{\Gamma}
\def\Si{\Sigma}
\def\Th{\Theta}
\def\Up{\Upsilon}
\def\Chi{{\rm X}}
\def\Tau{{T}}
\def\Nu{{\rm N}}
\def\pd{\partial}
\def\ts{\textstyle}
\def\st{\scriptstyle}
\def\sst{\scriptscriptstyle}
\def\w{\wedge}
\def\sm{\setminus}
\def\lt{\ltimes}
\def\bu{\bullet}
\def\sh{\sharp}
\def\di{\diamond}
\def\he{\heartsuit}
\def\op{\oplus}
\def\od{\odot}
\def\ot{\otimes}
\def\bt{\boxtimes}
\def\ov{\overline}
\def\bigop{\bigoplus}
\def\bigot{\bigotimes}
\def\iy{\infty}
\def\es{\emptyset}
\def\ra{\rightarrow}
\def\rra{\rightrightarrows}
\def\Ra{\Rightarrow}
\def\Longra{\Longrightarrow}
\def\ab{\allowbreak}
\def\longra{\longrightarrow}
\def\hookra{\hookrightarrow}
\def\dashra{\dashrightarrow}
\def\lb{\llbracket}
\def\rb{\rrbracket}
\def\ha{{\ts\frac{1}{2}}}
\def\t{\times}
\def\ci{\circ}
\def\ti{\tilde}
\def\d{{\rm d}}
\def\md#1{\vert #1 \vert}
\def\ms#1{\vert #1 \vert^2}
\def\bmd#1{\big\vert #1 \big\vert}
\def\bms#1{\big\vert #1 \big\vert^2}
\def\an#1{\langle #1 \rangle}
\def\ban#1{\bigl\langle #1 \bigr\rangle}
\def\o{\operatorname{o}}
\def\O{\mathcal{O}}
\title{Orientation data for moduli spaces of coherent sheaves over Calabi--Yau $3$-folds}
\author{Dominic Joyce and Markus Upmeier}
\date{}
\maketitle

\begin{quotation}
`If orientation data does not exist, then String Theory is dead.'

--- Maxim Kontsevich, at a conference in Budapest, May 2012.	
\end{quotation}

\medskip

\begin{abstract}
Let $X$ be a compact Calabi--Yau 3-fold, and write $\M,\oM$ for the moduli stacks of objects in $\coh(X),D^b\coh(X)$. There are natural line bundles $K_\M\ra\M$, $K_\oM\ra\oM$, analogues of canonical bundles. {\it Orientation data\/} on $\M,\oM$ is an isomorphism class of square root line bundles $K_\M^{1/2},K_\oM^{1/2}$, satisfying a compatibility condition on the stack of short exact sequences. It was introduced by Kontsevich and Soibelman \cite[\S 5]{KoSo1} in their theory of motivic Donaldson--Thomas invariants, and is also important in categorifying Donaldson--Thomas theory using perverse sheaves.

We show that natural orientation data can be constructed for all compact Calabi--Yau 3-folds $X$, and also for compactly-supported coherent sheaves and perfect complexes on noncompact Calabi--Yau 3-folds $X$ that admit a spin smooth projective compactification $X\hookra Y$. This proves a long-standing conjecture in Donaldson--Thomas theory.

These are special cases of a more general result. Let $X$ be a spin smooth projective 3-fold. Using the spin structure we construct line bundles $K_\M\ra\M$, $K_\oM\ra\oM$. We define {\it spin structures\/} on $\M,\oM$ to be isomorphism classes of square roots $K_\M^{1/2},K_\oM^{1/2}$. We prove that natural spin structures exist on $\M,\oM$. They are equivalent to orientation data when $X$ is a Calabi--Yau 3-fold with the trivial spin structure.

We prove this using our previous paper \cite{JoUp2}, which constructs `spin structures' (square roots of a certain complex line bundle $K_P^{E_\bu}\ra\B_P$) on differential-geometric moduli stacks $\B_P$ of connections on a principal $\U(m)$-bundle $P\ra X$ over a compact spin 6-manifold~$X$.
\end{abstract}

\setcounter{tocdepth}{2}
\tableofcontents

\section{Introduction}
\label{od1}

Let $X$ be a (compact) {\it Calabi--Yau\/ $3$-fold}, that is, a smooth projective $\C$-scheme of dimension 3 with trivial canonical bundle $K_X\cong\O_X$. Write $\M$ for the moduli stack of objects in the category of coherent sheaves $\coh(X)$, an Artin $\C$-stack, and $\oM$ for the moduli stack of objects in the derived category $D^b\coh(X)$, a higher $\C$-stack. Then $\M\subset\oM$ is an open substack.

The {\it Donaldson--Thomas invariants\/} $DT^\al(\tau)$ of a Calabi--Yau 3-fold $X$ are integers or rational numbers `counting' open substacks $\M^\al_{\rm st}(\tau)\subset\M^\al_{\rm ss}(\tau)\subset\M$ of $\tau$-(semi)stable coherent sheaves on $X$ with Chern character $\al$, for $\tau$ a suitable stability condition on $\coh(X)$. They are unchanged under deformations of $X$. They were proposed by Donaldson and Thomas \cite{DoTh}, and defined by Thomas \cite{Thom1} when $\M^\al_{\rm st}(\tau)=\M^\al_{\rm ss}(\tau)$, and by Joyce and Song \cite{JoSo} in the general case. They are important in String Theory as `numbers of BPS states'.

Donaldson--Thomas theory may be generalized in two directions, discussed in \S\ref{od43}. Firstly, one can refine $DT^\al(\tau)$ to `motivic Donaldson--Thomas invariants' $DT^\al_{\rm mot}(\tau)$ in a ring $\Mot$ rather than in $\Z$ or $\Q$, as in Kontsevich and Soibelman \cite{KoSo1,KoSo2}. And secondly, as in \cite{BBDJS,BBBJ} one can construct a perverse sheaf $P^\bu_{\M^\al_{\rm ss}(\tau)}$ on $\M^\al_{\rm ss}(\tau)$, with $DT^\al(\tau)=\sum_{i\in\Z}(-1)^i\dim\bH^i(P^\bu_{\M^\al_{\rm ss}(\tau)})$. The hypercohomology $\bH^*(P^\bu_{\M^\al_{\rm ss}(\tau)})$ is understood in String Theory as the `vector space of BPS states'.

Both of these generalizations require an extra structure on $\M$ or $\oM$ called {\it orientation data}, introduced by Kontsevich and Soibelman \cite[\S 5]{KoSo1}. There are natural line bundles $K_\M\ra\M$,  $K_\oM\ra\oM$, regarded as `canonical bundles' of $\M,\oM$, and orientation data is a choice of isomorphism class of square root line bundle $K_\M^{1/2}$ or $K_\oM^{1/2}$. For the $DT^\al_{\rm mot}(\tau)$ to satisfy multiplicative identities, these $K_\M^{1/2}$ or $K_\oM^{1/2}$ must satisfy compatibilities on the stacks $\cExact$ of short exact sequences in $\coh(X)$, or $\cDist$ of distinguished triangles in~$D^b\coh(X)$.

The goal of this paper is to prove, in Theorem \ref{od4thm1} below, that there are natural choices of orientation data on $\M$ and $\oM$ for all compact Calabi--Yau 3-folds $X$. We also give, in Theorem \ref{od4thm2}, a sufficient condition for the existence of natural orientation data for compactly-supported coherent sheaves and perfect complexes on noncompact (quasi-projective) Calabi--Yau 3-folds~$X$. 

This proves a long-standing conjecture in Donaldson--Thomas theory. So far as the authors are aware, orientation data was not known to exist for any compact Calabi--Yau 3-fold until now.

If $Y$ is a complex manifold then {\it spin structures\/} on $Y$ in the usual sense of differential geometry are in natural 1-1 correspondence with square roots $K_Y^{1/2}$ of the canonical bundle $K_Y$ of $Y$, where the corresponding spin bundle is $\slashed{S}=K_Y^{1/2}\ot\La^{0,*}T^*Y$. Because of this, the authors feel that `spin structure' on $\M,\oM$ would be a better term than `orientation data', but the latter is already established in the Donaldson--Thomas theory literature.

To prove Theorems \ref{od4thm1} and \ref{od4thm2}, we first study a more general problem. Let $X$ be a smooth projective 3-fold with a choice of spin structure $K_X^{1/2}$. Then we can again define natural line bundles $K_\M\ra\M$, $K_\oM\ra\oM$ using $K_X^{1/2}$. A {\it spin structure\/} on $\M$ or $\oM$ is a choice of isomorphism class of square roots $K_\M^{1/2}$ or $K_\oM^{1/2}$. We call a spin structure {\it compatible with direct sums\/} if it satisfies a condition involving direct sums in $\coh(X)$ or~$D^b\coh(X)$.

If $X$ is a Calabi--Yau 3-fold then it has a natural spin structure. The line bundles $K_\M,K_\oM$ reduce to the previous ones, and the compatibility conditions over exact sequences and direct sums are equivalent. Thus `orientation data' agrees with `spin structures compatible with direct sums' for Calabi--Yau 3-folds.

We show in Theorem \ref{od3thm2} that if $X$ is any spin smooth projective 3-fold, then there are natural choices of spin structures on $\M,\oM$ compatible with direct sums. When $X$ is a compact Calabi--Yau 3-fold this implies Theorem \ref{od4thm1}. If $X$ is a noncompact Calabi--Yau 3-fold with an open spin inclusion $X\hookra Y$ into a spin smooth projective 3-fold $Y$, Theorem \ref{od3thm2} for $Y$ implies Theorem \ref{od4thm2} for~$X$.

Theorem \ref{od3thm2} is proved using the main result \cite[Th.~5.12]{JoUp2} of a previous paper by the authors, stated as Theorem \ref{od2thm1} below. In \cite{JoUp2}, given a compact manifold $X$ and a complex elliptic operator $E_\bu$ on $X$, for any principal $\U(m)$-bundle $P\ra X$ we define the moduli stack $\B_P$ of connections $\nabla_P$ on $P$, as a topological stack, and we construct a natural topological complex line bundle $K_P^{E_\bu}\ra\B_P$. A {\it spin structure\/} on $\B_P$ is an isomorphism class of square roots~$(K_P^{E_\bu})^{1/2}$. 

In \cite[Th.~5.12]{JoUp2} we show that if $X$ is a compact spin 6-manifold and $E_\bu$ is the positive Dirac operator on $X$, then there are natural spin structures on $\B_P$ for all $\U(m)$-bundles $P\ra X$, all $m\ge 0$, with a compatibility under direct sums $P_1\op P_2$. This is a differential-geometric version of orientation data. 

To prove Theorem \ref{od3thm2}, we form the open substack $\M_{\rm vect}\subset\M$ of algebraic vector bundles on the spin smooth projective 3-fold $X$. Roughly speaking, there is a natural map $(\M_{\rm vect})^\top\ra\coprod_{\text{iso. classes $[P]$}}\B_P$. Using results of Cao, Gross and Joyce \cite{CGJ}, we show we can pull back the spin structures on $\B_P$ for all $P$ to a spin structure on $\M_{\rm vect}$, and then extend this to spin structures on~$\M,\oM$.

In \S\ref{od33} we also introduce a notion of {\it strong spin structure\/} on $\M,\oM$, and show in Theorem \ref{od3thm3} that strong spin structures on $\oM$ compatible with direct sums are controlled by classes in the group cohomology $H^*\bigl(K_0^\semi(X),\Z_2\bigr)$ of the semi-topological K-theory group $K_0^\semi(X)$. For Calabi--Yau 3-folds, such strong spin structures are equivalent to {\it strong orientation data}, which is important in the categorification of Donaldson--Thomas theory using perverse sheaves.

We begin in \S\ref{od2} by summarizing the results of \cite{JoUp2} on spin structures on connection moduli spaces $\B_P$. Section \ref{od3} studies spin structures on sheaf moduli spaces $\M,\oM$ for a spin smooth projective $m$-fold $X$ with $m$ odd. Section \ref{od4} restricts to $X$ Calabi--Yau, and relates spin structures and orientation data on $\M,\oM$. The proofs of Theorem \ref{od3thm2} and \ref{od3thm3} are deferred to~\S\ref{od5}--\S\ref{od6}.
\medskip

\noindent{\it Acknowledgements.} This research was partly funded by a Simons Collaboration Grant on `Special Holonomy in Geometry, Analysis and Physics'. The second author was partly funded by DFG grant UP 85/2-1 of the DFG priority program SPP 2026 `Geometry at Infinity'. The authors would like to thank Arkadij Bojko, Jacob Gross, Zheng Hua, Richard Thomas, and Yukinobu Toda for helpful conversations. 

\section{Spin structures on connection moduli spaces}
\label{od2}

We now explain some material from the authors' previous papers \cite{JTU,JoUp2} on connection moduli spaces $\B_P$ and `spin structures' upon them. Our first definition comes from \cite[Def.~1.1]{JTU} and~\cite[Def.~2.1]{JoUp2}.

\begin{dfn}
\label{od2def1}
Let $X$ be a compact manifold, and $P\ra X$ be a principal $\U(m)$-bundle for $m\ge 0$. (Our previous papers \cite{JTU,JoUp1,CGJ} discussed principal $G$-bundles $P\ra X$ for general Lie groups $G$, but in this paper we restrict to $G=\U(m)$.) We write $\Ad(P)\ra X$ for the vector bundle with fibre $\u(m)$ defined by $\Ad(P)=(P\t\u(m))/\U(m)$, where $\U(m)$ acts on $P$ by the principal bundle action, and on the Lie algebra $\u(m)$ of $\U(m)$ by the adjoint action.

Write $\A_P$ for the set of connections $\nabla_P$ on the principal bundle $P\ra X$. This is a real affine space modelled on the infinite-dimensional vector space $\Ga^\iy(\Ad(P)\ot T^*X)$, and we make $\A_P$ into a topological space using the $C^\iy$ topology on $\Ga^\iy(\Ad(P)\ot T^*X)$. Here if $E\ra X$ is a vector bundle then $\Ga^\iy(E)$ denotes the vector space of smooth sections of $E$. Note that $\A_P$ is contractible. 

Write $\G_P=\Aut(P)$ for the infinite-dimensional Lie group of $\U(m)$-equi\-var\-iant diffeomorphisms $\ga:P\ra P$ with $\pi\ci\ga=\pi$. Then $\G_P$ acts continuously on $\A_P$ by gauge transformations. Write $\B_P=[\A_P/\G_P]$ for the moduli space of gauge equivalence classes of connections on $P$, considered as a {\it topological stack\/} in the sense of Metzler \cite{Metz} and Noohi~\cite{Nooh1,Nooh2}.

Here $P\ra X$ has an {\it associated complex vector bundle\/} $F\ra X$ with fibre $\C^m$, given by $F=(P\t\C^m)/\U(m)$. There is a Hermitian metric $h_F$ on the fibres, induced by the Hermitian metric $h_{\C^m}$ on $\C^m$, and $P$ is the bundle of $\U(m)$-frames of $(F,h_F)$. There is a natural 1-1 correspondence between principal bundle connections $\nabla_P$ on $P$, and vector bundle connections $\nabla_F$ on $F$ preserving $h_F$. Thus we may also regard $\A_P,\B_P$ as moduli spaces of connections on~$F$.
\end{dfn}

We define direct sums $P_1\op P_2$ and morphisms $\Phi_{P_1,P_2}:\B_{P_1}\t\B_{P_2}\ra\B_{P_1\op P_2}$, following \cite[Ex.~2.11]{JTU} and~\cite[Ex.~2.9]{JoUp2}.

\begin{dfn}
\label{od2def2}
Let $X$ be a compact manifold, and $P_1\ra X$, $P_2\ra X$ be principal $\U(m_1)$- and $\U(m_2)$-bundles for $m_1,m_2\ge 0$. There is an inclusion $\U(m_1)\t\U(m_2)\hookra\U(m_1+m_2)$ mapping $(A,B)\mapsto\bigl(\begin{smallmatrix} A & 0 \\ 0 & B \end{smallmatrix}\bigr)$ for $A\in\U(m_1)$ and $B\in\U(m_2)$. Define a principal $\U(m_1+m_2)$-bundle $P_1\op P_2\ra X$ by 
\e
P_1\op P_2=(P_1\t_XP_2\t\U(m_1+m_2))/(\U(m_1)\t\U(m_2)),
\label{od2eq1}
\e
where $\U(m_1)\t\U(m_2)$ acts on $P_1\t_XP_2$ via the $\U(m_1)$- and $\U(m_2)$-actions on $P_1,P_2$, and on $\U(m_1+m_2)$ by the inclusion $\U(m_1)\t\U(m_2)\hookra\U(m_1+m_2)$ and the right action of $\U(m_1+m_2)$ on itself. We use the notation $P_1\op P_2$ as if $F_1\ra X$, $F_2\ra X$ are the associated complex vector bundles of $P_1,P_2$, then $F_1\op F_2$ is the associated complex vector bundle of $P_1\op P_2$.

Define a continuous map $\hat\Phi_{P_1,P_2}:\A_{P_1}\t\A_{P_2}\longra\A_{P_1\op P_2}$ by $(\nabla_{P_1},\nabla_{P_2})\mapsto\nabla_{P_1\op P_2}$, where $\nabla_{P_1\op P_2}$ is the connection induced on $P_1\op P_2$ by $\nabla_{P_1},\nabla_{P_2}$ using \eq{od2eq1}. Then $\hat\Phi_{P_1,P_2}$ is equivariant under $\G_{P_1}\t\G_{P_2}$ and the natural morphism $\G_{P_1}\t\G_{P_2}\ra\G_{P_1\op P_2}$. Thus $\hat\Phi_{P_1,P_2}$ descends to a morphism of topological stacks $\Phi_{P_1,P_2}:\B_{P_1}\t\B_{P_2}\longra\B_{P_1\op P_2}$ mapping $([\nabla_{P_1}],[\nabla_{P_2}])\mapsto[\nabla_{P_1\op P_2}]$ on points.
\end{dfn}

The next two definitions come from~\cite[Def.s 3.1 \& 3.2]{JoUp2}.

\begin{dfn}
\label{od2def3}
Let $X$ be a compact manifold. Suppose we are given complex vector bundles $E_0,E_1\ra X$, of the same rank $r$, and a complex linear elliptic partial differential operator $D:\Ga^\iy(E_0)\ra\Ga^\iy(E_1)$, of degree $d$. As a shorthand we write $E_\bu=(E_0,E_1,D)$. With respect to connections $\nabla_{E_0}$ on $E_0\ot\bigot^iT^*X$ for $0\le i<d$, when $e\in\Ga^\iy(E_0)$ we may write
\begin{equation*}
D(e)=\ts\sum_{i=0}^d a_i\cdot \nabla_{E_0}^ie,
\end{equation*}
where $a_i\in \Ga^\iy(E_0^*\ot E_1\ot S^iTX)$ for $i=0,\ldots,d$. The condition that $D$ is {\it elliptic\/} is that $a_d\vert_x\cdot\ot^d\xi:E_0\vert_x\ra E_1\vert_x$ is an isomorphism for all $x\in X$ and $0\ne\xi\in T_x^*X$, and the {\it symbol\/} $\si(D)$ of $D$ is defined using~$a_d$.

Now suppose we are given Hermitian metrics $h_{E_0},h_{E_1}$ (that is, Euclidean metrics on $E_0,E_1$ compatible with the complex structures) on the fibres of $E_0,E_1$, and a volume form $\d V$ on $X$. Then there is a unique {\it adjoint operator\/} $D^*:\Ga^\iy(E_1)\ra\Ga^\iy(E_0)$, which is also a complex linear elliptic partial differential operator of degree $d$, satisfying for all $e_0\in\Ga^\iy(E_0)$, $e_1\in\Ga^\iy(E_1)$
\begin{equation*}
\int_X h_{E_1}(D(e_0),e_1)\d V=\int_X h_{E_0}(e_0,D^*(e_1))\d V.
\end{equation*}
It is complex anti-linear in $D$, as $h_{E_0},h_{E_1}$ are Hermitian.

Write $\bar E_0,\bar E_1$ for the complex conjugate vector bundles of $E_0,E_1$ (the same real vector bundles, but the complex structures change sign), and $\bar D:\Ga^\iy(\bar E_0)\ra\Ga^\iy(\bar E_1)$ for the complex conjugate operator (as real vector spaces and operators $\Ga^\iy(\bar E_j)=\Ga^\iy(E_j)$ and $\bar D=D$). We call $D$ {\it antilinear self-adjoint\/} if $E_0=\bar E_1$, and $h_{E_0}=h_{E_1}$, and $D^*=\bar D$. For example, if $(X,g)$ is a spin Riemannian manifold of dimension $8n+6$ then the positive Dirac operator $\slashed{D}_+:\Ga^\iy(S_+)\ra \Ga^\iy(S_-)$ is antilinear self-adjoint.
\end{dfn}

\begin{dfn} 
\label{od2def4}
Suppose $X,\U(m),P,\A_P,\B_P$ are as Definition \ref{od2def1}, and $E_\bu$ is a complex elliptic operator on $X$ as in Definition \ref{od2def3}. Let $\nabla_P\in\A_P$. Then $\nabla_P$ induces a connection $\nabla_{\Ad(P)}$ on the real vector bundle $\Ad(P)\ra X$. Thus we may form the twisted complex elliptic operator
\e
\begin{split}
D^{\nabla_{\Ad(P)}}&:\Ga^\iy(\Ad(P)\ot_\R E_0)\longra\Ga^\iy(\Ad(P)\ot_\R E_1),\\
D^{\nabla_{\Ad(P)}}&:f\longmapsto \ts\sum_{i=0}^d (\id_{\Ad(P)}\ot a_i)\cdot \nabla_{\Ad(P)\ot E_0}^if,
\end{split}
\label{od2eq2}
\e
where $\nabla_{\Ad(P)\ot E_0}$ are the connections on $\Ad(P)\ot_\R E_0\ot_\R\bigot^iT^*X$ for $0\le i<d$ induced by $\nabla_{\Ad(P)}$ and~$\nabla_{E_0}$.

Since $D^{\nabla_{\Ad(P)}}$ is a complex linear elliptic operator on a compact manifold $X$, it has finite-dimensional kernel $\Ker(D^{\nabla_{\Ad(P)}})$ and cokernel $\Coker(D^{\nabla_{\Ad(P)}})$. The {\it determinant\/} $\det_\C(D^{\nabla_{\Ad(P)}})$ is the 1-dimensional complex vector space
\begin{equation*}
\det_\C(D^{\nabla_{\Ad(P)}})=\det_\C\Ker(D^{\nabla_{\Ad(P)}})\ot_\C\bigl(\det_\C\Coker(D^{\nabla_{\Ad(P)}})\bigr)^*,
\end{equation*}
where if $V$ is a finite-dimensional complex vector space then $\det_\C V=\La_\C^{\dim_\C V}V$.

These operators $D^{\nabla_{\Ad(P)}}$ vary continuously with $\nabla_P\in\A_P$, so they form a family of elliptic operators over the base topological space $\A_P$. Thus as in Atiyah and Singer \cite{AtSi} there is a natural complex line bundle $\hat K{}^{E_\bu}_P\ra\A_P$ with fibre $\hat K{}^{E_\bu}_P\vert_{\nabla_P}=\det_\C(D^{\nabla_{\Ad(P)}})$ at each $\nabla_P\in\A_P$. It is equivariant under the action of $\G_P$ on $\A_P$, and so pushes down to a complex line bundle $K^{E_\bu}_P\ra\B_P$ on the topological stack $\B_P$. We call $K^{E_\bu}_P$ the {\it determinant line bundle\/} of $\B_P$.

A {\it spin structure\/} on $\B_P$ is an isomorphism class $\bigl[(K^{E_\bu}_P)^{1/2}\bigr]$ of square root line bundles $(K^{E_\bu}_P)^{1/2}$ for $K^{E_\bu}_P\ra\B_P$. That is, a spin structure is an equivalence class of pairs $(J,\jmath)$, where $J\ra\B_P$ is a topological complex line bundle on $\B_P$, and $\jmath:J^{\ot^2}\ra K^{E_\bu}_P$ is an isomorphism, and pairs $(J,\jmath),(J',\jmath')$ are equivalent if there exists an isomorphism $\io:J\ra J'$ with~$\jmath=\jmath'\ci(\io\ot\io):J^{\ot^2}\ra K^{E_\bu}_P$.
\end{dfn}

\begin{rem}
\label{od2rem1}
If $Y$ is a complex manifold then spin structures on $Y$ in the usual sense of differential geometry, up to isomorphism, are in natural 1-1 correspondence with isomorphism classes $[K_Y^{1/2}]$ of square roots $K_Y^{1/2}$ of the canonical bundle $K_Y$ of $Y$, where the corresponding spin bundle is~$\slashed{S}=K_Y^{1/2}\ot\La^{0,*}T^*Y$.

We think of $\B_P$ as like an infinite-dimensional complex manifold, and $K^{E_\bu}_P$ as like its canonical bundle. This is why we call $\bigl[(K^{E_\bu}_P)^{1/2}\bigr]$ a `spin structure'. A similar analogy justifies the naming of the `orientations' on $\B_P$, defined using real elliptic operators $E_\bu$ on $X$, studied in~\cite{CGJ,JTU,JoUp1}.
\end{rem}

The next definition summarizes parts of~\cite[\S 4.1--\S 4.5]{JoUp2}.

\begin{dfn}
\label{od2def5}
Let $X$ be a compact manifold, and $P_1\ra X$, $P_2\ra X$ be principal $\U(m_1)$- and $\U(m_2)$-bundles for $m_1,m_2\ge 0$, with associated complex vector bundles $F_1\ra X$, $F_2\ra X$. Write $\bar F_1$ for the complex conjugate vector bundle of $F_1$. Then $\bar F_1\ot_\C F_2$ is a complex vector bundle on $X$, with fibre $\C^{m_1m_2}$, which we may write as
\begin{equation*}
\bar F_1\ot_\C F_2=(P_1\ot_XP_2\t\C^{m_1m_2})/(\U(m_1)\t\U(m_2)).
\end{equation*}

Let $E_\bu$ be a complex elliptic operator on $X$ as in Definition \ref{od2def3}. Let $(\nabla_{P_1},\nabla_{P_2})\in\A_{P_1}\t\A_{P_2}$. Then $(\nabla_{P_1},\nabla_{P_2})$ induces a connection $\nabla_{\bar F_1\ot F_2}$ on the complex vector bundle $\bar F_1\ot_\C F_2\ra X$. Thus as for \eq{od2eq2} we may form the twisted complex elliptic operator
\e
\begin{split}
D^{\nabla_{\bar F_1\ot F_2}}&:\Ga^\iy(\bar F_1\ot_\C F_2\ot_\C E_0)\longra\Ga^\iy(\bar F_1\ot_\C F_2\ot_\C E_1),\\
D^{\nabla_{\bar F_1\ot F_2}}&:f\longmapsto \ts\sum_{i=0}^d (\id_{\bar F_1\ot F_2}\ot a_i)\cdot \nabla_{\bar F_1\ot F_2\ot E_0}^if.
\end{split}
\label{od2eq3}
\e
As for $\hat K{}^{E_\bu}_P,K^{E_\bu}_P$ in Definition \ref{od2def4}, we define complex line bundles $\hat L{}^{E_\bu}_{P_1,P_2}\ra\A_{P_1}\t\A_{P_2}$ and $L^{E_\bu}_{P_1,P_2}\ra\B_{P_1}\t\B_{P_2}$ to be the determinant line bundle of the family of complex elliptic operators \eq{od2eq3} on $\A_{P_1}\t\A_{P_2}$, and its descent to $\B_{P_1}\t\B_{P_2}$. We construct some isomorphisms of line bundles involving~$L^{E_\bu}_{P_1,P_2}$:
\begin{itemize}
\setlength{\itemsep}{0pt}
\setlength{\parsep}{0pt}
\item[(a)] Let $P\ra X$ be a principal $\U(m)$-bundle, with associated complex vector bundle $F\ra X$. Take $P_1=P_2=P$, and consider the diagonal morphism $\De_{\B_P}:\B_P\ra\B_P\t\B_P$. The pullback $\De_{\B_P}^*(L^{E_\bu}_{P,P})$ is the determinant line bundle of the family of complex elliptic operators $D^{\nabla_{\bar F\ot F}}$. Since $\Ad(P)\ot_\R\C\cong F^*\ot_\C F\cong \bar F\ot_\C F$ we have natural isomorphisms
\begin{equation*}
\Ad(P)\ot_\R E_i\cong \bar F\ot_\C F\ot_\C E_i,\quad i=0,1.
\end{equation*}
Comparing \eq{od2eq2} and \eq{od2eq3}, we see that $D^{\nabla_{\Ad(P)}}\cong D^{\nabla_{\bar F\ot F}}$. Thus we have
\e
K^{E_\bu}_P\cong \De_{\B_P}^*(L^{E_\bu}_{P,P}).
\label{od2eq4}
\e
\item[(b)] Write $\Si_{\B_{P_1},\B_{P_2}}:\B_{P_1}\t\B_{P_2}\ra \B_{P_2}\t\B_{P_1}$ for the isomorphism swapping the factors. Exchanging $P_1,P_2$, we have a line bundle $L^{E_\bu}_{P_2,P_1}\ra\B_{P_2}\t\B_{P_1}$ from twisting $E_\bu$ by connections on $\bar F_2\ot_\C F_1\ra X$, so $\Si_{\B_{P_1},\B_{P_2}}^*(L^{E_\bu}_{P_2,P_1})$ is a complex line bundle on~$\B_{P_1}\t\B_{P_2}$.
\item[(c)] Suppose that $E_\bu$ is antilinear self-adjoint, as in Definition \ref{od2def3}, so that $D^*=\bar D$. Then we have an isomorphism of complex elliptic operators
\e
(D^{\nabla_{\bar F_1\ot F_2}})^*\cong \overline{D^{\nabla_{\bar F_2\ot F_1}}}.
\label{od2eq5}
\e
Taking determinants of \eq{od2eq5} gives an isomorphism on $\B_{P_1}\t\B_{P_2}$
\e
(L^{E_\bu}_{P_1,P_2})^*\cong\overline{\Si_{\B_{P_1},\B_{P_2}}^*(L^{E_\bu}_{P_2,P_1})}.
\label{od2eq6}
\e
But we can define a Hermitian metric on $L^{E_\bu}_{P_1,P_2}$ using the metrics in the problem, so that $\ov{L^{E_\bu}_{P_1,P_2}}\cong (L^{E_\bu}_{P_1,P_2})^*$. Combining this with \eq{od2eq6} gives
\e
L^{E_\bu}_{P_1,P_2}\cong\Si_{\B_{P_1},\B_{P_2}}^*(L^{E_\bu}_{P_2,P_1}).
\label{od2eq7}
\e
\item[(d)] For $i=0,1$ we have isomorphisms of complex vector bundles on $X$:
\e
\begin{split}
\Ad(&P_1\op P_2)\ot_\R E_i\cong (\bar F_1\op\bar F_2)\ot_\C (F_1\op F_2)\ot_\C E_i\\
&\cong (\bar F_1\ot_\C F_1\ot_\C E_i)\op(\bar F_1\ot_\C F_2\ot_\C E_i)\\
&\qquad \op
(\bar F_2\ot_\C F_1\ot_\C E_i)\op
(\bar F_2\ot_\C F_2\ot_\C E_i)\\
&\cong (\Ad(P_1)\ot_\R E_i)\op(\bar F_1\ot_\C F_2\ot_\C E_i)\\
&\qquad \op
(\bar F_2\ot_\C F_1\ot_\C E_i)\op
(\Ad(P_2)\ot_\R E_i).
\end{split}
\label{od2eq8}
\e
Given connections $\nabla_{P_1},\nabla_{P_2}$ on $P_1,P_2$ inducing $\nabla_{P_1\op P_2}$ on $P_1\op P_2$ as in Definition \ref{od2def2}, equation \eq{od2eq8} induces an isomorphism of elliptic operators
\e
D^{\nabla_{\Ad(P_1\op P_2)}}\cong D^{\nabla_{\Ad(P_1)}}\op D^{\nabla_{\bar F_1\ot F_2}}\op D^{\nabla_{\bar F_2\ot F_1}}\op D^{\nabla_{\Ad(P_2)}}.
\label{od2eq9}
\e
Taking determinants gives an isomorphism
\begin{align*}
\det_\C\bigl(D^{\nabla_{\Ad(P_1\op P_2)}}\bigr)&\cong \det_\C\bigl(D^{\nabla_{\Ad(P_1)}}\bigr)\ot \det_\C\bigl(D^{\nabla_{\bar F_1\ot F_2}}\bigr)\\
&\qquad \ot \det_\C\bigl(D^{\nabla_{\bar F_2\ot F_1}}\bigr)\ot \det_\C\bigl(D^{\nabla_{\Ad(P_2)}}\bigr).
\end{align*}
This is the fibre at $([\nabla_{P_1}],[\nabla_{P_2}])$ of an isomorphism on $\B_{P_1}\t\B_{P_2}$:
\e
\begin{split}
&\Phi_{P_1,P_2}^*(K_{P_1\op P_2}^{E_\bu})\cong \\
&\pi_{\B_{P_1}}^*(K_{P_1}^{E_\bu})\ot L^{E_\bu}_{P_1,P_2}\ot\Si_{\B_{P_1},\B_{P_2}}^*(L^{E_\bu}_{P_2,P_1})\ot\pi_{\B_{P_2}}^*(K_{P_2}^{E_\bu}).
\end{split}
\label{od2eq10}
\e
If also $E_\bu$ is antilinear self-adjoint, \eq{od2eq7} and \eq{od2eq10} give an isomorphism
\end{itemize}
\e
\begin{split}
\phi^{E_\bu}_{P_1,P_2}:\pi_{\B_{P_1}}^*(K_{P_1}^{E_\bu})\ot\pi_{\B_{P_2}}^*(K_{P_2}^{E_\bu})\ot (L^{E_\bu}_{P_1,P_2})^{\ot^2} 
\longra\Phi_{P_1,P_2}^*(K_{P_1\op P_2}^{E_\bu})&.
\end{split}
\label{od2eq11}
\e
\end{dfn}

The next theorem \cite[Th.s 5.12 \& 5.1(b)]{JoUp2} will be central to our paper.

\begin{thm}
\label{od2thm1}
Suppose\/ $(X,g)$ is a compact, oriented, spin Riemannian\/ $6$-manifold, and take\/ $E_\bu$ to be the positive Dirac operator\/ $\slashed{D}_+:\Ga^\iy(S_+)\ra\Ga^\iy(S_-),$ an antilinear self-adjoint complex linear elliptic operator. 

Then we can construct \begin{bfseries}canonical choices\end{bfseries} of spin structures\/ $\bigl[(K_P^{E_\bu})^{1/2}\bigr]$ on\/ $\B_P$ for all principal\/ $\U(m)$-bundles\/ $P\ra X,$ for all\/ $m\ge 0$. Furthermore:
\begin{itemize}
\setlength{\itemsep}{0pt}
\setlength{\parsep}{0pt}
\item[{\bf(a)}] Suppose\/ $P\ra X,$ $P'\ra X$ are principal\/ $\U(m)$-bundles and\/ $\rho:P\ra P'$ is an isomorphism. This induces isomorphisms\/ $\B_P\cong\B_{P'}$ and\/ $K_P^{E_\bu}\cong K_{P'}^{E_\bu},$ both of which are independent of the choice of\/ $\rho$. These identify the canonical spin structures\/~$\bigl[(K_P^{E_\bu})^{1/2}\bigr]\cong\bigl[(K_{P'}^{E_\bu})^{1/2}\bigr]$.
\item[{\bf(b)}] These canonical spin structures are \begin{bfseries}compatible with direct sums\end{bfseries}, in the sense that if\/ $P_1\ra X,$ $P_2\ra X$ are principal\/ $\U(m_1)$- and\/ $\U(m_2)$-bundles, so that\/ $P_1\op P_2\ra X$ is a principal\/ $\U(m_1+m_2)$-bundle, and\/ $(K_{P_1}^{E_\bu})^{1/2},\ab(K_{P_2}^{E_\bu})^{1/2},\ab(K_{P_1\op P_2}^{E_\bu})^{1/2}$ are representatives for the spin structures on\/ $\B_{P_1},\B_{P_2},\B_{P_1\op P_2},$ then there exists an isomorphism
\e
\begin{split}
\xi^{E_\bu}_{P_1,P_2}:
\pi_{\B_{P_1}}^*\bigl((K_{P_1}^{E_\bu})^{1/2}\bigr)&\ot\pi_{\B_{P_2}}^*\bigl((K_{P_2}^{E_\bu})^{1/2}\bigr)\ot L^{E_\bu}_{P_1,P_2}\\
&\qquad\longra\Phi_{P_1,P_2}^*\bigl((K_{P_1\op P_2}^{E_\bu})^{1/2}\bigr)
\end{split}
\label{od2eq12}
\e
on\/ $\B_{P_1}\t\B_{P_2}$ with\/ $\xi^{E_\bu}_{P_1,P_2}\ot\xi^{E_\bu}_{P_1,P_2}=\phi^{E_\bu}_{P_1,P_2}$ in equation \eq{od2eq11}. 
\end{itemize}
\end{thm}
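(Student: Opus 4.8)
The plan is to convert the statement into a question in (stable) homotopy theory and resolve it by obstruction theory, using the spin hypothesis and the dimension $6$ to force the relevant groups to vanish.

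\textbf{Step 1 (homotopy model).} As in \cite{JTU,JoUp1}, I would replace the topological stack $\B_P$ by its homotopy type: if $P$ is classified by $f_P\colon X\ra B\U(m)$ then $\B_P\simeq\Map_{f_P}(X,B\U(m))$, the union of components of the mapping space corresponding to $f_P$, and under this equivalence $\Phi_{P_1,P_2}$ is induced by the block inclusion $B\U(m_1)\t B\U(m_2)\ra B\U(m_1+m_2)$. Taking the disjoint union over all $[P]$ and group-completing turns $\coprod_P\B_P$ into (a space homotopy equivalent to) the infinite loop space $\Map(X,\Z\t BU)$ with its $E_\infty$-structure coming from $\op$, and the line bundles $K_P^{E_\bu}$ together with the isomorphisms \eq{od2eq4}--\eq{od2eq11} organise into a single complex line bundle $K\ra\coprod_P\B_P$ equipped with a multiplicative (in an $E_\infty$ sense) structure. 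In this language a spin structure compatible with direct sums is a lift of the classifying map of $K$ along the squaring self-map $B\U(1)\ra B\U(1)$ (a fibration with fibre $K(\Z_2,1)$) that is compatible with the multiplicative structures; existence is then governed by a tower of obstruction classes in $\Z_2$-cohomology, and canonicity by a normalisation together with the vanishing of the corresponding indeterminacy groups.

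\textbf{Step 2 (fibrewise square root and an index computation).} The hypothesis that $E_\bu=\slashed{D}_+$ is antilinear self-adjoint gives, for each twisted operator, $\Coker(D^{\nabla_{\Ad(P)}})\cong\overline{\Ker(D^{\nabla_{\Ad(P)}})}$, hence after fixing Hermitian metrics a canonical isomorphism $\det_\C(D^{\nabla_{\Ad(P)}})\cong\bigl(\det_\C\Ker(D^{\nabla_{\Ad(P)}})\bigr)^{\ot^2}$. Thus $K_P^{E_\bu}$ has a canonical square root over the locus where $\dim\Ker$ is locally constant, and the obstruction to a \emph{global} square root lives in $H^2(\B_P;\Z_2)$ and equals the mod-$2$ reduction of $c_1(K_P^{E_\bu})$. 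To control it I would compute $c_1(K_P^{E_\bu})$ by the Atiyah--Singer families index theorem for the universal twisted Dirac operator over $X\t\B_P$: up to sign it is the degree-$2$ part of the fibre integral $\int_X\hat A(TX)\cdot\mathrm{ch}\bigl(\mathbf{Ad}(\mathbf P)\ot_\R\C\bigr)$. Here $\dim X=6$ kills the high-degree $\hat A$-terms, $X$ being spin makes the $\hat A$-class and the relevant Wu-class identities available, and $\mathbf{Ad}(\mathbf P)\ot_\R\C\cong\bar{\mathbf F}\ot_\C\mathbf F$ being self-conjugate forces its odd Chern-character components to vanish; combined with the fibrewise square root, this yields a coherent trivialisation of the mod-$2$ obstruction for all $P$ at once, not merely vanishing for each individual $P$.

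\textbf{Step 3 (canonicity and coherence).} I would normalise by choosing the obvious spin structure on $\B_P$ for $m=0$ and propagate it through the multiplicative isomorphisms \eq{od2eq11}, which reduces the construction to $\U(1)$-bundles and to checking independence of the order and bracketing of direct sums. The remaining $H^1(\B_P;\Z_2)$-indeterminacy and the higher coherences --- associativity and symmetry of the $\xi^{E_\bu}_{P_1,P_2}$, and independence of $\rho$ in part (a) --- are then settled by reducing to generators: one writes $H_1(\B_P;\Z_2)$ and the bordism-type groups carrying the obstructions in terms of $K$-theory of $X$ and of $X$ times low-dimensional test manifolds, and invokes the vanishing of the spin bordism groups $\Omega_n^{\mathrm{Spin}}$ for $n=3,5,6,7$ --- the range forced by $\dim X=6$ --- to kill every term. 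Part (a) is then immediate: the automorphisms of $P$ act on $\B_P=[\A_P/\G_P]$ by conjugation, hence trivially up to canonical $2$-isomorphism, and the construction is natural, so the canonical spin structures on $\B_P$ and $\B_{P'}$ are identified.

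The main obstacle is Step 3: the hard point is not producing a square root on each $\B_P$ separately, but arranging all of them to be simultaneously compatible with \emph{every} direct sum in an associative and symmetric way, and canonical under isomorphisms --- a higher-coherence problem whose resolution rests on showing that every obstruction and indeterminacy group appearing in the $E_\infty$-obstruction tower vanishes in the range relevant to a $6$-manifold. This is exactly where the spin condition and the special (quaternionic, antilinear self-adjoint) nature of the Dirac operator in dimension $6$ are indispensable.
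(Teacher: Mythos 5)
This theorem is not proved in the present paper at all: it is quoted from the authors' earlier work \cite{JoUp2}, whose strategy is quite different from yours --- there, square roots of the complex determinant line bundles $K^{E_\bu}_P$ over the spin $6$-manifold $X$ are related, via a map from $\B_Q$ to the loop space of $\B_P$, to \emph{orientations} of moduli spaces $\B_Q$ of connections on bundles $Q\ra X\t\cS^1$ over a spin $7$-manifold, and then the canonical orientations for compact spin $7$-manifolds constructed in \cite{JoUp1} are fed in. Your proposal instead attempts a direct $E_\iy$-obstruction-theoretic argument on the group-completed mapping space, and as it stands it has genuine gaps at its two load-bearing steps.

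In Step 2 the families index theorem only computes $\mathrm{ch}(\mathrm{Ind})$ in \emph{rational} cohomology, so it can never certify that the mod $2$ reduction of $c_1(K^{E_\bu}_P)$ vanishes: divisibility by $2$ is vacuous rationally, and $H^2(\B_P,\Z)$ has torsion in general. Moreover your self-conjugacy argument kills the odd Chern character components of $\bar{\mathbf F}\ot\mathbf F$, but the degree-$2$ part of $\int_X\hat A(TX)\,\mathrm{ch}(\bar{\mathbf F}\ot\mathbf F)$ comes from the degree-$8$ components of the integrand, which do not vanish; so even the rational first Chern class of $K^{E_\bu}_P$ is generically nonzero, and nothing about the $\Z_2$-obstruction has been shown. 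The pointwise isomorphism $\det_\C(D)\cong(\det_\C\Ker D)^{\ot 2}$ holds only on strata where $\dim\Ker$ is locally constant and you give no mechanism for gluing across strata, so the claimed ``coherent trivialisation of the mod-$2$ obstruction for all $P$ at once'' is unsubstantiated. Relatedly, in Step 3 the reduction of the $H^1(\B_P;\Z_2)$-indeterminacy and of all higher coherences (associativity, symmetry, independence of $\rho$, and the twisted multiplicativity by $L^{E_\bu}_{P_1,P_2}$ in \eq{od2eq12}, which is not plain $E_\iy$-multiplicativity of $K$) to the vanishing of $\Om^{\rm Spin}_n$ for $n=3,5,6,7$ is asserted rather than argued; identifying which bordism-type groups carry these obstructions is exactly the hard content, and in the actual proofs (\cite{JoUp1,JoUp2,JTU,CGJ}) it occupies most of the work, via excision and explicit mod-$2$ index computations rather than the rational index theorem. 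Note also that mere existence of a square root of $K_{\oM}$ follows from the easy symmetry argument of Theorem \ref{od3thm1}; the substance of Theorem \ref{od2thm1} is canonicity and compatibility with direct sums, which your sketch defers precisely to the unproved Step 3.
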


The proof of Theorem \ref{od2thm1} in \cite{JoUp2} is complicated. We show spin structures on $\B_P$ for $\U(m)$-bundles $P\ra X$ can be related to orientations on $\B_Q$ for $\U(m)$-bundles $Q\ra X\t\cS^1$, by mapping $\B_Q$ to the loop space of $\B_P$. Then we use the construction in \cite{JoUp1} of canonical orientations on $\B_Q$ for $\U(m)$-bundles $P\ra Y$ over compact spin 7-manifolds $Y$ to construct canonical spin structures on~$\B_P$.

\section{Spin structures on algebraic moduli spaces}
\label{od3}

We now develop an analogue of the material of \S\ref{od2} for moduli stacks $\M,\oM$ of (complexes of) coherent sheaves on a smooth projective $\C$-scheme $X$, rather than moduli stacks $\B_P$ of connections on a principal $\U(m)$-bundle $P\ra X$. We will use parallel notation: $\Phi,K_\M,L_\M,\De_\M,\Si_\M,\phi_\M,\xi_\M$ in \S\ref{od31}--\S\ref{od32} below are the analogues of $\Phi_{P_1,P_2}, K^{E_\bu}_P,L^{E_\bu}_{P_1,P_2},\De_{\B_P},\Si_{\B_{P_1},\B_{P_2}},\phi^{E_\bu}_{P_1,P_2},\xi^{E_\bu}_{P_1,P_2}$ in~\S\ref{od2}.

\subsection{Background in algebraic geometry}
\label{od31}

This paper uses a lot of advanced technology --- stacks and higher stacks, derived categories, and so on --- which would take many pages to explain. So we will just give references, and hope that readers already have the necessary background. The next remark reviews the background material we will need. 

\begin{rem}
\label{od3rem1}
{\bf(a)} We work throughout over the field of complex numbers $\C$. For foundations of Algebraic Geometry, including $\C$-schemes $X$, see Hartshorne \cite{Hart}. Write $\Sch_\C$ for the category of $\C$-schemes.
\smallskip

\noindent{\bf(b)} Let $X$ be a smooth projective $\C$-scheme. Write $\coh(X)$ and $\qcoh(X)$ for the abelian categories of coherent and quasicoherent sheaves on $X$, as in Hartshorne \cite[\S II.5]{Hart} and Huybrechts and Lehn \cite{HuLe}.

Write $D^b\coh(X)$ for the bounded derived category of coherent sheaves on $X$, and $D\qcoh(X)$ for the unbounded derived category of quasicoherent sheaves on $X$, so that $D^b\coh(X)\subset D\qcoh(X)$. See Gelfand and Manin \cite{GeMa} for the theory of derived categories, and Huybrechts \cite{Huyb} for derived categories $D^b\coh(X)$.

When we use functors on derived categories, such as $f^*:D^b\coh(Y)\ra D^b\coh(X)$ and $f_*:D^b\coh(X)\ra D^b\coh(Y)$ for suitable morphisms $f:X\ra Y$ in $\Sch_\C$, we always mean derived functors, as in Huybrechts~\cite{Huyb}.
\smallskip

\noindent{\bf(c)} Write $\Perf(X)\subset D^b\coh(X)$ for the full triangulated subcategory of {\it perfect complexes\/} $\cE^\bu$, which are locally quasi-isomorphic to a bounded complex of vector bundles. We have $\Perf(X)=D^b\coh(X)$ when $X$ is smooth. 

A perfect complex $\cE^\bu$ has a {\it dual\/} perfect complex $(\cE^\bu)^\vee$. 

A perfect complex $\cE^\bu$ has a {\it determinant line bundle\/} $\det\cE^\bu$, a line bundle on $X$. If $\ra\cE^\bu\ra\cF^\bu\ra\cG^\bu\,{\buildrel[1]\over\longra}$ is a distinguished triangle in $\Perf(X)$ then there is a natural isomorphism $\det\cF^\bu\cong\det\cE^\bu\ot\det\cG^\bu$. See Knudsen and Mumford \cite{KnMu} and Quillen \cite{Quil} for the theory of determinant line bundles.
\smallskip

\noindent{\bf(d)} We will be interested in {\it Artin $\C$-stacks}, as in  G\'omez \cite{Gome}, Laumon and Moret-Bailly \cite{LaMo}, Olsson \cite{Olss}, and de Jong \cite{Jong}. All Artin stacks in this paper will be {\it locally of finite type}.

Classical $\C$-schemes and algebraic $\C$-spaces may be written as
functors
\e
S:\{\text{commutative $\C$-algebras}\}\longra\{\text{sets}\}.
\label{od3eq1}
\e
Extending this, classical Artin $\C$-stacks may be defined as functors
\e
S:\{\text{commutative $\C$-algebras}\}\longra\{\text{groupoids}\},
\label{od3eq2}
\e
satisfying many conditions. Artin $\C$-stacks form a 2-category $\Art_\C$.

The categories $\coh(X),\qcoh(X),D^b\coh(X),D\qcoh(X),\Perf(X)$ also make sense for Artin $\C$-stacks, with the same behaviour as in~{\bf(b)\rm,\bf(c)}.
\smallskip

\noindent{\bf(e)} Let $X$ be a smooth projective $\C$-scheme. We often write $\M$ for the moduli stack of objects in $\coh(X)$. It is an Artin $\C$-stack. As a functor \eq{od3eq2},
\begin{align*}
\M:A\longmapsto \bigl\{&\text{groupoid of coherent sheaves $\cE\ra X\t\Spec A$,}\\
&\text{flat over $\Spec A$,   isomorphisms of such $\cE$}\bigr\}.	
\end{align*}
The $\C$-points $[F]$ of $\M$ are isomorphism classes of $F\in\coh(X)$. There is a {\it universal coherent sheaf\/} $\cU$ in $\Perf(X\t\M)$, with $\cU\vert_{X\t\{[F]\}}\cong F$.

There is a natural morphism $\Phi:\M\t\M\ra\M$ in $\Art_\C$ mapping $\Phi:([F_1],[F_2])\ra F_1\op F_2$ on the level of $\C$-points. There is a natural isomorphism
\e
(\id_X\t\Phi)^*(\cU)\cong \pi_{12}^*(\cU)\op \pi_{13}^*(\cU)	
\label{od3eq3}
\e
in $\Perf(X\t\cM\t\cM)$, where $\pi_{ij}$ maps $X\t\cM\t\cM$ to the product of its $i^{\rm th}$ and $j^{\rm th}$ factors.
\smallskip

\noindent{\bf(f)} We shall also use the theory of {\it higher stacks}. Higher $\C$-stacks are explained in Simpson \cite{Simp2} and To\"en and Vezzosi \cite{Toen1,ToVe1,ToVe2}. They form an $\iy$-category $\HSta_\C$. Extending \eq{od3eq2}, one may regard higher $\C$-stacks as $\iy$-functors
\e
S:\{\text{commutative $\C$-algebras}\}\longra\{\text{$\iy$-groupoids}\}
\label{od3eq4}
\e
satisfying many conditions, where a model for $\iy$-groupoids is the $\iy$-category of Kan simplicial sets. Artin $\C$-stacks embed in higher $\C$-stacks $\Art_\C\hookra\HSta_\C$, so we can regard Artin $\C$-stacks as special examples of higher $\C$-stacks.	
\smallskip

\noindent{\bf(g)} Let $S$ be a higher $\C$-stack. Then To\"en \cite[\S 3.1.7]{Toen1} defines a triangulated category $L_\qcoh(S)$ of modules on $S$, which agrees with $D\qcoh(S)$ if $S$ is an Artin $\C$-stack. There is a full subcategory of perfect complexes $\Perf(S)\subset L_\qcoh(S)$. An object $\cE^\bu$ in $\Perf(S)$ has a dual $(\cE^\bu)^\vee$ and a determinant line bundle $\det\cE^\bu$, with the same behaviour as in {\bf(b)\rm,\bf(c)}. When working with higher and derived stacks, one should use $L_\qcoh(S)$ instead of~$D\qcoh(S)$.
\smallskip

\noindent{\bf(h)} Let $X$ be a smooth projective $\C$-scheme. Extending {\bf(f)}, we often write $\oM$ for the moduli stack of objects in $D^b\coh(X)$, which exists by To\"en--Vaqui\'e \cite{ToVa}. It is a higher $\C$-stack, with $\C$-points the isomorphism classes $[F^\bu]$ of objects $F^\bu$ in $D^b\coh(X)$. Embedding $\coh(X)\hookra D^b\coh(X)$ in the usual way gives an inclusion $\M\hookra\oM$ as an open $\C$-substack. If $F^\bu\in D^b\coh(X)$ with $\Ext^{<0}(F^\bu,F^\bu)\ne 0$ then $\oM$ is not an Artin $\C$-stack near $[F^\bu]$. This is why we need higher $\C$-stacks.

There is a {\it universal complex\/} $\cU^\bu$ in $\Perf(X\t\oM)\subset L_\qcoh(X\t\oM)$, with $\cU^\bu\vert_{X\t\{[F^\bu]\}}\cong F^\bu$ for each $F^\bu$ in $D^b\coh(X)$. This corresponds to a morphism $u:X\t\oM\ra\Perf_\C,$ where $\Perf_\C$ is a higher stack which classifies perfect complexes, given by $\Perf_\C=t_0(\bs\Perf_\C)$ for $\bs\Perf_\C$ the derived stack from To\"en and Vezzosi \cite[Def.~1.3.7.5]{ToVe2}. Then $\Perf_\C$ is just $\oM$ for $X=\Spec\C$ the point. In fact $u$ realizes $\oM$ as the mapping stack~$\oM=\Map_{\HSta_\C}(X,\Perf_\C)$.

There is a natural morphism $\bar\Phi:\oM\t\oM\ra\oM$ in $\HSta_\C$ mapping $\bar\Phi:([F_1^\bu],[F_2^\bu])\ra F_1^\bu\op F_2^\bu$ on the level of $\C$-points. There is a natural isomorphism
\begin{equation*}
(\id_X\t\Phi)^*(\cU^\bu)\cong \pi_{12}^*(\cU^\bu)\op \pi_{13}^*(\cU^\bu)	
\end{equation*}
in $\Perf(X\t\oM\t\oM)$, where $\pi_{ij}$ maps $X\t\oM\t\oM$ to its $i^{\rm th}$ and $j^{\rm th}$ factors.
\end{rem}

We will use the material in Remark \ref{od3rem1} freely from now on. 

\subsection{Spin structures in algebraic geometry}
\label{od32}

The next (rather long) definition sets up the situation we will study.

\begin{dfn}
\label{od3def1}
Let $X$ be a smooth projective $\C$-scheme, of complex dimension $m$. We call $X$ a {\it smooth projective $m$-fold}. Write $X^\ran$ for the underlying complex $m$-manifold of $X$, which is also a smooth $2m$-manifold. As a topological space, $X^\ran$ is the set of $\C$-points of $X$ with the complex analytic~topology.

An ({\it algebraic\/}) {\it spin structure\/} on $X$ is a choice of square root $K_X^{1/2}$ of the canonical bundle $K_X=\La^m_\C T^*X$ of $X$. Explicitly, a spin structure is a pair $(J,\jmath)$ of a line bundle $J\ra X$ and an isomorphism $\jmath:J\ot J\ra K_X$. If $X$ has a spin structure we call it a {\it spin smooth projective $m$-fold}.

It is well known that such algebraic spin structures $(J,\jmath)$ correspond to spin structures on the smooth manifold $X^\ran$ in the usual sense of differential geometry, where the corresponding spin bundle is $\slashed{S}=J\ot\La^{0,*}T^*X$.

Fix $X,m$ and $(J,\jmath)$ as above. As in Remark \ref{od3rem1}(e), write $\M$ for the moduli stack of objects in $\coh(X)$, as an Artin $\C$-stack with $\C$-points $[F]$ for $F$ in $\coh(X)$, and $\Phi:\M\t\M\ra\M$ for the direct sum morphism mapping $\Phi:([F_1],[F_2])\ra F_1\op F_2$ on $\C$-points, and $\cU\in\Perf(X\t\M)$ for the universal sheaf. As in Remark \ref{od3rem1}(h), write $\oM$ for the moduli stack of objects in $D^b\coh(X)$, as a higher $\C$-stack with $\C$-points $[F^\bu]$ for $F^\bu\in D^b\coh(X)$, and $\bar\Phi:\oM\t\oM\ra\oM$ for the direct sum morphism mapping $\bar\Phi:([F_1^\bu],[F_2^\bu])\ra F_1^\bu\op F_2^\bu$ on $\C$-points, and $\cU^\bu\in\Perf(X\t\oM)$ for the universal complex.

Given a product of stacks $S_1\t\cdots\t S_n$, we will write $\pi_i:S_1\t\cdots\t S_n\ra S_i$ for the projection to the $i^{\rm th}$ factor, and $\pi_{ij}:S_1\t\cdots\t S_n\ra S_i\t S_j$ for the projection to the product of the $i^{\rm th}$ and $j^{\rm th}$ factors, and so on.

Define complexes $\cC^\bu\in\Perf(\M)$ and $\cD^\bu\in\Perf(\M\t\M)$ by 
\ea
\cC^\bu&=(\pi_2)_*\bigl(\pi_1^*(J)\ot\cU^\vee\ot\cU\bigr),
\label{od3eq5}\\
\cD^\bu&=(\pi_{23})_*\bigl(\pi_1^*(J)\ot\pi_{12}^*(\cU^\vee)\ot\pi_{13}^*(\cU)\bigr),
\label{od3eq6}
\ea
where in \eq{od3eq5} we work on $X\t\M$ with projections $\pi_1:X\t\M\ra X$, and so on, and in \eq{od3eq6} we work on $X\t\M\t\M$. Here the pushforwards $(\pi_2)_*,(\pi_{23})_*$ are well defined as $\pi_2,\pi_{23}$ have fibre $X$, and so are representable, smooth, and proper. Write $\De_\M:\M\ra\M\t\M$ for the diagonal morphism. Then using $\id_X\t\De_\M:X\t\M\ra X\t\M\t\M$ we see there is a natural isomorphism
\e
\cC^\bu\cong\De_\M^*(\cD^\bu).
\label{od3eq7}
\e

Define line bundles $K_\M$ on $\M$ and $L_\M$ on $\M\t\M$ by $K_\M=\det\cC^\bu$ and $L_\M=\det\cD^\bu$. We think of $K_\M$ as a kind of `canonical bundle' of $\M$, for reasons explained in \S\ref{od43}. Equation \eq{od3eq7} yields an isomorphism, analogous to~\eq{od2eq4}:
\e
K_\M\cong\De_\M^*(L_\M).
\label{od3eq8}
\e
The cohomology groups of $\cC^\bu,\cD^\bu$ at $\C$-points $[F],([F_1],[F_2])$ are
\e
\begin{split}
h^i(\cC^\bu\vert_{[F]})&\cong \Ext^i(F,F\ot J),\\
h^i(\cD^\bu\vert_{([F_1],[F_2])})&\cong \Ext^i(F_1,F_2\ot J).
\end{split}
\label{od3eq9}
\e
Thus the determinant line bundles $L_\M,K_\M$ have fibres
\begin{align*}
K_\M\vert_{[F]}&\cong \bigot\nolimits_{i=0}^m\bigl(\La^{\rm top}_\C \Ext^i(F,F\ot J)\bigr)^{(-1)^i},\\
L_\M\vert_{([F_1],[F_2])}&\cong \bigot\nolimits_{i=0}^m\bigl(\La^{\rm top}_\C \Ext^i(F_1,F_2\ot J)\bigr)^{(-1)^i}.
\end{align*}

As $X$ is a smooth projective $\C$-scheme of dimension $m$ with canonical bundle $K_X$, by Huybrechts \cite[Th.~3.34]{Huyb} (see also Brav and Dyckerhoff \cite[\S 2.1, \S 5.2]{BrDy} in a more derived/stacky context), {\it Grothendieck--Verdier duality\/} for the pushforward $\pi_{23}:X\t\M\t\M\ra\M\t\M$ implies that if $\cG^\bu\in\Perf(X\t\M\t\M)$ then there is a natural isomorphism in $\Perf(\M\t\M)$:
\e
\bigl((\pi_{23})_*(\cG^\bu)\bigr)^\vee\cong(\pi_{23})_*	\bigl((\cG^\bu)^\vee \ot \pi_1^*(K_X)[m]\bigr).
\label{od3eq10}
\e
Write $\Si_\M:\M\t\M\ra\M\t\M$ for the involution swapping the factors. Then
\e
\begin{split}
(\cD^\bu)^\vee&=\bigl((\pi_{23})_*\bigl(\pi_1^*(J)\ot\pi_{12}^*(\cU^\vee)\ot\pi_{13}^*(\cU)\bigr)\bigr)^\vee\\
&\cong (\pi_{23})_*\bigl(\pi_1^*(J^*)\ot\pi_{12}^*(\cU)\ot\pi_{13}^*(\cU^\vee)\ot \pi_1^*(K_X)[m]\bigr)\\
&\cong (\pi_{23})_*\bigl(\pi_1^*(J)\ot\pi_{13}^*(\cU^\vee)\ot\pi_{12}^*(\cU)\bigr)[m]\\
&\cong \Si_\M^*\bigl((\pi_{23})_*\bigl(\pi_1^*(J)\ot\pi_{12}^*(\cU^\vee)\ot\pi_{13}^*(\cU)\bigr)\bigr)[m]=\Si_\M^*(\cD^\bu)[m],
\end{split}
\label{od3eq11}
\e
using \eq{od3eq6} in the first and fifth steps, \eq{od3eq10} in the second, $J\ot J\cong K_X$ in the third, and swapping round second and third factors in $X\t\M\t\M$ using $\Si_\M$ in the fourth. Taking determinant line bundles in \eq{od3eq11} gives an isomorphism
\e
L_\M^{(-1)^{m+1}}\cong \Si_\M^*(L_\M),
\label{od3eq12}
\e
which when $m$ is odd is an analogue of \eq{od2eq7}. Applying $\De_\M^*$ and using \eq{od3eq7} and $\Si_\M\ci\De_\M=\De_\M$ yields an isomorphism
\e
K_\M^{(-1)^{m+1}}\cong K_\M.
\label{od3eq13}
\e

Under the morphism $\Phi\t\Phi:\M\t\M\t\M\t\M\ra\M\t\M$ we have
\ea
(\Phi&\t\Phi)^*(\cD^\bu)=(\Phi\t\Phi)^*\bigl((\pi_{23})_*\bigl(\pi_1^*(J)\ot\pi_{12}^*(\cU^\vee)\ot\pi_{13}^*(\cU)\bigr)\bigr)
\nonumber\\
&\cong(\pi_{2345})_*\bigl(\pi_1^*(J)\ot\pi_{123}^*((\id_X\t\Phi)^*(\cU)^\vee)\ot\pi_{145}^*((\id_X\t\Phi)^*(\cU))\bigr)
\nonumber\\
&\cong(\pi_{2345})_*\bigl(\pi_1^*(J)\ot(\pi_{12}^*(\cU)^\vee\op \pi_{13}^*(\cU)^\vee)\ot(\pi_{14}^*(\cU)\op \pi_{15}^*(\cU))\bigr)
\nonumber\\
&\cong(\pi_{2345})_*\bigl((\pi_1^*(J)\ot\pi_{12}^*(\cU)^\vee\ot\pi_{14}^*(\cU))\op
(\pi_1^*(J)\ot\pi_{12}^*(\cU)^\vee\ot\pi_{15}^*(\cU))
\nonumber\\
&\qquad\op(\pi_1^*(J)\ot\pi_{13}^*(\cU)^\vee\ot\pi_{14}^*(\cU))\op
(\pi_1^*(J)\ot\pi_{13}^*(\cU)^\vee\ot\pi_{15}^*(\cU))\bigr)
\nonumber\\
&\cong(\pi_{13})^*\bigl((\pi_{23})_*\bigl(\pi_1^*(J)\ot\pi_{12}^*(\cU^\vee)\ot\pi_{13}^*(\cU)\bigr)\bigr)\nonumber\\
&\quad\op(\pi_{14})^*\bigl((\pi_{23})_*\bigl(\pi_1^*(J)\ot\pi_{12}^*(\cU^\vee)\ot\pi_{13}^*(\cU)\bigr)\bigr)
\nonumber\\
&\quad\op(\pi_{23})^*\bigl((\pi_{23})_*\bigl(\pi_1^*(J)\ot\pi_{12}^*(\cU^\vee)\ot\pi_{13}^*(\cU)\bigr)\bigr)\nonumber\\
&\quad\op(\pi_{24})^*\bigl((\pi_{23})_*\bigl(\pi_1^*(J)\ot\pi_{12}^*(\cU^\vee)\ot\pi_{13}^*(\cU)\bigr)\bigr)
\nonumber\\
&\cong\pi_{13}^*(\cD^\bu)\op \pi_{14}^*(\cD^\bu)\op\pi_{23}^*(\cD^\bu)\op\pi_{24}^*(\cD^\bu).
\label{od3eq14}
\ea
Here the terms inside the pushforwards $(\pi_{\cdots})_*(\cdots)$ live on $X\t\M\t\M$ in the second and sixth steps, and on $X\t\M\t\M\t\M\t\M$ in the third--fifth. We use \eq{od3eq6} in the first and sixth step, commute pullbacks and pushforwards in the second and fifth, and use \eq{od3eq3} in the third.

Taking determinant line bundles in \eq{od3eq14} (see Remark \ref{od3rem2}(b) on this) gives an isomorphism of line bundles on $\M\t\M\t\M\t\M$:
\e
(\Phi\t\Phi)^*(L_\M)\cong \pi_{13}^*(L_\M)\ot \pi_{14}^*(L_\M)\ot\pi_{23}^*(L_\M)\ot\pi_{24}^*(L_\M).
\label{od3eq15}
\e
The diagonal morphism $\De_{\M\t\M}:\M\t\M\ra\M\t\M\t\M\t\M$ satisfies
\begin{gather*}
(\Phi\t\Phi)\ci\De_{\M\t\M}=\De_\M\ci\Phi, \qquad \pi_{13}\ci\De_{\M\t\M}=\De_\M\ci\pi_1, \\
\pi_{14}\ci\De_{\M\t\M}=\id, \quad \pi_{23}\ci\De_{\M\t\M}=\Si_\M, \quad \pi_{24}\ci\De_{\M\t\M}=\De_\M\ci\pi_2.
\end{gather*}
Applying $\De_{\M\t\M}^*$ to \eq{od3eq15} and using these and equations \eq{od3eq8} and \eq{od3eq12} gives an isomorphism of line bundles on~$\M\t\M$:
\e
\phi_\M:\pi_1^*(K_\M)\ot\pi_2^*(K_\M)\ot L_\M\ot L_\M^{(-1)^{m+1}}\longra\Phi^*(K_\M),
\label{od3eq16}
\e
which when $m$ is odd is analogous to~\eq{od2eq11}.

By a similar but simpler argument to \eq{od3eq14}--\eq{od3eq15}, under the morphisms $\Phi\t\id_\M,\id_\M\t\Phi:\M\t\M\t\M\ra\M\t\M$ we have natural isomorphisms
\ea
\chi_\M:\pi_{13}^*(L_\M)\ot\pi_{23}^*(L_\M)\longra(\Phi\t\id_\M)^*(L_\M),
\label{od3eq17}\\
\psi_\M:\pi_{12}^*(L_\M)\ot\pi_{13}^*(L_\M)\longra(\id_\M\t\Phi)^*(L_\M).
\label{od3eq18}
\ea
Then we can show that the following diagram on $\M\t\M\t\M$ commutes:
\e
\begin{gathered}
\xymatrix@!0@C=92pt@R=54pt{
*+[r]{\begin{subarray}{l} \ts \pi_1^*(K_\M)\!\ot\!\pi_2^*(K_\M)\!\ot\!\pi_3^*(K_\M)\ot \\
\ts \pi_{12}^*(L_\M^{\ot^2})\!\ot\!  \pi_{13}^*(L_\M^{\ot^2})\!\ot\!\pi_{23}^*(L_\M^{\ot^2})\end{subarray}}
\ar[rrr]_(0.59){\begin{subarray}{l} \quad\pi_{12}^*(\phi_\M)\ot \\  \id_{\pi_3^*(K_\M)}\ot\chi_\M^{\ot^2}\end{subarray}} \ar@<-10pt>[d]^{\id_{\pi_1^*(K_\M)}\ot \pi_{23}^*(\phi_\M)\ot \psi_\M^{\ot^2}}
&&& *+[l]{\begin{subarray}{l} \ts (\Phi\!\t\!\id_\M)^* \bigl(\pi_1^*(K_\M) \\ \ts \quad\;\>{}\ot\pi_2^*(K_\M)\!\ot\! L_\M^{\ot^2}\bigr)\end{subarray}} \ar@<10pt>[d]_{(\Phi\t\id_\M)^*(\phi_\M)}
\\
*+[r]{\begin{subarray}{l} \ts (\id_\M\!\t\!\Phi)^*\bigl(\pi_1^*(K_\M) \\ \ts{}\ot\pi_2^*(K_\M)\!\ot\! L_\M^{\ot^2}\bigr)\end{subarray}} \ar[rr]^(0.63){(\id_\M\t\Phi)^*(\phi_\M)} && {\begin{subarray}{l} \ts (\id_\M\!\t\!\Phi)^*\\
\ts{}\ci\Phi^*(K_\M)\end{subarray}} \ar@{=}[r] &  *+[l]{\begin{subarray}{l} \ts (\Phi\!\t\!\id_\M)^*\\
\ts{}\ci\Phi^*(K_\M).\!\end{subarray}}
}\!\!\!
\end{gathered}
\label{od3eq19}
\e
This is a lift to line bundles of the commutative diagram in $\Ho(\Art_\C)$:
\e
\begin{gathered}
\xymatrix@!0@C=280pt@R=30pt{
*+[r]{\M\t\M\t\M} \ar[r]_{\Phi\t\id_\M} \ar[d]^{\id_\M\t\Phi} & *+[l]{\M\t\M} \ar[d]_\Phi \\
*+[r]{\M\t\M} \ar[r]^\Phi & *+[l]{\M,\!}}
\end{gathered}
\label{od3eq20}
\e
since direct sum in $\coh(X)$ is associative.

We can also generalize all the above to the moduli stack $\oM$ of objects in $D^b\coh(X)$ in the obvious way. We replace $\M,\cU,\Phi$ by $\oM,\cU^\bu,\bar\Phi$ as in Remark \ref{od3rem1}(h), and we write $\bar\cC{}^\bu,\bar\cD{}^\bu,K_\oM,L_\oM,\Si_\oM,\phi_\oM,\chi_\oM,\psi_\oM$ for the analogues of $\cC^\bu,\cD^\bu,\ldots,\chi_\M,\psi_\M$. Then the analogues of \eq{od3eq5}--\eq{od3eq20} hold. Note that $\M\subset\oM$ is an open substack, and $K_\oM\vert_\M=K_\M$, $L_\oM\vert_{\M\t\M}=L_\M$, and equations \eq{od3eq5}--\eq{od3eq20} for $\oM$ restrict on $\M,\M\t\M,\ldots$ to \eq{od3eq5}--\eq{od3eq20} for~$\M$.
\end{dfn}

\begin{rem}
\label{od3rem2}
{\bf(a)} The line bundles $K_\M,L_\M$ in Definition \ref{od3def1} behave differently when $m$ is even, and when $m$ is odd. When $m$ is even, equation \eq{od3eq13} implies that $K_\M\cong P_\M\ot_{\Z_2}\O_\M$ for a principal $\Z_2$-bundle $P_\M\ra\M$, and we think of sections of $P_\M$ as `orientations' on $\M$. Also \eq{od3eq16} reduces to $\Phi^*(K_\M)\cong\pi_1^*(K_\M)\ot\pi_2^*(K_\M)$, so we can work just with $K_\M$. The case $m$ even was studied in Cao, Gross and Joyce \cite{CGJ},  as part of our series on orientations~\cite{CGJ,JTU,JoUp1}.

When $m$ is odd, \eq{od3eq13} is the identity and is boring, and \eq{od3eq16} becomes
\e
\phi_\M:\pi_1^*(K_\M)\ot\pi_2^*(K_\M)\ot L_\M^{\ot^2}\longra\Phi^*(K_\M).
\label{od3eq21}
\e
In this case, as in Definition \ref{od3def2} below, instead of orientations, it is interesting to study square roots $K_\M^{1/2}$, thought of as `spin structures' on~$\M$.
\smallskip

\noindent{\bf(b)} To deduce \eq{od3eq15} from \eq{od3eq14}, we use the fact that if $\cE^\bu,\cF^\bu$ are perfect complexes on $S$ then there is an isomorphism
\e
\det(\cE^\bu\op\cF^\bu)\cong (\det\cE^\bu)\ot(\det\cF^\bu).
\label{od3eq22}
\e
As discussed in \cite[\S 3.1.1]{Upme}, the isomorphism \eq{od3eq22} depends on an orientation convention, and swapping $\cE^\bu$ and $\cF^\bu$ changes its sign by $(-1)^{\rank\cE^\bu\rank\cF^\bu}$. Thus the isomorphisms \eq{od3eq15}--\eq{od3eq16} depend up to sign on the order we chose to write the four factors in the last line of~\eq{od3eq14}.
\end{rem}

We can now define an algebro-geometric analogue of spin structures on differential-geometric moduli spaces in \cite{JoUp2}.

\begin{dfn}
\label{od3def2}
Let $X$ be a spin smooth projective $m$-fold for $m$ odd, and use the notation of Definition \ref{od3def1}. We define a {\it spin structure on\/} $\M$ (or on $\oM$) to be an isomorphism class $[K_\M^{1/2}]$ of square root line bundles $K_\M^{1/2}$ for the line bundle $K_\M$ on $\M$ (or an isomorphism class $[K_\oM^{1/2}]$ of square roots $K_\oM^{1/2}$ on $\oM$). 

Explicitly, a spin structure on $\M$ is an isomorphism class $[J_\M,\jmath]$ of pairs $(J_\M,\jmath)$, where $J_\M\ra\M$ is a line bundle and $\jmath:J_\M\ot J_\M\ra K_\M$ is an isomorphism of line bundles, and two pairs $(J_\M,\jmath),(J'_\M,\jmath')$ are isomorphic if there exists an isomorphism $\io:J_\M\ra J'_\M$ with~$\jmath=\jmath'\ci(\io\ot\io)$.

As $\M\subset\oM$ is an open substack and $K_\oM\vert_\M=K_\M$, a spin structure on $\oM$ restricts to a spin structure on $\M$.

We call a spin structure $[K_\M^{1/2}]$ on $\M$ {\it compatible with direct sums\/} if choosing any representative $K_\M^{1/2}$ for $[K_\M^{1/2}]$, as for \eq{od2eq12} there exists an isomorphism
\e
\xi_\M:\pi_1^*(K_\M^{1/2})\ot\pi_2^*(K_\M^{1/2})\ot L_\M\longra\Phi^*(K_\M^{1/2})
\label{od3eq23}
\e
on $\M\t\M$ with $\xi_\M\ot\xi_\M=\phi_\M$ in equation \eq{od3eq21}. We call a spin structure $[K_\oM^{1/2}]$ on $\oM$ {\it compatible with direct sums\/} if the analogue $\xi_\oM$ exists on~$\oM\t\oM$.
\end{dfn}

The next theorem is a topological existence result for spin structures. It is based on Nekrasov and Okounkov \cite[\S 6]{NeOk}, who were interested in orientation data for Calabi--Yau 3-folds as in~\S\ref{od43}.

\begin{thm}[Extension of Nekrasov--Okounkov {\cite[\S 6]{NeOk}}]
\label{od3thm1}
In the situation of Definition\/ {\rm\ref{od3def1}} with\/ $m$ odd, the topological complex line bundle\/ $K_{\oM{}^\top} \ra \oM{}^\top$ admits a square root. In particular, combined with Proposition\/ {\rm\ref{od5prop5}} below, this implies the existence of spin structures\/ $K_\M^{1/2},K_\oM^{1/2}$ on\/ $\M$ and\/~$\oM$.

In greater generality, let\/ $B$ be a topological space with the homotopy type of a CW complex,\/ ${\De\colon B \ra B\t B}$ be the diagonal,\/ ${\Si\colon B\t B \ra B\t B}$ the braid, and\/ ${u \in H^2(B\t B, \Z)}$. If\/ ${\Si^* u = u}$, then\/ ${\De^*u \in H^2(B,\Z)}$ is divisible by two.
\end{thm}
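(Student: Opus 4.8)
The plan is to split off the manifestly even contribution coming from the two projections and then reduce what remains to a statement modulo $2$ which is forced by the \emph{integrality} of $u$. We may assume $B$ connected, since $H^2(B;\Z)=\prod_\al H^2(B_\al;\Z)$ over connected components and $\De$ only involves the blocks $B_\al\t B_\al$, on which $\Si$ restricts to the swap. Fixing a basepoint, the cofibre sequence $B\vee B\ra B\t B\xrightarrow{q}B\wedge B$ yields, for coefficients $R=\Z$ or $\Z_2$, a $\Si$-equivariant short exact sequence
\[
0\ra\widetilde H^2(B\wedge B;R)\xrightarrow{q^*}H^2(B\t B;R)\xrightarrow{(i_1^*,i_2^*)}H^2(B;R)^{\op 2}\ra0,
\]
split by $(c_1,c_2)\mapsto\pi_1^*c_1+\pi_2^*c_2$, where $\Si$ acts by the swap on $B\wedge B$ and on $H^2(B;R)^{\op 2}$. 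Writing $u=q^*(u')+\pi_1^*c_1+\pi_2^*c_2$ accordingly, the hypothesis $\Si^*u=u$ forces $c_1=c_2=:c$ and $\Si^*u'=u'$ for the cross term $u'\in\widetilde H^2(B\wedge B;\Z)$; since $\pi_i\ci\De=\id_B$ this gives $\De^*u=\bar\De^*u'+2c$, where $\bar\De:=q\ci\De\colon B\ra B\wedge B$ is the reduced diagonal. So it remains to prove that $\bar\De^*u'$ is divisible by~$2$.

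By the homology Künneth theorem and universal coefficients we have $\widetilde H^2(B\wedge B;\Z)\cong\Hom_\Z\bigl(H_1(B;\Z)\ot H_1(B;\Z),\Z\bigr)$, using that $\widetilde H_1(B\wedge B;\Z)=0$ and $\widetilde H_2(B\wedge B;\Z)\cong H_1(B;\Z)\ot H_1(B;\Z)$ as $B$ is connected. The swap on $H_1(B)\ot H_1(B)$ carries the Koszul sign $(-1)^{1\cdot1}=-1$, so $\Si$-invariance of $u'$ means the associated bilinear form $\be'$ is antisymmetric, $\be'=-(\be')^{T}$; and since $\Z$ is torsion-free this upgrades to $\be'(x,x)=0$ for all $x$, i.e.\ $\be'$ is \emph{alternating}. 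By the Bockstein long exact sequence of $0\ra\Z\xrightarrow{2}\Z\ra\Z_2\ra0$, the class $\bar\De^*u'$ is divisible by $2$ in $H^2(B;\Z)$ precisely when $\bar\De^*(u'\bmod 2)=0$ in $H^2(B;\Z_2)$; here $u'\bmod 2$ corresponds to $\be'\bmod 2$, which is still alternating over $\Z_2$ because the reduction $H_1(B;\Z)\ra H_1(B;\Z_2)$ is surjective.

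Finally, universal coefficients over the field $\Z_2$ give $H^2(B;\Z_2)\cong\Hom_{\Z_2}\bigl(H_2(B;\Z_2),\Z_2\bigr)$, so it suffices to show $\bigl\langle u'\bmod 2,\ \bar\De_*a\bigr\rangle=0$ for every $a\in H_2(B;\Z_2)$. The identity $\Si\ci\De=\De$ forces $\Si\ci\bar\De=\bar\De$, hence $\bar\De_*a\in H_1(B;\Z_2)\ot H_1(B;\Z_2)$ is invariant under the swap $\tau$. A $\tau$-invariant tensor over $\Z_2$ is a finite sum of elementary symmetric tensors $v\ot v$ and $v\ot w+w\ot v$ (restrict to the finite-dimensional subspace spanned by its constituents), and an alternating $\Z_2$-valued bilinear form annihilates each of these. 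Hence the pairing vanishes, $\bar\De^*u'$ is divisible by~$2$, and so is $\De^*u=\bar\De^*u'+2c$.

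The step I expect to be the main obstacle is making precise the role of integrality in the second paragraph. The mod-$2$ version of the statement on its own is \emph{false}: for $B=\mathbb{RP}^\iy$ the class $\pi_1^*w\smile\pi_2^*w$ is $\Si$-invariant with $\De^*(\pi_1^*w\smile\pi_2^*w)=w^2\ne0$ in $H^2(\mathbb{RP}^\iy;\Z_2)$, but this class does not lift to integral cohomology. The correct input is the passage from a $\Si$-invariant \emph{integral} class to an antisymmetric, hence \emph{alternating}, form on $H_1$ — where the implication $2x=0\Ra x=0$ over $\Z$ is essential — together with the elementary observation that an alternating $\Z_2$-form kills symmetric tensors. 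The surrounding bookkeeping (checking $\Si$-equivariance of the cofibre and Künneth splittings, tracking Koszul signs) is routine but must be done carefully.
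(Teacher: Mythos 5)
Your proof of the general divisibility statement is correct, and it takes a genuinely different route from the paper's. The paper first reduces to a finite connected CW complex (via the Bockstein sequence and universal coefficients over $\Z_2$), so that the integral K\"unneth formula for $B\t B$ applies and $H^1(B,\Z)$ is free; it then decomposes $u=\pi_1^*a+\pi_1^*b\cup\pi_2^*c+\pi_2^*d$, deduces $a=d$ and antisymmetry of the coefficient matrix $b^ic^j$ from $\Si^*u=u$, and concludes $\De^*u=2a+\sum_{i<j}2b^ic^j\,e_i\cup e_j$ by an explicit basis computation. You instead split $H^2(B\t B;R)$ $\Si$-equivariantly using the cofibration $B\vee B\ra B\t B\ra B\wedge B$, identify the cross term $u'$ with an integral bilinear form $\be'$ on $H_1(B,\Z)$ via K\"unneth and universal coefficients for the smash product (no finiteness hypotheses needed, since $\widetilde H_1(B\wedge B,\Z)=0$ kills the Ext term), and finish by pairing the mod-2 reduction against $\bar\De_*H_2(B,\Z_2)$, whose elements are swap-invariant tensors annihilated by the (alternating) reduced form. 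The mechanism is the same in both arguments: $\Si$-invariance of an \emph{integral} class forces the cross term to be antisymmetric, hence alternating (this is exactly where integrality enters, as your $\mathbb{RP}^\iy$ example nicely isolates -- the paper does not include such an example), so its pullback under $\De$ is even, while the two pullback terms combine to $2a$. What your route buys is the avoidance of the reduction to finite complexes and of any choice of basis, and it treats a general (non-decomposable) cross term transparently, whereas the paper writes the middle K\"unneth summand as a single product $\pi_1^*b\cup\pi_2^*c$ and handles the general case implicitly through the coefficient matrix. The only omission is the theorem's first paragraph: to get the square root of $K_{\oM{}^\top}$ you still need the short deduction the paper gives, namely that $K_\oM\cong\De_\oM^*(L_\oM)$ by \eq{od3eq8} and $\Si^*(L_\oM)\cong L_\oM$ by \eq{od3eq12} for $m$ odd, so $u=c_1(L_{\oM{}^\top})$ satisfies your hypothesis and $c_1(K_{\oM{}^\top})=\De^*u$ is divisible by two; this is routine given your general statement.
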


\begin{proof}
The first Chern class determines an isomorphism between $H^2(B,\Z)$ and the set of isomorphism classes of complex line bundles on $B$ with the tensor product as group operation. Equation \eq{od3eq8} implies that $K_\oM \cong\De_\oM^*(L_\oM)$, where $\De_\oM:\oM\ra \oM\t\oM$ is the diagonal. Also \eq{od3eq12} and $m$ odd yield $\Si^*(L_\oM)\cong L_\oM$, so that $u=c_1(L_{\oM{}^\top})$ satisfies the assumption $\Si^*u=u$. Similarly for $\M$, as $K_\oM\vert_\M=K_\M$, $L_\oM\vert_{\M\t\M}=L_\M$.

From the Bockstein exact sequence $H^2(B,\Z) \xrightarrow{2\cdot} H^2(B,\Z) \ra H^2(B,\Z_2)$, see Dold \cite[\S 7.15]{Dold}, and from $H^2(B,\Z_2)\cong \Hom_{\Z_2}(H_2(B,\Z_2), \Z_2)$ by universal coefficients, we see that we may reduce to the case of a finite connected CW complex $B$. We may therefore suppose that\/ ${H_p(B,\Z)}$ is finitely generated for all\/ ${p\geq 0}$.  As $H^0(B,\Z)$ and $H^1(B,\Z) \cong \Hom(H_1(B),\Z)$ are finitely generated
torsion-free, they are free and the K\"unneth formula reduces to an isomorphism
\begin{equation*}
H^2(B,\Z) \oplus \left[H^1(B,\Z)\ot H^1(B,\Z)\right] \oplus H^2(B,\Z)\longrightarrow
H^2(B\t B,\Z).
\end{equation*}
This means that we may decompose the class $u$ as
\e
\label{od3eq24}
u=\pi_1^*a + \pi_1^*b\cup \pi_2^*c + \pi_2^*d
\e
for $a, d \in H^2(B,\Z)$ and $b,c \in H^1(B,\Z)$. By naturality of `$\cup$'
and ${\Si\circ\pi_1 = \pi_2}$,
\e
\label{od3eq25}
\Si^*u=\pi_2^*a + \pi_2^*b \cup \pi_1^*c + \pi_1^*d.
\e
Pulling back along $\incl(x) = (x,x_0)$ for fixed $x_0 \in B$ we find that $\Si^*u=u$
implies $a=d$.
Hence $\De^*(\pi_1^* a + \pi_2^*d) = a+d=2a$. Choose a basis
${e_1, \ldots, e_n}$ of ${H^1(B,\Z)}$ and expand ${b=\sum_{i=1}^n b^ie_i}$ and
${c=\sum_{j=1}^n c^je_j}$ for $b^i, c^j \in \Z$. Then the equality of \eqref{od3eq24} and
\eqref{od3eq25} means ${b^ic^j = -c^ib^j}$ for all $1\leq i,j\leq n$. Therefore
\begin{equation*}
\De^*u = 2a + \sum_{i,j=1}^n b^ic^j(e_i \cup e_j)
= 2a + \sum_{1\leq i<j\leq n} 2b^i c^j(e_i \cup e_j).\qedhere
\end{equation*}
\end{proof}

The following theorem, one of our main results, will be proved in \S\ref{od5}, using Theorem \ref{od2thm1} and results of Cao, Gross and Joyce~\cite{CGJ}.

\begin{thm}
\label{od3thm2}
Let\/ $X$ be a spin smooth projective\/ $3$-fold. Use the notation\/ $\M,\oM,K_\M,\ldots$ of Definitions\/ {\rm\ref{od3def1}} and\/ {\rm\ref{od3def2}}. Then we can construct canonical spin structures\/ $[K_\M^{1/2}]$ on\/ $\M$ and\/ $[K_\oM^{1/2}]$ on\/ $\oM,$ both compatible with direct sums, with\/~$[K_\oM^{1/2}]\vert_\M=[K_\M^{1/2}]$.
\end{thm}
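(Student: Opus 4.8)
The plan is to deduce Theorem \ref{od3thm2} from Theorem \ref{od2thm1} by comparing the algebraic moduli stacks with the differential-geometric stacks $\B_P$ over the open substack of vector bundles, and then to extend from vector bundles to all of $\coh(X)$ and $D^b\coh(X)$ by the homotopy-theoretic method of Cao, Gross and Joyce \cite{CGJ}. Write $X^\ran$ for the underlying compact complex (hence K\"ahler) $3$-manifold; the algebraic spin structure $J=K_X^{1/2}$ makes $X^\ran$ into a compact spin $6$-manifold, so Theorem \ref{od2thm1} applies with $E_\bu=\slashed{D}_+$ the positive Dirac operator, producing canonical spin structures $\bigl[(K_P^{E_\bu})^{1/2}\bigr]$ on $\B_P$ for all principal $\U(m)$-bundles $P\ra X^\ran$, compatible with direct sums.

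Let $\M_{\rm vect}\subset\M$ be the open substack of algebraic vector bundles of all ranks $\ge0$, so that $\Phi$ restricts to $\Phi\colon\M_{\rm vect}\t\M_{\rm vect}\ra\M_{\rm vect}$, and write $\M_{\rm vect}=\coprod_\ga\M_{\rm vect}^\ga$ for its decomposition over topological types $\ga$, corresponding to isomorphism classes of principal $\U(m)$-bundles $P_\ga\ra X^\ran$. Describing $(\M_{\rm vect}^\ga)^\top$ up to equivalence of topological stacks as $[\cC_\ga/\G^\C_\ga]$, where $\cC_\ga$ is the space of integrable $\db$-operators on a fixed $C^\iy$ complex bundle of type $\ga$ and $\G^\C_\ga$ is the complex gauge group, the inclusion $\cC_\ga\hookra\A^{0,1}_\ga$ into all $(0,1)$-connections, together with the deformation retraction $\G^\C_\ga\simeq\G_{P_\ga}$, gives a chain
\[
(\M_{\rm vect}^\ga)^\top=[\cC_\ga/\G^\C_\ga]\longra[\A^{0,1}_\ga/\G^\C_\ga]\simeq B\G^\C_\ga\simeq B\G_{P_\ga}\simeq\B_{P_\ga},
\]
hence a natural morphism $\Pi_\ga\colon(\M_{\rm vect}^\ga)^\top\ra\B_{P_\ga}$ of topological stacks. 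Since direct sum of holomorphic structures goes to direct sum of Chern connections, the $\Pi_\ga$ are compatible up to homotopy with $\Phi$ and the maps $\Phi_{P_1,P_2}$ of Definition \ref{od2def2}.

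The key step is an index-theoretic identification of line bundles. On the K\"ahler manifold $X^\ran$ one has $\slashed{S}_+\cong K_X^{1/2}\ot\La^{0,\rm even}T^*X$, $\slashed{S}_-\cong K_X^{1/2}\ot\La^{0,\rm odd}T^*X$, and $\slashed{D}_+=\sqrt2(\db+\db^*)$; twisting by $\Ad(P_\ga)\ot_\R\C\cong\bar F\ot_\C F$ and using the Chern connection of an integrable $\db$-operator on $F$, Hodge theory identifies the kernel and cokernel of $\slashed{D}_+^{\nabla_{\Ad(P_\ga)}}$ with $\bigop_{i\text{ even}}H^i(X,\bar F\ot_\C F\ot_\C J)$ and $\bigop_{i\text{ odd}}H^i(X,\bar F\ot_\C F\ot_\C J)$, which by \eq{od3eq9} are the cohomology groups of $\cC^\bu$ at $[F]$. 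This gives a canonical, metric-independent isomorphism $\Pi_\ga^*(K_P^{E_\bu})\cong K_\M\vert_{\M_{\rm vect}^\ga}$, and likewise $(\Pi_{\ga_1}\t\Pi_{\ga_2})^*(L^{E_\bu}_{P_{\ga_1},P_{\ga_2}})\cong L_\M\vert_{\M_{\rm vect}^{\ga_1}\t\M_{\rm vect}^{\ga_2}}$, intertwining the structure isomorphisms $\De_\M,\Si_\M,\phi_\M$ of Definition \ref{od3def1} with \eq{od2eq4}, \eq{od2eq7} and \eq{od2eq11}. Pulling back the canonical spin structures on the $\B_{P_\ga}$ along $\Pi_\ga$ therefore yields a spin structure on $\M_{\rm vect}$ which is compatible with direct sums, by Theorem \ref{od2thm1}(b) and the compatibility of $\Pi$ with $\oplus$, and independent of the choices of fixed $C^\iy$ bundles and metrics, by Theorem \ref{od2thm1}(a).

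It remains to extend from $\M_{\rm vect}$ to $\M$ and then $\oM$. A square root of $K_{\oM^\top}$ exists by Theorem \ref{od3thm1}, and two square roots of a fixed topological line bundle on a space $B$ differ by a class in $H^1(B,\Z_2)$; the content of the extension is that these classes, and the direct-sum isomorphisms \eq{od3eq23}, are detected on $\M_{\rm vect}$. This is carried out exactly as for orientations in \cite{CGJ}: the homotopy type of $\oM^\top$ and $\M^\top$ is controlled by the semi-topological K-theory of $X$ well enough that spin structures on $\M_{\rm vect}$ compatible with direct sums extend uniquely to $\M$ and $\oM$, with $[K_\oM^{1/2}]\vert_\M=[K_\M^{1/2}]$, and are represented by algebraic line bundles by Proposition \ref{od5prop5}. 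I expect the main obstacles to be (i) making the index comparison of the third paragraph precise with the orientation conventions and signs of \S\ref{od2} correctly matched (see Remark \ref{od3rem2}(b)), and (ii) adapting the homotopy-theoretic argument of \cite{CGJ}, which treats $\Z_2$-torsors of orientations in the case $m$ even, to square roots of line bundles in the present case $m$ odd; both are handled in \S\ref{od5}.
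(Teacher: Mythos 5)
Your strategy coincides with the paper's: pull back the canonical spin structures of Theorem \ref{od2thm1} over the locus of vector bundles via the Dolbeault--Dirac identification of determinant line bundles, then extend to $\oM$ by a group-completion argument in the style of \cite{CGJ}, and finally convert topological into algebraic square roots. However, there are two concrete gaps at exactly the points where the paper does its real work. First, you base the comparison on $\M_{\rm vect}$ and the presentation $(\M_{\rm vect}^\ga)^\top\simeq[\cC_\ga/\G^\C_\ga]$, whereas the paper works with the stack $\T$ of algebraic vector bundles equipped with global generating sections. This is not a cosmetic difference: the generating sections supply the Hermitian metric (via $\C^N=F\op\Ker(s_1,\ldots,s_N)$) and hence the Chern connection defining the H-map $\La:\T{}^\top\ra\coprod_{[P]}\B_P$, and, crucially, \cite[Prop.~3.22]{CGJ} shows that $\De^\top:\T{}^\top\ra\oM{}^\top$ is a homotopy-theoretic group completion. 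Your proposal neither justifies the identification of the topological realization of the algebraic stack $\M_{\rm vect}$ with the analytic quotient $[\cC_\ga/\G^\C_\ga]$, nor gives any argument that $(\M_{\rm vect})^\top\ra\oM{}^\top$ is a group completion; without the latter the ``extend uniquely'' step has no engine.

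Second, the extension step itself is not, as you suggest, just the statement that square roots form an $H^1(\cdot,\Z_2)$-torsor detected on the vector-bundle locus, nor is it ``exactly as for orientations in \cite{CGJ}'': orientations are $\Z_2$-torsors, while here one must transport square roots of a line bundle \emph{together with} the direct-sum compatibility $\xi$ squaring to $\phi$. The paper has to build new machinery for this: compatible square roots over H-spaces, a secondary obstruction in Hochschild--Pontryagin cohomology $HP^{2,1}$ with torsor group $HP^{1,1}$ (Proposition \ref{od5prop2}), invariance of these groups under homotopy-theoretic group completion (Propositions \ref{od5prop3}--\ref{od5prop4}), and the comparison of algebraic with topological spin structures via Blanc's topological realization and Artin comparison (Proposition \ref{od5prop5}). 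Your index-theoretic identification of $K_P^{E_\bu}$ and $L^{E_\bu}_{P_1,P_2}$ with $K_\M,L_\M$ over the bundle locus matches the paper's use of the Dolbeault resolution, but the uniqueness-of-extension claim, which is the heart of the theorem, is asserted rather than proved, and the substitute base $\M_{\rm vect}$ does not plug into the group-completion mechanism that makes it true.
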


A very rough idea of the proof is that we first pull back the spin structures on $\B_P$ for all $P\ra X$ in Theorem \ref{od2thm1} to spin structures on the open substack $\M_{\rm vect}\subset\M\subset\oM$ of algebraic vector bundles in $\M$, by mapping a rank $m$ algebraic vector bundle $F\ra X$ to its underlying holomorphic vector bundle $F^\ran\ra X^\ran$, for $F^\ran$ the associated vector bundle of a $\U(m)$-bundle~$P^\ran\ra X^\ran$.

Then we show that spin structures on $\M_{\rm vect}$ compatible with direct sums extend uniquely to spin structures on $\oM$ compatible with direct sums. This holds because, in a homotopy-theoretic sense, we can think of $(\M_{\rm vect},\Phi_{\rm vect})$ as a commutative monoid in stacks, and $(\oM,\bar\Phi)$ as its abelian group completion, and we use a universal property of group completions.

\subsection{Strong spin structures}
\label{od33}

Here is a categorification of the notion of spin structure in Definition \ref{od3def2}.

\begin{dfn}
\label{od3def3}
Let $X$ be a spin smooth projective $m$-fold for $m$ odd, and use the notation of Definitions \ref{od3def1} and \ref{od3def2}. We define a {\it strong spin structure on\/} $\M$ to be a square root $K_\M^{1/2}$ for $K_\M$ on~$\M$. 

We define a {\it strong spin structure on\/ $\M$ compatible with direct sums\/} to be a pair $(K_\M^{1/2},\xi_\M)$ of a square root $K_\M^{1/2}$ and an isomorphism $\xi_\M$ in \eq{od3eq23} with $\xi_\M\ot\xi_\M=\phi_\M$, such that the following diagram on $\M\t\M\t\M$ commutes:
\e
\begin{gathered}
\xymatrix@!0@C=92pt@R=54pt{
*+[r]{\begin{subarray}{l} \ts \pi_1^*(K_\M^{1/2})\!\ot\!\pi_2^*(K_\M^{1/2})\!\ot\!\pi_3^*(K_\M^{1/2})\ot \\
\ts \pi_{12}^*(L_\M)\!\ot\!  \pi_{13}^*(L_\M)\!\ot\!\pi_{23}^*(L_\M)\end{subarray}}
\ar[rrr]_(0.59){\begin{subarray}{l} \quad\pi_{12}^*(\xi_\M)\ot \\  \id_{\pi_3^*(K_\M^{1/2})}\ot\chi_\M\end{subarray}} \ar@<-10pt>[d]^{\id_{\pi_1^*(K_\M^{1/2})}\ot \pi_{23}^*(\xi_\M)\ot \psi_\M}
&&& *+[l]{\begin{subarray}{l} \ts (\Phi\!\t\!\id_\M)^* \bigl(\pi_1^*(K_\M^{1/2}) \\ \ts \quad\;\>{}\ot\pi_2^*(K_\M^{1/2})\!\ot\! L_\M\bigr)\end{subarray}} \ar@<10pt>[d]_{(\Phi\t\id_\M)^*(\xi_\M)}
\\
*+[r]{\begin{subarray}{l} \ts (\id_\M\!\t\!\Phi)^*\bigl(\pi_1^*(K_\M^{1/2}) \\ \ts{}\ot\pi_2^*(K_\M^{1/2})\!\ot\! L_\M\bigr)\end{subarray}} \ar[rr]^(0.63){(\id_\M\t\Phi)^*(\xi_\M)} && {\begin{subarray}{l} \ts (\id_\M\!\t\!\Phi)^*\\
\ts{}\ci\Phi^*(K_\M^{1/2})\end{subarray}} \ar@{=}[r] &  *+[l]{\begin{subarray}{l} \ts (\Phi\!\t\!\id_\M)^*\\
\ts{}\ci\Phi^*(K_\M^{1/2}).\!\end{subarray}}
}\!\!\!
\end{gathered}
\label{od3eq27}
\e
Equation \eq{od3eq27} is a `square root' of \eq{od3eq19}, and a lift to line bundles of~\eq{od3eq20}.

If $(K_\M^{1/2},\xi_\M),(\dot K_\M^{1/2},\dot\xi_\M)$ are strong spin structures on $\M$ compatible with direct sums, an {\it isomorphism\/} $\ka:(K_\M^{1/2},\xi_\M)\ra(\dot K_\M^{1/2},\dot\xi_\M)$ is an isomorphism of line bundles $\ka:K_\M^{1/2}\ra \dot K_\M^{1/2}$ such that the following commute:
\e
\begin{gathered}
\xymatrix@C=46pt@R=15pt{
*+[r]{K_\M^{1/2}\ot K_\M^{1/2}} \ar[d]^{\ka\ot\ka} \ar[r]_(0.68)\jmath & *+[l]{K_\M} \ar@{=}[d] 
\\ 
*+[r]{\dot K_\M^{1/2}\ot\dot K_\M^{1/2}} \ar[r]^(0.68)j & *+[l]{K_\M,\!} 
}\;\>
\xymatrix@C=88pt@R=15pt{ 
*+[r]{\pi_1^*(K_\M^{1/2})\ot\pi_2^*(K_\M^{1/2})\ot L_\M} \ar[d]^{\pi_1^*(\ka)\ot\pi_2^*(\ka)\ot \id_{L_\M}} \ar[r]_(0.72){\xi_\M} & *+[l]{\Phi^*(K_\M^{1/2})} \ar@{=}[d] 
\\ 
*+[r]{\pi_1^*(\dot K_\M^{1/2})\ot\pi_2^*(\dot K_\M^{1/2})\ot L_\M}  \ar[r]^(0.72){\dot\xi_\M} & *+[l]{\Phi^*(K_\M^{1/2}).\!} 
}\!\!\!
\end{gathered}
\label{od3eq28}
\e

We make the analogous definitions of {\it strong spin structure on\/} $\oM$ ({\it compatible with direct sums\/}) by replacing $\M,\Phi,K_\M,\ldots,\xi_\M$ by $\oM,\bar\Phi,K_\oM,\ldots,\xi_\oM$.
\end{dfn}

\begin{rem}
\label{od3rem3}
We can also define `strong spin structure compatible with direct sums' in the differential-geometric setting of \S\ref{od2}. It is probably best to start by choosing one principal $\U(m)$-bundle $P\ra X$ representing each isomorphism class $[P]$ of $\U(m)$-bundles, and then choosing square roots $(K^{E_\bu}_P)^{1/2}$ for each such representative $P$, and morphisms $\xi^{E_\bu}_{P_1,P_2}$ in \eq{od2eq12} for each pair $P_1,P_2$, replacing $P_1\op P_2$ by the chosen representative in its isomorphism class $[P_1\op P_2]$. In the analogue of Theorem \ref{od3thm3} below, which needs a different proof, one should replace $K_0^\semi(X)$ by complex K-theory $K^0(X)$ in the differential-geometric case.	
\end{rem}

We will explain in Remark \ref{od4rem5}(d) that equation \eq{od3eq27} will be needed to ensure Cohomological Hall Algebras of Calabi--Yau 3-folds are associative.

In Theorem \ref{od3thm3} we give criteria for when a spin structure $[K_\oM^{1/2}]$ on $\oM$ compatible with direct sums can be lifted to a strong spin structure $(K_\oM^{1/2},\xi_\oM)$ compatible with direct sums. We will need the following definition:

\begin{dfn}
\label{od3def4}
Let $X$ be a smooth projective $\C$-scheme. We will explain a definition of the {\it semi-topological K-theory group\/} $K_0^\semi(X)$ of Friedlander and Walker \cite{FrWa}. This is also known as the {\it holomorphic K theory group\/} $K^0_{\rm hol}(X)$, as in Lawson et al.\ \cite{LLM} and Cohen and Lima-Filho \cite{CoLF}.

As in Remark \ref{od3rem1}(h) write $\oM$ for the moduli stack of objects in $D^b\coh(X)$, as a higher $\C$-stack. Define $K_0^\semi(X)$ to be the set $\pi_0(\oM)$ of connected components of $\oM$. For each $F^\bu\in D^b\coh(X)$, write $\lb F^\bu\rb\in K_0^\semi(X)$ for the connected component of $\pi_0(\oM)$ containing the $\C$-point $[F^\bu]$. Then $K_0^\semi(X)=\bigl\{\lb F^\bu\rb:F^\bu\in D^b\coh(X)\bigr\}$. We make $K_0^\semi(X)$ into a commutative ring by 
\begin{gather*}
\lb F^\bu\rb+\lb G^\bu\rb=\lb F^\bu\op G^\bu\rb, \qquad \lb F^\bu\rb\cdot\lb G^\bu\rb=\lb F^\bu\ot G^\bu\rb, \\
-\lb F^\bu\rb=\lb F^\bu[1]\rb, \qquad 0=\lb 0\rb \qquad\text{and}\qquad 1=\lb\O_X\rb.
\end{gather*}
\end{dfn}

The next theorem will be proved in~\S\ref{od6}.

\begin{thm}
\label{od3thm3}
Let\/ $X$ be a spin smooth projective\/ $m$-fold with\/ $m$ odd, and\/ $[K_\oM^{1/2}]$ be a spin structure on\/ $\oM$ compatible with direct sums, as in Definition\/ {\rm\ref{od3def2}}. Write\/ $H^*\bigl(K_0^\semi(X),\Z_2\bigr)$ for the group cohomology of\/ $K_0^\semi(X)$ over $\Z_2,$ as in Brown {\rm\cite{Brow},} regarding\/ $K_0^\semi(X)$ as an abelian group under addition. Then:
\begin{itemize}
\setlength{\itemsep}{0pt}
\setlength{\parsep}{0pt}
\item[{\bf(a)}] We may define an obstruction class\/ $\Om\bigl([K_\oM^{1/2}]\bigr)$ in\/ $H^3\bigl(K_0^\semi(X),\Z_2\bigr),$ such that\/ $[K_\oM^{1/2}]$ may be extended to a strong spin structure\/ $(K_\oM^{1/2},\xi_\oM)$ compatible with direct sums, as in Definition\/ {\rm\ref{od3def3},} if and only if\/~$\Om\bigl([K_\oM^{1/2}]\bigr)=0$.
\item[{\bf(b)}] When\/ $\Om\bigl([K_\oM^{1/2}]\bigr)=0,$ the isomorphism classes of extensions\/ $(K_\oM^{1/2},\xi_\oM)$ in {\bf(a)} are a torsor over\/ $H^2\bigl(K_0^\semi(X),\Z_2\bigr)$.
\item[{\bf(c)}] Extensions\/ $(K_\oM^{1/2},\xi_\oM)$ have automorphism group\/~$H^1\bigl(K_0^\semi(X),\Z_2\bigr)$.
\end{itemize}
\end{thm}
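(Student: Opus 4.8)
The plan is to recognize that the data in question --- a spin structure $[K_\oM^{1/2}]$ compatible with direct sums on $\oM$ --- is essentially the structure of a ``graded line bundle on the classifying space of the commutative monoid $\pi_0(\oM)=K_0^\semi(X)$'' twisted by the quadratic form $L_\oM$, and that upgrading it to a \emph{strong} spin structure amounts to choosing the associator data coherently. Concretely, since all the geometry ($K_\oM$, $L_\oM$, $\phi_\oM$, $\chi_\oM$, $\psi_\oM$) lives over the topological stack $\oM^\top$, and since by Theorem \ref{od3thm1} a square root $K_\oM^{1/2}$ already exists, I would first \emph{fix} one such representative $K_\oM^{1/2}$ and a representative $\xi_\oM$ of the direct-sum compatibility (which exists by hypothesis, being part of the definition of ``compatible with direct sums''). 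The pentagon-type diagram \eq{od3eq27} need not commute for an arbitrary such $\xi_\oM$: the two composites differ by a locally constant $\Z_2$-valued function on $\M\t\M\t\M$ (any automorphism of a line bundle on a connected stack is a nonzero scalar, and squaring \eq{od3eq27} gives \eq{od3eq19} which \emph{does} commute, forcing the discrepancy to be $\pm 1$), hence by a function $\omega$ on $\pi_0(\oM)^3=K_0^\semi(X)^3$. The first key step is to check that $\omega$ is a group $3$-cocycle: this follows from the usual Mac Lane coherence / pentagon argument applied to the $4$-fold product $\oM^{\t 4}$, where the associativity of $\bar\Phi$ and the various compatibilities between $\chi_\oM,\psi_\oM$ force the coboundary $\delta\omega$ to vanish. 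Define $\Om\bigl([K_\oM^{1/2}]\bigr)=[\omega]\in H^3\bigl(K_0^\semi(X),\Z_2\bigr)$.

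Next I would check that this class is well defined, i.e.\ independent of the choices of representative $K_\oM^{1/2}$ and of $\xi_\oM$. Changing $K_\oM^{1/2}$ within its isomorphism class does nothing (transport the data along the isomorphism). Changing $\xi_\oM$ to $\xi_\oM'$: since both are square roots of $\phi_\oM$, they differ by a locally constant $\Z_2$-valued function on $\M\t\M$, hence by a function $\eta$ on $K_0^\semi(X)^2$; and the compatibility of $\xi_\oM$ with $\xi_\oM\ot\xi_\oM=\phi_\oM$ combined with the fact that $\phi_\oM$ itself satisfies \eq{od3eq19} forces $\eta$ to be \emph{symmetric}, i.e.\ a symmetric $2$-cochain. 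Feeding $\xi_\oM'=\eta\cdot\xi_\oM$ into \eq{od3eq27} changes $\omega$ by $\delta\eta$ --- here one must be a little careful because $\eta$ appears in three of the four arrows; the net effect is exactly the group-cohomology coboundary of $\eta$ (using symmetry of $\eta$ to match the two sides of the pentagon). Hence $[\omega]$ is well defined, and $\Om\bigl([K_\oM^{1/2}]\bigr)=0$ iff some choice of $\xi_\oM$ makes \eq{od3eq27} commute, i.e.\ iff a strong spin structure extending $[K_\oma^{1/2}]$ exists; this proves (a). For (b), given $\Om=0$, fix one strong spin structure $(K_\oM^{1/2},\xi_\oM)$; any other has the same underlying $K_\oM^{1/2}$ (up to iso) and differs by $\xi_\oM'=\eta\cdot\xi_\oM$ with $\eta$ a symmetric $2$-cochain, and commutativity of \eq{od3eq27} for both forces $\delta\eta=0$, so $\eta$ is a $2$-cocycle; two such give isomorphic strong spin structures iff they differ by a coboundary $\delta\zeta$ of a $1$-cochain $\zeta$ (realized by the isomorphism $\ka=\zeta\cdot\id$ in \eq{od3eq28}). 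Thus the set of isomorphism classes of extensions is a torsor over $H^2\bigl(K_0^\semi(X),\Z_2\bigr)$. For (c), an automorphism of $(K_\oM^{1/2},\xi_\oM)$ is a locally constant $\Z_2$-valued function $\ka$ on $\M$ --- a $1$-cochain $\zeta$ on $K_0^\semi(X)$ --- and the second diagram in \eq{od3eq28} forces $\delta\zeta=0$, so the automorphism group is $H^1\bigl(K_0^\semi(X),\Z_2\bigr)$.

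The main obstacle will be carefully identifying locally constant functions on the relevant products $\M,\M\t\M,\M\t\M\t\M$ with group (co)chains on $K_0^\semi(X)$, and tracking the precise way the group-cohomology differential emerges from the pentagon \eq{od3eq27}. Two points need genuine care. First, one must verify that $\pi_0(\oM^{\t n})=\pi_0(\oM)^n=K_0^\semi(X)^n$ and that the maps $\bar\Phi$, $\pi_{ij}$, etc.\ induce on $\pi_0$ exactly addition and the coordinate projections --- this is where the monoid structure of $K_0^\semi(X)$ (Definition \ref{od3def4}) enters, and where one uses that $\oM$ is group-like so that ``compatible with direct sums'' genuinely refers to the \emph{group} $K_0^\semi(X)$ rather than merely a monoid. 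Second, because the isomorphism \eq{od3eq22} depends on a sign convention that is \emph{not} symmetric (Remark \ref{od3rem2}(b)), one must check that these Koszul signs, which enter $\phi_\oM$, $\chi_\oM$, $\psi_\oM$, organize themselves into an honest (possibly nontrivial but fixed) symmetric bilinear correction and do \emph{not} spoil the cocycle condition on $\omega$ --- equivalently, that $L_\oM$ together with its sign-twisted tensor structure defines a well-defined ``quadratic refinement'' datum whose associativity failure is precisely the $3$-cocycle $\omega$. Once these bookkeeping issues are settled, (a)--(c) are the standard statement that obstruction, choices, and automorphisms for lifting an ``$H^2$-graded'' structure to an ``$H^{\le 3}$-coherent'' one over $BK_0^\semi(X)$ live in $H^3$, $H^2$, $H^1$ respectively.

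Alternatively, and perhaps more cleanly, one can phrase the whole argument in terms of Picard groupoids: the assignment $n\mapsto$ (Picard groupoid of line bundles on $\oM^{\t n}$ with the $L_\oM$-twists) assembles, after taking $\pi_0$, into a ``twisted commutative monoid'' structure on the Picard groupoid $\mathrm{Pic}$, and the obstruction to strictifying it to a symmetric monoidal functor $K_0^\semi(X)\to\mathrm{Pic}$ lands in $H^3$ of the source with coefficients in $\pi_1\mathrm{Pic}=\Z_2$ by Eilenberg--Mac Lane / Deligne's theory of Picard stacks; I would mention this as the conceptual underpinning but carry out the explicit cocycle computation above since it is self-contained and gives all three parts at once.
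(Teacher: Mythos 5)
Your approach is essentially the same as the paper's (see \S\ref{od6}): fix a representative $K_\oM^{1/2}$ and one $\check\xi_\oM$ with $\check\xi_\oM\ot\check\xi_\oM=\phi_\oM$, measure the failure of the pentagon \eq{od3eq27} by a function $\ze\colon K_0^\semi(X)^3\to\Z_2$, show $\d\ze=0$ via a diagram on $\oM^{\t 4}$, observe that replacing $\check\xi_\oM$ by $\ep\cdot\check\xi_\oM$ changes $\ze$ by $\d\ep$, and define $\Om=[\ze]$; parts (b) and (c) then drop out of the same cochain description.

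One step in your sketch is wrong, though it happens not to propagate. You assert that the compatibility $\xi_\oM\ot\xi_\oM=\phi_\oM$ together with the commutativity of \eq{od3eq19} forces the ``difference'' $\eta$ between two choices of $\xi_\oM$ to be a \emph{symmetric} $2$-cochain. This is false: squaring only gives $\eta^2=1$, i.e.\ $\eta$ is $\Z_2$-valued, and Definition~\ref{od3def3} imposes only the associativity square \eq{od3eq27}, never any constraint involving the swap $\Si_\oM$, so nothing forces symmetry. The paper's Lemma in \S\ref{od6} correspondingly allows $\ep$ to be an arbitrary element of $C^2(K_0^\semi(X),\Z_2)$. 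Had you carried the spurious symmetry constraint through consistently, part~(b) would have yielded $\Ext^1(K_0^\semi(X),\Z_2)$ rather than $H^2(K_0^\semi(X),\Z_2)$; fortunately the coboundary computation you invoke works without symmetry, since $\xi_\oM$ appears in all four (not three of four) arrows of \eq{od3eq27}, and the product of the four induced sign changes is precisely $(-1)^{\d\eta(\al,\be,\ga)}$ with no symmetry needed. Correcting this makes your argument agree with the paper's proof.
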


Unfortunately requiring $H^3\bigl(K_0^\semi(X),\Z_2\bigr)=0$ is a very restrictive condition on $K_0^\semi(X)$, so the theorem does not provide useful conditions for $(K_\oM^{1/2},\xi_\oM)$ to exist. For example, $H^3(\Z^r,\Z_2)=\Z_2^{\binom{r}{3}}$, so if $K_0^\semi(X)\cong\Z^r\op\text{torsion}$ for $r\ge 3$ then $H^3\bigl(K_0^\semi(X),\Z_2\bigr)\ne 0$. The next question seems interesting:

\begin{quest}
\label{od3quest1} For a spin smooth projective\/ $3$-fold\/ $X,$ can\/ $[K_\M^{1/2}],[K_\oM^{1/2}]$ in Theorem\/ {\rm\ref{od3thm2}} be lifted to strong spin structures\/ $(K_\M^{1/2},\xi_\M),(K_\oM^{1/2},\xi_\oM)$ compatible with direct sums? Can this be done naturally up to \textup(canonical\textup) isomorphism?
\end{quest}

One can also ask the analogous question in the differential-geometric setting, as in Remark~\ref{od3rem3}.

\section{Calabi--Yau manifolds and orientation data}
\label{od4}

A very brief summary of this section is as follows:
\begin{itemize}
\setlength{\itemsep}{0pt}
\setlength{\parsep}{0pt}
\item[(a)] Kontsevich and Soibelman \cite[\S 5]{KoSo1} introduced a notion of {\it orientation data\/} for a Calabi--Yau $m$-fold $X$ when $m$ is odd. \item[(b)] Their orientation data is slightly weaker than our notion of `spin structure $[K_\M^{1/2}]$ on $\M$ compatible with direct sums' in Definition \ref{od3def2}, in the case when $K_\M\cong\O_X$ (as $X$ is Calabi--Yau) and $J\cong\O_X$.
\item[(c)] Thus, Theorem \ref{od3thm2} constructs canonical orientation data for every Calabi--Yau 3-fold, proving a well-known conjecture in Donaldson--Thomas theory.
\item[(d)] Orientation data is essential for various generalizations of Donaldson--Thomas theory of Calabi--Yau 3-folds. For example, (c) implies motivic Donaldson--Thomas invariants of Calabi--Yau 3-folds are well defined.
\item[(e)] {\it Strong orientation data}, meaning `strong spin structures $(K_\M^{1/2},\xi_\M)$ on $\M$ compatible with direct sums' from \S\ref{od33} in the Calabi--Yau 3-fold case, will also be important for some future applications.
\end{itemize}

Section \ref{od41} introduces orientation data, and proves our main result, Theorem \ref{od4thm1}, on existence of canonical orientation data for compact Calabi--Yau 3-folds. Section \ref{od42} extends this to noncompact Calabi--Yau 3-folds. Section \ref{od43} discusses the r\^ole of (strong) orientation data in several current and future generalizations of Donaldson--Thomas theory for Calabi--Yau 3-folds.

\subsection{\texorpdfstring{Orientation data for Calabi--Yau $m$-folds, $m$ odd}{Orientation data for Calabi--Yau m-folds, m odd}}
\label{od41}

\begin{dfn}
\label{od4def1}
A {\it Calabi--Yau\/ $m$-fold\/} is a smooth projective $\C$-scheme $X$ of complex dimension $m$ with a section $\th\in H^0(K_X)$ of the canonical bundle $K_X\ra X$ inducing an isomorphism~$\th:\O_X\ra K_X$.

Unless we explicitly say otherwise, Calabi--Yau $m$-folds $X$ are assumed to be projective, so their complex manifolds $X^\ran$ are compact. But in \S\ref{od42} we will discuss {\it noncompact Calabi--Yau\/ $m$-folds}, or {\it local Calabi--Yau\/ $m$-folds}, in which $X$ is quasi-projective rather than projective.

A Calabi--Yau $m$-fold has a natural spin structure $(\O_X,\jmath)$ in Definition \ref{od3def1}, in which $J=\O_X$ and $\jmath$ is the composition $\O_X^{\ot^2}\,{\buildrel\text{multiply}\over\longra}\,\O_X\,{\buildrel\th\over\longra}\,K_X$. We call this the {\it trivial spin structure}. Then in Definition \ref{od3def1}, the terms in $J$ in \eq{od3eq5}--\eq{od3eq14}  can all be omitted, as tensoring by $\O_X$ or $\pi_1^*(\O_X)$ is the identity. In \eq{od3eq9} we have $h^i(\cD^\bu\vert_{([F_1],[F_2])})\cong \Ext^i(F_1,F_2)$, and $\cD^\bu$ is the {\it Ext complex\/} on~$\M\t\M$.
\end{dfn}

We define orientation data, following Kontsevich and Soibelman~\cite[\S 5]{KoSo1}.

\begin{dfn}
\label{od4def2}
Let $X$ be a Calabi--Yau $m$-fold with $m$ odd, let $\M,\cU$ be as in Remark \ref{od3rem1}(e), and use the notation $\cD^\bu,K_\M,L_\M,\ldots$ of Definition \ref{od3def1}. Write $\cExact$ for the moduli stack of exact sequences $F_\bu=\bigl(0\ra F_1\ra F_2\ra F_3\ra 0\bigr)$ in $\coh(X)$, as an Artin $\C$-stack. Write $\pi_i:\cExact\ra\M$ for the morphism in $\Ho(\Art_\C)$ mapping $F_\bu$ to $F_i$ for $i=1,2,3$. Then we have a tautological exact sequence on $X\t\cExact$:
\e
\xymatrix@C=15pt{ 0 \ar[r] & (\id_X\t\pi_1)^*(\cU) \ar[r] & (\id_X\t\pi_2)^*(\cU) \ar[r] & (\id_X\t\pi_3)^*(\cU) \ar[r] & 0. }
\label{od4eq1}
\e
Taking $\Hom$ from this exact sequence to itself gives a diagram of perfect complexes on $X\t\cExact$, with rows and columns distinguished triangles:
\ea
\nonumber\\[-23pt]
\xymatrix@C=16pt@R=15pt{ &  \ar[d] &   \ar[d] &   \ar[d] \\
\ar[r] & {\begin{subarray}{l}\ts (\id_X\t\pi_3)^*(\cU)^\vee \\ \ts \ot(\id_X\t\pi_1)^*(\cU)\end{subarray}} \ar[r] \ar[d] & {\begin{subarray}{l}\ts (\id_X\t\pi_3)^*(\cU)^\vee \\ \ts \ot(\id_X\t\pi_2)^*(\cU)\end{subarray}} \ar[r] \ar[d] & {\begin{subarray}{l}\ts (\id_X\t\pi_3)^*(\cU)^\vee \\ \ts \ot(\id_X\t\pi_3)^*(\cU)\end{subarray}} \ar[d] \ar[r]^(0.8){[1]} &  \\
 \ar[r] & {\begin{subarray}{l}\ts (\id_X\t\pi_2)^*(\cU)^\vee \\ \ts \ot(\id_X\t\pi_1)^*(\cU)\end{subarray}} \ar[r] \ar[d] & {\begin{subarray}{l}\ts (\id_X\t\pi_2)^*(\cU)^\vee \\ \ts \ot(\id_X\t\pi_2)^*(\cU)\end{subarray}} \ar[r] \ar[d] & {\begin{subarray}{l}\ts (\id_X\t\pi_2)^*(\cU)^\vee \\ \ts \ot(\id_X\t\pi_3)^*(\cU)\end{subarray}} \ar[d] \ar[r]^(0.8){[1]} &  \\
\ar[r] & {\begin{subarray}{l}\ts (\id_X\t\pi_1)^*(\cU)^\vee \\ \ts \ot(\id_X\t\pi_1)^*(\cU)\end{subarray}} \ar[r] \ar[d]^(0.6){[1]} & {\begin{subarray}{l}\ts (\id_X\t\pi_1)^*(\cU)^\vee \\ \ts \ot(\id_X\t\pi_2)^*(\cU)\end{subarray}} \ar[r] \ar[d]^(0.6){[1]} & {\begin{subarray}{l}\ts (\id_X\t\pi_1)^*(\cU)^\vee \\ \ts \ot(\id_X\t\pi_3)^*(\cU)\end{subarray}} \ar[d]^(0.6){[1]} \ar[r]^(0.8){[1]} &  \\
&&& }
\nonumber\\[-23pt]
\label{od4eq2}
\ea

Pushing \eq{od4eq2} down along the projection $X\t\cExact\ra\cExact$ and using \eq{od3eq6} with $J=\O_X$ gives a diagram of perfect complexes on $\cExact$, with rows and columns distinguished triangles:
\ea
\nonumber\\[-23pt]
\xymatrix@C=16pt@R=15pt{ &  \ar[d] &   \ar[d] &   \ar[d] \\
\ar[r] & (\pi_3\t\pi_1)^*(\cD^\bu) \ar[r] \ar[d] & (\pi_3\t\pi_2)^*(\cD^\bu) \ar[r] \ar[d] & (\pi_3\t\pi_3)^*(\cD^\bu) \ar[d] \ar[r]^(0.8){[1]} &  \\
 \ar[r] & (\pi_2\t\pi_1)^*(\cD^\bu) \ar[r] \ar[d] & (\pi_2\t\pi_2)^*(\cD^\bu) \ar[r] \ar[d] & (\pi_2\t\pi_3)^*(\cD^\bu) \ar[d] \ar[r]^(0.8){[1]} &  \\
\ar[r] & (\pi_1\t\pi_1)^*(\cD^\bu) \ar[r] \ar[d]^(0.6){[1]} & (\pi_1\t\pi_2)^*(\cD^\bu) \ar[r] \ar[d]^(0.6){[1]} & (\pi_1\t\pi_3)^*(\cD^\bu) \ar[d]^(0.6){[1]} \ar[r]^(0.8){[1]} &  \\
&&& }
\nonumber\\[-23pt]
\label{od4eq3}
\ea
Generalizing Remark \ref{od3rem2}(b), if $\ra\cE^\bu\ra\cF^\bu\ra\cG^\bu\,{\buildrel[1]\over\longra}$ is a distinguished triangle then $\det\cF^\bu\cong\det\cE^\bu\ot\det\cG^\bu$. Thus equation \eq{od4eq3} gives an isomorphism of line bundles on $\cExact$:
\e
\begin{split}
(\pi_2\t\pi_2)^*(L_\M)\cong (\pi_1\t\pi_1)^*(L_\M)\ot(\pi_3\t\pi_3)^*(L_\M)&\\
{}\ot(\pi_1\t\pi_3)^*(L_\M)\ot(\pi_3\t\pi_1)^*(L_\M)&.
\end{split}
\label{od4eq4}
\e

Now $(\pi_i\t\pi_i)^*(L_\M)=\pi_i^*\ci\De_\M^*(L_\M)\cong\pi_i^*(K_\M)$ by \eq{od3eq8}. Also $(\pi_3\t\pi_1)^*(L_\M)=(\pi_1\t\pi_3)^*\ci\Si_\M^*(L_\M)\cong (\pi_1\t\pi_3)^*(L_\M)$ by \eq{od3eq12} and $m$ odd. Combining these with \eq{od4eq4} gives an isomorphism on~$\cExact$:
\e
\ka_\M:\pi_1^*(K_\M)\ot\pi_3^*(K_\M)
\ot(\pi_1\t\pi_3)^*(L_\M)^{\ot^2}\longra\pi_2^*(K_\M).
\label{od4eq5}
\e

We define {\it orientation data for\/} $\M$ to be an isomorphism class $[K_\M^{1/2}]$ of square root line bundles $K_\M^{1/2}$ for $K_\M$ (that is, a spin structure in Definition \ref{od3def2}) satisfying the additional condition \cite[Def.~15]{KoSo2} that choosing a representative $K_\M^{1/2}$ for the isomorphism class $[K_\M^{1/2}]$, there exists an isomorphism
\e
\la_\M:\pi_1^*(K_\M^{1/2})\ot\pi_3^*(K_\M^{1/2})\ot(\pi_1\t\pi_3)^*(L_\M)\longra\pi_2^*(K_\M^{1/2})
\label{od4eq6}
\e
on $\cExact$ with $\la_\M\ot\la_\M=\ka_\M$. Note that \eq{od4eq6} is analogous to~\eq{od3eq23}.

This is not quite the same as Kontsevich and Soibelman's definition of orientation data \cite[\S 5]{KoSo1}. As they are aiming at motivic invariants, they allow their square roots $K_\M^{1/2}$ to be {\it constructible line bundles}. Basically this means that they take a locally finite stratification $\M=\coprod_{i\in I}\M_i$ with $\M_i\subset\M$ a locally closed $\C$-substack,  and choose square roots $K_\M\vert_{\M_i}^{1/2}$ for each $i\in I$, but these need not glue continuously on the transitions between $\M_i$ and~$\M_j$. 

We define {\it constructible orientation data\/} $[K_\M^{1/2}]$ to be as above, but taking $K_\M^{1/2}$ to be a constructible line bundle rather than an ordinary line bundle, so that $K_\M^{1/2}\ot K_\M^{1/2}\cong K_\M$ and \eq{od4eq6} hold in constructible isomorphisms of constructible line bundles. Then Kontsevich and Soibelman's orientation data \cite[\S 5]{KoSo1} is our constructible orientation data. 

There is an obvious map from orientation data $[K_\M^{1/2}]$ to constructible orientation data, by regarding the line bundle $K_\M^{1/2}$ as a constructible line bundle.

We can also extend all the above to define ({\it constructible\/}) {\it orientation data\/} on moduli spaces $\oM$ of objects in $D^b\coh(X)$ from Remark \ref{od3rem1}(h). We replace $\M,\cU,\cD^\bu,K_\M,\ldots$ by $\oM,\cU^\bu,\bar\cD^\bu,K_\oM,\ldots,$ and we replace $\cExact$ by the moduli stack $\cDist$ of distinguished triangles $F_\bu=\bigl(\ra F_1^\bu\ra F_2^\bu\ra F_3^\bu\,{\buildrel[1]\over\longra}\bigr)$ in $D^b\coh(X)$, as a higher $\C$-stack. Then $\cExact\subset\cDist$ is an open substack. We replace $\pi_i$ by the projection $\pi_i:\cDist\ra\oM$ mapping $F_\bu$ to $F_i^\bu$ for $i=1,2,3$. Equation \eq{od4eq1} becomes a distinguished triangle on $X\t\cDist$ rather than an exact sequence, but the rest is essentially the same.
\end{dfn}

In the next proposition, the point is that the conditions on $[K_\M^{1/2}]$ of existence of $\xi_\M$ in \eq{od3eq23} involving direct sums in $\coh(X)$, and of existence of $\la_\M$ in \eq{od4eq6} involving short exact sequences in $\coh(X)$, are equivalent.

\begin{prop}
\label{od4prop1}
Let\/ $X$ be a Calabi--Yau\/ $m$-fold for\/ $m$ odd, with the trivial spin structure\/ $(\O_X,\jmath),$ and let\/ $\M,\oM$ be as in Remark\/ {\rm\ref{od3rem1}(e),(h)}. Then spin structures on\/ $\M$ \textup(or\/ $\oM$\textup) compatible with direct sums in Definition\/ {\rm\ref{od3def2}} are the same as orientation data on\/ $\M$ \textup(or\/ $\oM$\textup) in Definition\/~{\rm\ref{od4def2}}.	
\end{prop}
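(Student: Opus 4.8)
The plan is to fix once and for all a representative square root $K_\M^{1/2}$ of the given spin structure (and likewise $K_\oM^{1/2}$); since both notions being compared depend only on its isomorphism class, this loses nothing. I would then show directly that the compatibility isomorphism $\xi_\M$ of \eqref{od3eq23} exists exactly when the isomorphism $\la_\M$ of \eqref{od4eq6} does, relating the two through two morphisms of stacks: the \emph{split section} $s:\M\t\M\ra\cExact$ sending $([F_1],[F_2])$ to the split exact sequence $0\ra F_1\ra F_1\op F_2\ra F_2\ra 0$, and the projection $p=\pi_1\t\pi_3:\cExact\ra\M\t\M$. These satisfy $p\ci s=\id_{\M\t\M}$; moreover $\pi_1\ci s$ and $\pi_3\ci s$ are the two projections $\M\t\M\ra\M$, $\pi_2\ci s=\Phi$, and $\Phi\ci p:F_\bu\mapsto F_1\op F_3$. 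Everything below applies verbatim with $\M,\cExact$ replaced by $\oM,\cDist$ and exact sequences by distinguished triangles.

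For the implication ``orientation data $\Rightarrow$ compatibility with direct sums'' I would take $\xi_\M:=s^*(\la_\M)$. The identities above show that pulling the source and target of \eqref{od4eq6} back along $s$ produces exactly the source and target of \eqref{od3eq23}, so $\xi_\M$ has the right shape. The point to verify is $s^*(\ka_\M)=\phi_\M$: the tautological exact sequence \eqref{od4eq1} pulls back under $\id_X\t s$ to the split exact sequence of universal sheaves on $X\t\M\t\M$, hence the $3\t 3$ diagram \eqref{od4eq3} pulls back to its split form, and its determinant identity \eqref{od4eq4} becomes, after using \eqref{od3eq8} and \eqref{od3eq12} with $m$ odd, precisely $\phi_\M$ as in \eqref{od3eq21}. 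Then $\xi_\M\ot\xi_\M=s^*(\la_\M\ot\la_\M)=s^*(\ka_\M)=\phi_\M$.

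For the converse I would build $\la_\M$ out of $\xi_\M$ by pulling back along $p$ and correcting. Taking determinants in the $3\t 3$ diagram and using \eqref{od3eq8}, \eqref{od3eq12} and $m$ odd gives a canonical isomorphism $\theta:\pi_2^*(K_\M)\ra(\Phi\ci p)^*(K_\M)$ on $\cExact$ --- both sides being canonically $\pi_1^*(K_\M)\ot\pi_3^*(K_\M)\ot(\pi_1\t\pi_3)^*(L_\M)\ot(\pi_3\t\pi_1)^*(L_\M)$ --- together with the identity $\theta\ci\ka_\M=p^*(\phi_\M)$, both sides being the same ``determinant of a $3\t 3$ diagram'' isomorphism once the objects are matched. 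The key geometric input is that $\theta$ is a square: the extension of $F_3$ by $F_1$ with class $t\cdot\alpha$, where $\alpha$ is the universal extension class of \eqref{od4eq1} and $t$ is the coordinate on $\bA^1$, defines a flat family of sheaves over $\cExact\t\bA^1$ equal to the universal middle term at $t=1$ and to $F_1\op F_3$ at $t=0$; passing to topological realizations and using contractibility of $(\bA^1)^\top$, this exhibits $\pi_2$ and $\Phi\ci p$ as homotopic maps $\cExact^\top\ra\M^\top$. A homotopy pulls $K_\M$ and its chosen square root $K_\M^{1/2}$ back to isomorphic bundles compatibly, so the induced isomorphism on $K_\M$ --- which I would identify with $\theta$ --- is the square of an isomorphism $\tau:(\Phi\ci p)^*(K_\M^{1/2})\ra\pi_2^*(K_\M^{1/2})$ with $\tau\ot\tau=\theta^{-1}$. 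Setting $\la_\M:=\tau\ci p^*(\xi_\M)$ then gives an isomorphism of the form \eqref{od4eq6}, and $\la_\M\ot\la_\M=(\tau\ot\tau)\ci p^*(\xi_\M\ot\xi_\M)=\theta^{-1}\ci p^*(\phi_\M)=\ka_\M$.

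The step I expect to be the main obstacle is the bookkeeping behind the two compatibility statements $s^*(\ka_\M)=\phi_\M$ and $\theta\ci\ka_\M=p^*(\phi_\M)$, together with the identification of the homotopy-induced isomorphism with the algebraically defined $\theta$ (which amounts to running the $3\t 3$ diagram in a family over $\cExact\t\bA^1$ degenerating the tautological extension to the split one). Concretely, one must check that the determinant-multiplicativity isomorphisms assembling $\ka_\M$ from \eqref{od4eq3} agree, under $s$ and $p$, with those assembling $\phi_\M$ from \eqref{od3eq14}--\eqref{od3eq16}, including the order-dependent sign conventions of Remark \ref{od3rem2}(b) and the symmetry isomorphism \eqref{od3eq12}. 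None of this is deep --- and since only existence is at stake, an overall sign on a connected component of $\cExact$ can always be absorbed by rescaling $\la_\M$ --- but it requires care.
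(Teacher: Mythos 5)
Your proposal is correct and rests on the same core ideas as the paper's proof: the split section $\Xi:\M\t\M\ra\cExact$ (your $s$) and the projection $\pi_1\t\pi_3:\cExact\ra\M\t\M$ (your $p$), the identity $\Xi^*(\ka_\M)=\phi_\M$ handling the forward direction, and the $\bA^1$-homotopy $G$ scaling the extension class by $t$ handling the converse. The only difference is in how the converse is packaged: the paper proves the clean general lemma that pullback along an $\bA^1$-homotopy equivalence induces an equivalence of groupoids of square roots and concludes abstractly, whereas you construct $\la_\M=\tau\ci p^*(\xi_\M)$ explicitly via a homotopy-transport isomorphism $\tau$ squaring to $\theta^{-1}$, with any scalar discrepancy absorbed by rescaling as you note --- two renditions of the same argument.
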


\begin{proof} Use the notation of Definitions \ref{od3def1}, \ref{od3def2} and \ref{od4def2}. Write $\Xi:\M\t\M\ra\cExact$ for the natural morphism acting on $\C$-points by
\begin{equation*}
([F_1],[F_2])\longmapsto \bigl(0\ra F_1\ra F_1\op F_2\ra F_2\ra 0\bigr).
\end{equation*}
Then $\pi_1\ci\Xi=\pi_1$, $\pi_2\ci\Xi=\Phi$, and $\pi_3\ci\Xi=\pi_2$. Using this we can show that the pullback of \eq{od3eq14} by $\De_{\M\t\M}:\M\t\M\ra\M\t\M\t\M\t\M$ is essentially the same as the pullback of \eq{od4eq3} by $\Xi:\M\t\M\ra\cExact$. So taking determinant line bundles shows that the pullback of \eq{od3eq15}  by $\De_{\M\t\M}$ is \eq{od4eq4}. So from the definitions of $\phi_\M$ in \eq{od3eq16} and $\ka_\M$ in \eq{od4eq5} we see that~$\phi_\M=\Xi^*(\ka_\M)$.

It follows that if $\la_\M$ in \eq{od4eq6} satisfies $\la_\M\ot\la_\M=\ka_\M$, then $\xi_\M=\Xi^*(\la_\M)$ in \eq{od3eq23} satisfies $\xi_\M\ot\xi_\M=\phi_\M$. This proves that if $[K_\M^{1/2}]$ is orientation data for $\M$, then $[K_\M^{1/2}]$ is a spin structure on $\M$ compatible with direct sums.

For the converse, observe that $\pi_1\t\pi_3:\cExact\ra\M\t\M$ is left inverse to $\Xi:\M\t\M\ra\cExact$. The fibre of $\pi_1\t\pi_3$ over $([F_1],[F_3])$ is $[\Ext^1(F_3,F_1)/\Hom(F_3,F_1)]$, which is contractible. One can define a morphism $G:\cExact\t\bA^1\ra\cExact$ with $G\vert_{\cExact\t\{0\}}=\Xi\ci(\pi_1\t\pi_3)$ and $G\vert_{\cExact\t\{1\}}=\id_{\cExact}$, such that if the exact sequence $F_\bu=\bigl(0\ra F_1\ra F_2\ra F_3\ra 0\bigr)$ corresponds to $\al\in\Ext^1(F_3,F_1)$, then $G\bigl([F_\bu],t\bigr)$ corresponds to $t\al\in\Ext^1(F_3,F_1)$. Thus $\pi_1\t\pi_3$ and $\Xi$ are $\bA^1$-homotopy equivalences.

Recall here that in general a morphism $f\colon Y\to Z$ is called an $\bA^1$-{\it homotopy equivalence\/} if there exist morphisms $g\colon Z\to Y,$ $ H\colon Y\t\bA^1\to Y,$ and $J\colon Z\t\bA^1\to Z$ with $H\circ(\id_Y\t i_0)=g\circ f,$ $H\circ(\id_Y\t i_1)=\id_Y,$ $J\circ(\id_Z\t i_0)=f\circ g,$ $J\circ(\id_Z\t i_1)=\id_Z,$ for the inclusion $i_k$ of a point at $k\in\bA^1.$

Now if $L\ra Z$ is a line bundle, then pullback $f^*$ induces an equivalence of categories from the groupoid $S_L$ of square roots $L^{1/2}$ on $Z$ to the groupoid $S_{f^*(L)}$ of square roots $(f^*(L))^{1/2}$ on $Y,$ as follows. From the commutative diagrams
\[
\xymatrix{
	&	*+[r]{S_{(fg)^*(L)}}\\
S_L\ar[rd]_{\id_{S_L}}\ar[ru]^{g^*\circ f^*}\ar[r]^{J^*}	&	*+[r]{S_{J^*(L)}}\ar[u]_{(\id_Z\t i_0)^*}\ar[d]^{(\id_Z\t i_1)^*}\\
	&	*+[r]{S_L}
}
\xymatrix{
	&	*+[r]{S_{(gf)^*(f^*(L))}}\\
S_{f^*(L)}\ar[rd]_{\id_{S_{f^*(L)}}}\ar[ru]^{f^*\circ g^*}\ar[r]^{H^*}	&	*+[r]{S_{H^*(f^*(L))}}\ar[u]_{(\id_Y\t i_0)^*}\ar[d]^{(\id_Y\t i_1)^*}\\
	&	*+[r]{S_{f^*(L)}}
}
\]
it suffices to prove that each of the left vertical functors is an equivalence. For example, consider $\id_Z\t i_0\colon Z\to Z\t\bA^1$ and a complex line bundle $K\to Z\t\bA^1.$ The key point is that, up to isomorphism, any two square roots of $K$ differ by a principal $\Z_2$-bundle over $Z\t\bA^1$ and similarly for $(\id_Z\t i_0)^*K\to Z.$ Pullback along $\id_Z\t i_0$ identifies the groups of isomorphism classes of principal $\Z_2$-bundles over $Z$ and $Z\t\bA^1,$ so $\pi_0(\id_Z\t i_0)^*\colon \pi_0(S_K)\to \pi_0(S_{(\id_Z\t i_0)^*K})$ is an equivariant map of torsors, which are automatically bijective. Similarly, $\pi_1(\id_Z\t i_0)^*$ on automorphism groups is equivariant over the groups of $\Z_2$-valued maps.

Therefore the map $\Xi^*:\la_\M\mapsto\xi_\M$ above induces a 1-1 correspondence between $\la_\M$ in \eq{od4eq6} with $\la_\M\ot\la_\M=\ka_\M$ and $\xi_\M$ in \eq{od3eq23} with $\xi_\M\ot\xi_\M=\phi_\M$. So orientation data on $\M$ and spin structures on $\M$ compatible with direct sums are the same. The analogous argument works for~$\oM$.	
\end{proof}

Combining Theorem \ref{od3thm2} and Proposition \ref{od4prop1} yields one of our main results, which as in \S\ref{od43} solves a long-standing problem in Donaldson--Thomas theory:

\begin{thm}
\label{od4thm1}
Let\/ $X$ be a Calabi--Yau\/ $3$-fold, and use the notation\/ $\M,\ab\oM,\ab K_\M,\ab\ldots$ of Definitions\/ {\rm\ref{od3def1}} and\/ {\rm\ref{od4def2}}. Then we can construct canonical orientation data\/ $[K_\M^{1/2}]$ on\/ $\M$ and\/ $[K_\oM^{1/2}]$ on\/ $\oM,$ with\/~$[K_\oM^{1/2}]\vert_\M=[K_\M^{1/2}]$.
\end{thm}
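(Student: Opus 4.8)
The plan is to obtain Theorem \ref{od4thm1} as an immediate consequence of Theorem \ref{od3thm2} and Proposition \ref{od4prop1}, so that all the genuine work is pushed into those two results. First I would observe that a Calabi--Yau 3-fold $X$ is in particular a spin smooth projective 3-fold with $m=3$ odd: by Definition \ref{od4def1} it carries the trivial spin structure $(\O_X,\jmath)$, and with this choice the complexes $\cC^\bu,\cD^\bu$ and line bundles $K_\M,L_\M$ of Definition \ref{od3def1} reduce to the Ext-complex description recorded in Definition \ref{od4def1}. Thus the hypotheses of Theorem \ref{od3thm2} are met, and that theorem supplies canonical spin structures $[K_\M^{1/2}]$ on $\M$ and $[K_\oM^{1/2}]$ on $\oM$, both compatible with direct sums in the sense of Definition \ref{od3def2}, and with $[K_\oM^{1/2}]\vert_\M=[K_\M^{1/2}]$.

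Second I would apply Proposition \ref{od4prop1}, which for a Calabi--Yau $m$-fold with $m$ odd and the trivial spin structure identifies ``spin structure on $\M$ (or $\oM$) compatible with direct sums'' with ``orientation data on $\M$ (or $\oM$)'' in the sense of Definition \ref{od4def2}. Hence the canonical spin structures from the first step \emph{are} canonical orientation data, and canonicity is inherited because Theorem \ref{od3thm2} produces a preferred isomorphism class rather than a mere existence statement. The restriction compatibility $[K_\oM^{1/2}]\vert_\M=[K_\M^{1/2}]$ persists because the equivalence of Proposition \ref{od4prop1} is built from the natural morphisms $\Xi$ and $\pi_1\t\pi_3$ between $\M\t\M,\oM\t\oM$ and $\cExact,\cDist$, and these intertwine the open inclusions $\M\hookra\oM$ and $\cExact\hookra\cDist$; so restricting from $\oM$ to $\M$ the orientation-data condition over $\cDist$ becomes the orientation-data condition over $\cExact$.

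There is essentially no obstacle internal to this theorem: the identity $\phi_\M=\Xi^*(\ka_\M)$ and the $\bA^1$-homotopy equivalence of $\Xi$ with $\pi_1\t\pi_3$, both established in the proof of Proposition \ref{od4prop1}, already guarantee that the direct-sum compatibility datum $\xi_\M$ transports to and from the short-exact-sequence datum $\la_\M$ in \eq{od4eq6}, so the only labour is to cite the two preceding results and track the restriction to $\M$. The real difficulty is wholly contained in Theorem \ref{od3thm2}, whose proof is deferred to \S\ref{od5} and rests on the differential-geometric Theorem \ref{od2thm1} together with the comparison results of Cao, Gross and Joyce \cite{CGJ} needed to descend the spin structures from the connection moduli stacks $\B_P$ to $\M_{\rm vect}$ and then extend them across $\M$ and $\oM$. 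Granting Theorem \ref{od3thm2}, the present statement is a restatement in Calabi--Yau language.
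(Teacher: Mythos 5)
Your proposal is correct and coincides with the paper's own argument: the paper derives Theorem \ref{od4thm1} in exactly this way, by observing that the trivial spin structure of Definition \ref{od4def1} brings $X$ under the hypotheses of Theorem \ref{od3thm2}, then invoking Proposition \ref{od4prop1} to identify spin structures compatible with direct sums with orientation data, with the restriction compatibility $[K_\oM^{1/2}]\vert_\M=[K_\M^{1/2}]$ already supplied by Theorem \ref{od3thm2}.
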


\begin{rem}
\label{od4rem1}
{\bf(a)} So far as the authors know, Theorem \ref{od4thm1} is the first complete proof of existence of orientation data for {\it any example\/} of a (projective) Calabi--Yau 3-fold. We construct {\it canonical\/} orientation data for {\it all\/} Calabi--Yau 3-folds.

\smallskip 

\noindent{\bf(b)} The authors prefer the term `spin structure', as in \S\ref{od2}--\S\ref{od3}, to `orientation data'. But the term `orientation data' is already established in the literature.
\smallskip 

\noindent{\bf(c)} We review the literature on orientation data. The argument of Nekrasov and Okounkov \cite[\S 6]{NeOk} explained in Theorem \ref{od3thm1} is sufficient to establish the existence of square roots $K_\M^{1/2}$ in some situations, for instance if we consider a stable fine moduli scheme $\M^\al_{\rm st}(\tau)$ rather than a moduli stack. But it does not seem to help with either making a canonical choice of $[K_\M^{1/2}]$, or proving compatibility with direct sums or exact sequences as in \eq{od3eq23} or~\eq{od4eq6}.

Davison \cite{Davi1} constructs orientation data for moduli stacks of objects in certain classes of 3-Calabi--Yau categories coming from representation theory, including derived categories of quivers with superpotential. See \cite{Davi2} for a survey.

Maulik and Toda \cite{MaTo} discuss `orientation data' (really meaning square roots $K_S^{1/2}$) as part of a programme to define Gopakumar--Vafa invariants of Calabi--Yau 3-folds using perverse sheaf methods (see \S\ref{od43}). They are mainly interested in constructing square roots $K_S^{1/2}$ on certain moduli schemes $S$, and impose the condition that $K_S^{1/2}$ should be trivial on the fibres of some fibrations $\pi:S\ra T$. In a sequel \cite{Toda}, Toda constructs square roots $K_\M\vert_\cN^{1/2}$ for certain substacks $\cN\subset\M$ of sheaves of dimension 0 and 1 on a Calabi--Yau 3-fold.

Thomas \cite{Thom2} defines and studies Vafa--Witten invariants of projective surfaces $S$ via Donaldson--Thomas theory of the moduli stack $\M$ of compactly-supported coherent sheaves on the noncompact Calabi--Yau 3-fold $X=K_S$, as in \S\ref{od42}. (More precisely, Thomas uses a modification of $\M$, parametrizing sheaves with fixed determinants over $S$, and with centre of mass zero in each $K_S$ fibre.) The action of $T=\C^*$ rotating the fibres of $K_S$ induces a $T$-action on $\M$, so we can consider the fixed substack $\M^T$. The natural symmetric obstruction theory $\phi:\cE^\bu\ra\bL_\M$ is $T$-equivariant, where the symmetry $\om:(\cE^\bu)^\vee\,{\buildrel\cong\over\longra}\, \cE^\bu[-1]$ has $T$-weight 1. Under these conditions, using that $\om$ has odd $T$-weight, he shows \cite[Prop.~2.6]{Thom2} that the restriction of $K_\M=\det\cE^\bu$ to $\M^T$ has a natural square root $K_\M\vert_{\M^T}^{1/2}$, that is, canonical $T$-localized orientation data exists.

Shi \cite{Shi} constructs orientation data for the moduli stack of compactly supported coherent sheaves on the noncompact Calabi--Yau 3-fold $K_{\CP^2}$, as in~\S\ref{od42}. 

\end{rem}

Proposition \ref{od4prop1} also justifies the following definition:

\begin{dfn}
\label{od4def3}
Let $X$ be a Calabi--Yau $m$-fold for $m$ odd, with the trivial spin structure $(\O_X,\jmath)$, and let $\M,\oM$ be as in Remark \ref{od3rem1}(e),(h). We define {\it strong orientation data\/ $(K_\M^{1/2},\xi_\M)$ on\/} $\M$ (or {\it strong orientation data\/ $(K_\oM^{1/2},\xi_\oM)$ on\/} $\oM$) to be a strong spin structure on $\M$ (or $\oM$) compatible with direct sums, in the sense of Definition~\ref{od3def3}.
\end{dfn}

\subsection{\texorpdfstring{Extension to noncompact Calabi--Yau $m$-folds}{Extension to noncompact Calabi--Yau m-folds}}
\label{od42}

As in Definition \ref{od4def1}, unless we explicitly say otherwise we suppose Calabi--Yau $m$-folds $X$ are projective, so their underlying complex manifolds $X^\ran$ are compact. We now explain how to generalize \S\ref{od41} to the noncompact case.

\begin{dfn}
\label{od4def4}
A {\it noncompact Calabi--Yau\/ $m$-fold}, or {\it local Calabi--Yau\/ $m$-fold}, is a smooth quasiprojective $\C$-scheme $X$ of complex dimension $m$, which is not projective (so not proper), with a section $\th\in H^0(K_X)$ of the canonical bundle $K_X$ inducing an isomorphism $\th:\O_X\ra K_X$. Examples include canonical bundles $X=K_S$ for a smooth projective $\C$-scheme $S$ of dimension~$m-1$.

For a noncompact Calabi--Yau $m$-fold $X$, instead of $\coh(X)$ we consider the full subcategory $\coh_\cs(X)\subset\coh(X)$ of {\it compactly-supported coherent sheaves\/} $F$ (i.e.\ $\mathop{\rm supp}F$ is proper), and instead of $D^b\coh(X)$ we consider the subcategory $\Perf_\cs(X)\subset \Perf(X)$ of perfect complexes $F^\bu$ whose cohomology sheaves $h^i(F^\bu)$ for $i\in\Z$ are compactly-supported. Then $\coh_\cs(X)\subset\Perf_\cs(X)$. 

We write $\M$ for the moduli stack of objects in $\coh_\cs(X)$, as an Artin $\C$-stack, and $\oM$ for the moduli stack of objects in $\Perf_\cs(X)$, as a higher $\C$-stack.

We must work with $\coh_\cs(X)$ rather than $\coh(X)$, and with $\Perf_\cs(X)$ rather than $\Perf(X)=D^b\coh(X)$ or $D^b\coh_\cs(X)$, to make the theory well behaved. For example, non-compactly supported coherent sheaves $E$ on $X$, such as $E=\O_X$, may have infinite-dimensional automorphism groups $\Aut(E)$, so the moduli stack of objects in $\coh(X)$ does not exist as an Artin $\C$-stack.

The material of \S\ref{od3}--\S\ref{od41} extends to quasi-projective $X$, replacing $\coh(X),\ab D^b\coh(X)$ by $\coh_\cs(X),\Perf_\cs(X)$, in a straightforward way. In particular, the projections $(\pi_2)_*,(\pi_{23})_*$ in \eq{od3eq5}--\eq{od3eq6} needed $X$ proper to be well defined. But as we are working with compactly-supported sheaves, $\mathop{\rm supp}\cU\subset X\t\M$ is proper over $\M$, and this is sufficient to define $(\pi_2)_*,(\pi_{23})_*$ when $X$ is not proper.

Thus Definition \ref{od4def2} extends to give notions of ({\it constructible\/}) {\it orientation data\/} on $\M$ and $\oM$ when $X$ is a noncompact Calabi--Yau $m$-fold for $m$ odd.
\end{dfn}

Any smooth quasi-projective $m$-fold $X$ may be embedded as an open $\C$-subscheme $X\subset Y$ of a smooth projective $m$-fold $Y$. Then compactly-supported coherent sheaves or perfect complexes on $X$ extend uniquely to coherent sheaves or perfect complexes on $Y$ which are zero on a neighbourhood of $Y\sm X$. Hence we have inclusions $\M_X\hookra\M_Y$, $\oM_X\hookra\oM_Y$ of the moduli stacks of objects in $\coh_\cs(X)\hookra\coh(Y)$ and~$\Perf_\cs(X)\hookra\Perf(Y)=D^b\coh(Y)$.

A spin structure $(J_Y,\jmath_Y)$ on $Y$ may be restricted to a spin structure $(J_X,\jmath_X)$ on $X$. Then a spin structure $[K_{\M_Y}^{1/2}]$ on $\M_Y$ compatible with direct sums for $(J_Y,\jmath_Y)$ restricts to a spin structure $[K_{\M_X}^{1/2}]=[K_{\M_Y}^{1/2}\vert_{\M_X}]$ on $\M_X\subset\M_Y$ compatible with direct sums for $(J_X,\jmath_X)$, and similarly for $\oM_X,\oM_Y$. Hence Proposition \ref{od4prop1} implies:

\begin{prop}
\label{od4prop2}
Let\/ $X$ be a noncompact Calabi--Yau\/ $m$-fold. Suppose we can embed\/ $X\subset Y$ as an open subscheme of a smooth projective\/ $m$-fold\/ $Y,$ and that\/ $Y$ has a spin structure\/ $(J_Y,\jmath_Y)$ which restricts to the trivial spin structure\/ $(\O_X,\jmath)$ on\/ $X$. Then any spin structure\/ $[K_{\M_Y}^{1/2}]$ on\/ $\M_Y$ \textup(or\/ $[K_{\oM_Y}^{1/2}]$ on\/ $\oM_Y$\textup) compatible with direct sums, as in Definition\/ {\rm\ref{od3def2},} restricts to orientation data\/ $[K_{\M_Y}^{1/2}\vert_{\M_X}]$ on\/ $\M_X$ \textup(or\/ $[K_{\oM_Y}^{1/2}\vert_{\oM_X}]$ on\/ $\oM_X$\textup), as in Definition\/ {\rm\ref{od4def4}}.
\end{prop}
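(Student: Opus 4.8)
The plan is to reduce the claim to Proposition \ref{od4prop1}, applied not to $Y$ but to the noncompact Calabi--Yau $m$-fold $X$ itself equipped with its trivial spin structure $(\O_X,\jmath)$ (which is, by hypothesis, the restriction of $(J_Y,\jmath_Y)$). The one thing that actually needs proof is that restriction along the inclusion $\M_X\hookra\M_Y$ (resp.\ $\oM_X\hookra\oM_Y$) carries a spin structure compatible with direct sums on $\M_Y$ to a spin structure compatible with direct sums on $\M_X$; granting this, the form of Proposition \ref{od4prop1} extended to $\coh_\cs(X)$ and $\Perf_\cs(X)$ (as in Definition \ref{od4def4}) identifies the latter with orientation data on $\M_X$, which is exactly the assertion.

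First I would recall from the discussion preceding the statement that a compactly-supported object of $\coh_\cs(X)$ or $\Perf_\cs(X)$ has proper, hence closed-in-$Y$, support, so it extends uniquely by zero to $Y$; this yields the inclusions $\M_X\hookra\M_Y$, $\oM_X\hookra\oM_Y$, under which the universal sheaf $\cU_Y$ (resp.\ complex $\cU^\bu_Y$) pulls back to the extension by zero of $\cU_X$ (resp.\ $\cU^\bu_X$), and under which the direct sum morphisms $\Phi,\bar\Phi$ restrict, since a direct sum of compactly-supported objects is compactly-supported.

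The heart of the argument is then to check that the line bundles and isomorphisms of Definition \ref{od3def1} restrict. Using $J_Y\vert_X\cong\O_X$, the complex $\pi_1^*(J_Y)\ot\cU_Y^\vee\ot\cU_Y$ becomes, over $X\t\M_X$ inside $Y\t\M_X$, the extension by zero of $\cU_X^\vee\ot\cU_X$; as this complex is proper over $\M_X$, base change for the pushforward identifies $\cC^\bu_Y\vert_{\M_X}\cong\cC^\bu_X$, hence $K_{\M_Y}\vert_{\M_X}\cong K_{\M_X}$ by \eq{od3eq5}, and the identical argument with \eq{od3eq6} gives $L_{\M_Y}\vert_{\M_X\t\M_X}\cong L_{\M_X}$. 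The manipulations \eq{od3eq14}--\eq{od3eq16} producing $\phi_\M$ are built from these pushforwards and from \eq{od3eq3}, all of which restrict compatibly, so $\phi_{\M_Y}$ restricts to $\phi_{\M_X}$. Consequently a representative $K_{\M_Y}^{1/2}$ together with an isomorphism $\xi_{\M_Y}$ satisfying $\xi_{\M_Y}\ot\xi_{\M_Y}=\phi_{\M_Y}$ as in \eq{od3eq23} restricts to $K_{\M_Y}^{1/2}\vert_{\M_X}$ and $\xi_{\M_Y}\vert_{\M_X\t\M_X}$, witnessing that $[K_{\M_Y}^{1/2}\vert_{\M_X}]$ is a spin structure on $\M_X$ compatible with direct sums for $(\O_X,\jmath)$; the same argument with $\bar\Phi,\oM$ in place of $\Phi,\M$ handles $\oM_X\subset\oM_Y$.

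Finally I would invoke Proposition \ref{od4prop1} for $X$: its proof uses only finite-dimensionality of $\Ext^*(F_1,F_2)$ for $F_i$ in $\coh_\cs(X)$ (resp.\ $\Perf_\cs(X)$) and the contractibility of the fibres of $\pi_1\t\pi_3\colon\cExact\ra\M\t\M$, both of which persist in the compactly-supported setting, so it applies verbatim and identifies spin structures on $\M_X$ (resp.\ $\oM_X$) compatible with direct sums with orientation data. Therefore $[K_{\M_Y}^{1/2}\vert_{\M_X}]$ is orientation data on $\M_X$, and likewise for $\oM_X$. The only genuinely non-formal ingredient is the base-change isomorphism $\cC^\bu_Y\vert_{\M_X}\cong\cC^\bu_X$ and its analogue for $\cD^\bu$: one must confirm that the pushforward along the $X$-fibred projection commutes with restriction to the substack $\M_X\subset\M_Y$, and this is precisely where the properness of the support of $\cU$ over $\M_X$ and the hypothesis $J_Y\vert_X\cong\O_X$ are both needed.
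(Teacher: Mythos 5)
Your proposal follows the same route as the paper: the paper's entire proof is the short paragraph immediately preceding the statement, which records the inclusions $\M_X\hookra\M_Y$, $\oM_X\hookra\oM_Y$ via extension by zero, asserts that a spin structure compatible with direct sums on $\M_Y$ restricts to one on $\M_X$ for the restricted spin structure $(J_X,\jmath_X)=(\O_X,\jmath)$, and then appeals to Proposition~\ref{od4prop1}. The one place you go further is in actually verifying the restriction claim — showing $\cC^\bu_Y\vert_{\M_X}\cong\cC^\bu_X$ and $\cD^\bu_Y\vert_{\M_X\t\M_X}\cong\cD^\bu_X$ by base change for the proper pushforward together with $J_Y\vert_X\cong\O_X$, hence $K_{\M_Y}\vert_{\M_X}\cong K_{\M_X}$, $L_{\M_Y}\vert_{\M_X\t\M_X}\cong L_{\M_X}$, and $\phi_{\M_Y}\vert_{\M_X\t\M_X}=\phi_{\M_X}$ — a step the paper leaves implicit; this is correct and is indeed the only nontrivial content.
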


Combining Theorem \ref{od3thm2} and Proposition \ref{od4prop2} yields:

\begin{thm}
\label{od4thm2}
Let\/ $X$ be a noncompact Calabi--Yau\/ $3$-fold. Suppose we can embed\/ $X\subset Y$ as an open subscheme of a smooth projective\/ $3$-fold\/ $Y,$ and that\/ $Y$ has a spin structure\/ $(J_Y,\jmath_Y)$ which restricts to the trivial spin structure\/ $(\O_X,\jmath)$ on\/ $X.$ Then we can construct canonical orientation data\/ $[K_\M^{1/2}]$ on\/ $\M$ and\/ $[K_\oM^{1/2}]$ on\/ $\oM,$ with\/~$[K_\oM^{1/2}]\vert_\M=[K_\M^{1/2}]$.
\end{thm}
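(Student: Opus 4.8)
The plan is to deduce Theorem~\ref{od4thm2} by applying Theorem~\ref{od3thm2} to the projective compactification $Y$ and then invoking the restriction mechanism of Proposition~\ref{od4prop2}. The first step is to record the identification of moduli stacks: since we work with compactly-supported coherent sheaves and with perfect complexes having compactly-supported cohomology, extension by zero across $Y\sm X$ gives open immersions $\M_X\hookra\M_Y$ and $\oM_X\hookra\oM_Y$, and the universal sheaf $\cU_Y$ on $Y\t\M_X$ is supported in $X\t\M_X$ and restricts there to the universal sheaf $\cU_X$ of $\M_X$ (similarly for $\oM$). This is precisely the set-up discussed in the paragraphs preceding Proposition~\ref{od4prop2}.

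Next I would apply Theorem~\ref{od3thm2} to the spin smooth projective $3$-fold $Y$ equipped with the spin structure $(J_Y,\jmath_Y)$. This yields canonical spin structures $[K_{\M_Y}^{1/2}]$ on $\M_Y$ and $[K_{\oM_Y}^{1/2}]$ on $\oM_Y$, each compatible with direct sums, with $[K_{\oM_Y}^{1/2}]\vert_{\M_Y}=[K_{\M_Y}^{1/2}]$. By hypothesis $(J_Y,\jmath_Y)$ restricts on $X$ to the trivial spin structure $(\O_X,\jmath)$, so Proposition~\ref{od4prop2} applies and shows that the restrictions $[K_{\M_Y}^{1/2}\vert_{\M_X}]$ and $[K_{\oM_Y}^{1/2}\vert_{\oM_X}]$ are orientation data on $\M_X$ and on $\oM_X$ in the sense of Definition~\ref{od4def4}. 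I would simply take these restrictions as the definitions of $[K_\M^{1/2}]$ and $[K_\oM^{1/2}]$. The compatibility $[K_\oM^{1/2}]\vert_\M=[K_\M^{1/2}]$ then follows by restricting the corresponding identity on $Y$ one stage further along $\M_X\hookra\oM_X$, using transitivity of restriction of isomorphism classes of square roots. Canonicity is inherited directly from Theorem~\ref{od3thm2}: the square roots on $\M_Y,\oM_Y$ there are canonical, hence so are their restrictions, as functions of the data $(X\subset Y,(J_Y,\jmath_Y))$.

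I do not expect a genuine obstacle in this argument: the entire substantive content is carried by Theorem~\ref{od3thm2} (and, behind it, Theorem~\ref{od2thm1} and the results of \cite{CGJ}), while Proposition~\ref{od4prop2} packages the translation to orientation data. The one point needing care --- and it is already handled in the discussion leading to Proposition~\ref{od4prop2} --- is the verification that every ingredient of Definitions~\ref{od3def1}, \ref{od3def2} and \ref{od4def2} for the noncompact $X$, namely the complexes $\cC^\bu,\cD^\bu$, the line bundles $K_\M,L_\M$, the isomorphisms $\phi_\M,\ka_\M$ and the direct-sum morphisms, really is the restriction from $Y$ of its projective counterpart. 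This works because $\mathop{\rm supp}\cU_X$ is proper over $\M_X$, so the pushforwards $(\pi_2)_*$ and $(\pi_{23})_*$ defining $\cC^\bu$ and $\cD^\bu$ remain well defined, and because $J_Y\vert_X\cong\O_X$ makes the $J$-twist in \eq{od3eq5}--\eq{od3eq6} trivial after restriction; consequently the restricted square root together with its restricted compatibility isomorphism solve $\xi_\M\ot\xi_\M=\phi_\M$ (equivalently $\la_\M\ot\la_\M=\ka_\M$) on $\M_X$ and $\oM_X$, as required.
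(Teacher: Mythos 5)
Your proposal is correct and follows exactly the paper's route: the paper proves Theorem \ref{od4thm2} simply by combining Theorem \ref{od3thm2} applied to the spin compactification $Y$ with the restriction mechanism of Proposition \ref{od4prop2}, which is precisely what you do. The extra checks you spell out (properness of $\mathop{\rm supp}\cU$ over the moduli stack, triviality of the $J$-twist on $X$, restriction of $\phi_\M,\ka_\M$ and the compatibility isomorphisms) are the same points the paper handles in Definition \ref{od4def4} and the discussion preceding Proposition \ref{od4prop2}.
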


\begin{rem}
\label{od4rem2}
Arkadij Bojko, a PhD student supervised by the first author, is working on an alternative construction of the orientation data $[K_\M^{1/2}],[K_\oM^{1/2}]$ in Theorem \ref{od4thm2} which is independent of choice of spin compactification $Y,(J_Y,\jmath_Y)$ of $X$, and also works even if $X$ does not admit a spin compactification.
\end{rem}

\begin{ex}
\label{od4ex1}
Let $S$ be a smooth projective complex surface. Define $X$ to be the canonical bundle $K_S$, considered as a noncompact Calabi--Yau 3-fold. Define $Y={\mathbb P}(K_S\op\O_S)$, a $\CP^1$-bundle over $S$, which is a smooth projective 3-fold. Define a smooth divisor $D\subset Y$ by $D={\mathbb P}(0\op\O_S)\subset{\mathbb P}(K_S\op\O_S)$. Then there is a natural identification $X\cong Y\sm D$ by mapping $(s,\ka)$ in $X=K_S$ to $(s,[\ka,1])$ in $Y$, for $s\in S$ and~$\ka\in K_S\vert_s$.

Write $\O_Y(D)$ for the usual line bundle on $Y$ with section $s_D$ vanishing transversely on the divisor $D\subset Y$. Then by a standard computation one can show that there is a natural isomorphism $K_Y\cong \O_Y(D)^{-2}$. Hence we get a spin structure $(J,\jmath)$ on $Y$ with $J=\O_Y(D)^{-1}$ and $\jmath$ the isomorphism $\O_Y(D)^{-2}\ra K_Y$. On restricting to $Y\sm D\cong X$, we have a section $s_D\vert_{Y\sm D}^{-1}$ of $J\vert_{Y\sm D}$ giving an isomorphism $J\vert_{Y\sm D}\cong\O_{Y\sm D}$, which identifies $(J,\jmath)\vert_{Y\sm D}$ with the trivial spin structure on $X$. Thus Theorem \ref{od4thm2} constructs canonical orientation data on $\M_X$ and~$\oM_X$.	
\end{ex}

\begin{rem}
\label{od4rem3}
In \cite[\S 2.2]{Shi}, Shi outlines a construction of canonical orientation data $[K_\M^{1/2}]$ for $\coh_\cs(K_{\CP^2})$, which she attributes to unpublished work of Yukinobu Toda. Toda's construction also works for $\coh_\cs(K_S)$ for any smooth projective surface $S$. We expect this gives the same answer as Example~\ref{od4ex1}.
\end{rem}

\subsection{Donaldson--Thomas theory and its generalizations}
\label{od43}

Finally we discuss Donaldson--Thomas theory of Calabi--Yau 3-folds, and various generalizations of it in which orientation data is important, so our Theorems \ref{od4thm1} and \ref{od4thm2} make a new contribution. These generalizations are best explained using the Derived Algebraic Geometry of To\"en and Vezzosi \cite{Toen1,Toen2,ToVe1,ToVe2}, and the shifted symplectic geometry of Pantev, To\"en, Vaqui\'e and Vezzosi \cite{PTVV}. The next remark provides some very brief orientation on these.

\begin{rem}
\label{od4rem4}
{\bf(a)} Derived Algebraic Geometry studies {\it derived\/ $\C$-stacks\/} $\bS$. They form an $\iy$-category $\DSta_\C$. We use bold characters $\bS,\bcM,\bs f:\bS\ra\bs T,\ldots$ to indicate derived stacks and their morphisms. Generalizing \eq{od3eq1}, \eq{od3eq2} and \eq{od3eq4}, one may regard derived $\C$-stacks as $\iy$-functors
\begin{equation*}
\bS:\{\text{derived commutative $\C$-algebras}\}\longra\{\text{$\iy$-groupoids}\}
\end{equation*}
satisfying many conditions. Here two (essentially equivalent) possible models for derived commutative $\C$-algebras are simplicial $\C$-algebras, and commutative differential graded $\C$-algebras (cdgas) in nonpositive degrees.
\smallskip

\noindent{\bf(b)} There is a full and faithful {\it inclusion functor\/} $\io:\HSta_\C\hookra\DSta_\C$. We use this to identify $\HSta_\C$ as a full $\iy$-subcategory of $\DSta_\C$. There is a {\it classical truncation functor\/} $t_0:\DSta_\C\ra\HSta_\C$. We write the classical truncations of $\bS,\bcM,\ldots$ as $S,\cM,\ldots.$ For any $\bS$ in $\DSta_\C$ with classical truncation $S=t_0(\bS)$ there is a natural inclusion morphism $i_\bS:S=\io(S)\hookra\bS$. We can regard $\bS$ as a kind of `infinitesimal formal thickening' of $S$ in the `derived' directions.
\smallskip

\noindent{\bf(c)} We call a derived stack $\bS$ a {\it derived scheme}, or a {\it derived Artin stack}, if $S=t_0(\bS)$ lies in  $\Sch_\C\subset\HSta_\C$ or $\Art_\C\subset\HSta_\C$. We write $\DSch_\C\subset\DArt_\C\subset\DSta_\C$ for the full $\iy$-subcategories of these.
\smallskip

\noindent{\bf(d)} Let $\bS$ be a derived $\C$-stack. As in Remark \ref{od3rem1}(g), To\"en \cite[\S 3.1.7, \S 4.2.4]{Toen1} defines a triangulated category $L_\qcoh(\bS)$ of modules which should be used instead of $D\qcoh(\bS)$. It has a triangulated subcategory $\Perf(\bS)$ of {\it perfect complexes}. An object $\cE^\bu$ in $\Perf(\bS)$ has dual $(\cE^\bu)^\vee$ and determinant line bundle~$\det(\bL_\bS)$.
\smallskip

\noindent{\bf(e)} Let $\bS$ be a locally finitely presented derived $\C$-stack. Then To\"en and Vezzosi \cite[\S 4.2.5]{Toen1}, \cite[\S 1.4]{ToVe1} (see also Lurie \cite[\S 3.2]{Luri}) define the {\it cotangent complex\/}
$\bL_\bS$ in $L_\qcoh(\bS)$. It is perfect, as $\bS$ is locally finitely presented, so it has a determinant line bundle $\det\bL_\bS$ on $\bS$, which we call the {\it canonical bundle\/} of $\bS$ and write $K_\bS$. Its dual $\bT_\bS=\bL_\bS^\vee$ is called the {\it tangent complex}.
\smallskip

\noindent{\bf(f)} Let $X$ be a smooth projective $\C$-scheme. We often write $\bcM$ and $\boM$ for the derived moduli stacks of objects in $\coh(X)$ and $D^b\coh(X)$, respectively. These exist as locally finitely presented derived $\C$-stacks by To\"en--Vaqui\'e \cite{ToVa}, and $\bcM\subset\boM$ is an open substack. They have classical truncations $\M=t_0(\bcM)$ and $\oM=t_0(\boM)$, for $\M,\oM$ as in Remark~\ref{od3rem1}(e),(h).
\smallskip

\noindent{\bf(g)} Let $X$ be a Calabi--Yau $m$-fold with the trivial spin structure $(\O_X,\jmath)$. Then {\bf(f)} gives a derived stack $\bcM$ with perfect cotangent complex $\bL_\bcM$. We pull this back to a perfect complex $\bL_{i_\bcM}^*(\bL_\bcM)$ on $\cM=t_0(\bcM)$. Then using \eq{od3eq5} with $J=\O_X$ we can show that $\bL_{i_\bcM}^*(\bL_\bcM)\cong (\cC^\bu)^\vee[-1]$. So taking determinant line bundles gives an isomorphism, for $K_\M$ as in Definition~\ref{od3def1}:
\e
\bL_{i_\bcM}^*\bigl(\det(\bL_\bcM)\bigr)\cong K_\M.
\label{od4eq7}
\e
This justifies calling $K_\M$ the `canonical bundle' of $\M$ or $\bcM$.
\smallskip

\noindent{\bf(h)} It is a general principle that in passing from classical to derived geometry, one should replace vector bundles (such as the (co)tangent bundles $TS,T^*S$ of a manifold or smooth $\C$-scheme $S$) by perfect complexes (such as the (co)tangent complexes $\bT_\bS,\bL_\bS$ of a locally finitely presented derived $\C$-stack $\bS$). It is also a general principle, known as Kontsevich's `hidden smoothness' philosophy, that (nice) derived stacks behave a lot like manifolds or smooth schemes.

Following these principles, Pantev, To\"en, Vaqui\'e, and Vezzosi \cite{PTVV} introduced a derived version of symplectic geometry. Given a locally finitely presented derived $\C$-stack $\bS$, they defined notion of $k$-{\it shifted\/ $p$-form\/} $\om_0$ on $\bS$ (basically an element of $\bH^k(\La^p\bL_\bS)$) and $k$-{\it shifted closed\/ $p$-form\/} $\om=(\om_0,\om_1,\ldots)$ on $\bS$ (more complicated, but with a map $(\om_0,\om_1,\ldots)\mapsto\om_0$ to $k$-shifted $p$-forms). They define a $k$-{\it shifted symplectic structure\/} $\om$ on $\bS$ to be a $k$-shifted closed 2-form $\om$ on $\bS$ whose associated 2-form $\om_0$ induces an equivalence $\om_0\cdot:\bT_\bS\ra\bL_\bS[k]$.
They define $k$-{\it shifted Lagrangians\/} $\bs i:\bs L\ra\bS$ in $k$-shifted symplectic~$(\bS,\om)$.
\smallskip

\noindent{\bf(i)} Pantev et al.\ \cite[\S 2.1]{PTVV} show that if $X$ is a Calabi--Yau $m$-fold then $\bcM,\boM$ from {\bf(f)} have natural $(2-m)$-shifted symplectic structures $\om$. We can think of $\om$ as a geometric incarnation of Serre duality $\Ext^i(F,F)\cong \Ext^{m-i}(F,F)^*$ on the Calabi--Yau $m$-fold $X$. The following morphism is $(2-m)$-shifted Lagrangian:
\e
\bs\pi_1\t\bs\pi_2\t\bs\pi_3:\bcExact\longra(\bcM\t\bcM\t\bcM,\om\boxplus -\om\boxplus\om),
\label{od4eq8}
\e
where generalizing $\cExact$ in Definition \ref{od4def2}, we write $\bcExact$ for the derived moduli stack of exact sequences $F_\bu=\bigl(0\ra F_1\ra F_2\ra F_3\ra 0\bigr)$ in~$\coh(X)$.
\smallskip

\noindent{\bf(j)}  Following \cite{BBBJ,BBJ,BBDJS,BJM,Joyc}, several interesting geometric constructions start with $-1$-shifted symplectic derived $\C$-stacks $(\bS,\om)$. This holds because as in Ben-Bassat--Bussi--Brav--Joyce \cite{BBBJ,BBJ}, such $(\bS,\om)$ are (\'etale- or smooth-) locally modelled on the critical locus $\Crit(f:U\ra{\mathbb A}^1)$ of a regular function $f$ on a smooth $\C$-scheme $U$. Thus constructions for critical loci, such as perverse sheaves of vanishing cycles,  extend to $-1$-shifted symplectic derived $\C$-stacks.	
\end{rem}

The next theorem summarizes results of Ben-Bassat, Bussi, Brav, Dupont, Joyce, Meinhardt, and Szendr\"oi \cite{BBBJ,BBJ,BBDJS,BJM,Joyc}. They were inspired by ideas of Behrend \cite{Behr} and Kontsevich and Soibelman \cite{KoSo1,KoSo2,KoSo3} that predated the shifted symplectic geometry of Pantev--To\"en--Vaqui\'e--Vezzosi~\cite{PTVV}.

\begin{thm}
\label{od4thm3}
{\bf(a) \cite{BBBJ,BJM}} Suppose\/ $(\bS,\om)$ is a finite type\/ $-1$-shifted symplectic derived\/ $\C$-scheme or\/ $\C$-stack, and we are given an isomorphism class\/ $[K_\bS^{1/2}]$ of square roots\/ $K_\bS^{1/2}$ \textup(essentially, a \begin{bfseries}spin structure\end{bfseries} as in {\rm \S\ref{od32}}\textup). Then we can construct a natural motive\/ $MF_{\bS,\om}$ in a ring of motives\/ $\Mot_S$ on\/ $S=t_0(\bS).$ Here if\/ $(\bS,\om)$ is locally modelled on\/ $\Crit(f:U\ra{\mathbb A}^1)$ then\/ $MF_{\bS,\om}$ is locally modelled on the motivic vanishing cycle\/ $MF_{U,f}^{\rm mot,\phi}$ of $f$ from\/~{\rm\cite{DeLo}}.
\smallskip

\noindent{\bf(b) \cite{BBBJ,BBDJS}} Suppose\/ $(\bS,\om)$ is a\/ $-1$-shifted symplectic derived\/ $\C$-scheme or\/ $\C$-stack, and we are given a square root\/ $K_\bS^{1/2}$ \textup(essentially, a \begin{bfseries}strong spin structure\end{bfseries} as in {\rm \S\ref{od33}}\textup). Then we can construct a natural perverse sheaf\/ $P_{\bS,\om}^\bu$ on\/ $S=t_0(\bS)$. Here if\/ $(\bS,\om)$ is locally modelled on\/ $\Crit(f:U\ra{\mathbb A}^1)$ then\/ $P_{\bS,\om}^\bu$ is locally modelled on the perverse sheaf of vanishing cycles\/ $PV_{U,f}^\bu$ from\/~{\rm\cite[\S 2.4]{BBDJS}}.	
\end{thm}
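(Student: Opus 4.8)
The plan is to follow the strategy of the cited works \cite{BBBJ,BBJ,BBDJS,BJM,Joyc}: both parts will be constructed locally --- Zariski-locally for schemes, and on a smooth atlas for stacks --- and then glued by descent. The first step is to invoke the \emph{Darboux theorem} for $-1$-shifted symplectic derived schemes of Brav, Bussi and Joyce \cite{BBBJ,BBJ}: any finite type $-1$-shifted symplectic $(\bS,\om)$ is, locally, equivalent as a $-1$-shifted symplectic derived scheme to a derived critical locus $(\Crit(f:U\ra\bA^1),\om_U)$ with $U$ smooth affine and $\d f\vert_{\Crit(f)}=0$; for stacks one first passes to a smooth atlas. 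On such a chart the cotangent complex is the two-term complex $[\,\bT_U\ra\bL_U\,]$ given by the Hessian of $f$, so its determinant satisfies $K_{\Crit(f)}\cong(K_U\vert_{\Crit(f)})^{\ot^2}$, and the classical truncation carries the motivic vanishing cycle $MF^{\rm mot,\phi}_{U,f}$ of \cite{DeLo} (for (a)) and the perverse sheaf of vanishing cycles $PV^\bu_{U,f}$ of \cite[\S 2.4]{BBDJS} (for (b)). The point to extract is that each local invariant is canonical only up to a $\Z_2$-torsor --- the orientation sheaf of the virtual canonical bundle in case (b), and the residual $\Z_2$-ambiguity inherent in the motivic vanishing cycle in case (a) --- and that in both cases this torsor is the torsor of square roots of $K_{\Crit(f)}$.

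The second, and hardest, step is the \emph{comparison on overlaps}. If two Darboux charts $(U_1,f_1),(U_2,f_2)$ present $(\bS,\om)$ near a common point, one shows (again following \cite{BBBJ,BBJ}) that they are identified after \emph{stabilising}, i.e.\ replacing $(U_i,f_i)$ by $(U_i\t\bA^{2n_i},f_i+z_1w_1+\cdots+z_{n_i}w_{n_i})$ and applying a change of coordinates. A Thom--Sebastiani argument --- classical for perverse sheaves, and for motivic vanishing cycles the more delicate one of Denef--Loeser --- then yields a canonical isomorphism of the \emph{twisted} local invariants, under which the square-root twist transforms through the natural isomorphism $(K_{U_1}\vert_{\Crit(f_1)})^{\ot^2}\cong K_\bS\vert_{\text{chart}}\cong(K_{U_2}\vert_{\Crit(f_2)})^{\ot^2}$. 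I would then verify that these comparison isomorphisms are coherent on triple overlaps; this is the technical heart of the argument, carried out in \cite{BBDJS} for (b) and \cite{BJM} for (a), and it is precisely what allows the local invariants to descend.

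Granting the overlap comparison, the final step is gluing. For (b), a genuine square root $K_\bS^{1/2}$ trivialises the $\Z_2$-gerbe of local square roots, hence supplies gluing isomorphisms between the $PV^\bu_{U_\al,f_\al}$ on double overlaps satisfying the cocycle condition by the previous step; descent then produces a perverse sheaf $P^\bu_{\bS,\om}$ on $S=t_0(\bS)$ with the stated local model, and naturality in the triple $(\bS,\om,K_\bS^{1/2})$ follows by applying the same descent to equivalences. For (a), the motivic vanishing cycle is already invariant under the residual $\Z_2$ up to \emph{canonical} isomorphism, so only the isomorphism class $[K_\bS^{1/2}]$ is needed to glue, and one obtains $MF_{\bS,\om}\in\Mot_S$ with the asserted local model. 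In both cases the stack version is obtained by a further routine descent along the smooth atlas, once the scheme case and its functoriality are in hand. The main obstacle throughout is the second step: identifying the $\Z_2$-twist precisely with square roots of $K_\bS$, and checking that the Thom--Sebastiani comparison isomorphisms are canonical and mutually compatible on overlaps --- essentially all the work of \cite{BBBJ,BBDJS,BJM} is concentrated there.
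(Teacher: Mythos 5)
Theorem \ref{od4thm3} is not proved in this paper at all: it is stated as a summary of results from \cite{BBBJ,BBJ,BBDJS,BJM,Joyc}, so there is no internal proof to compare against. Your sketch is an accurate pr\'ecis of the strategy of those cited works (the $-1$-shifted Darboux theorem giving critical charts, Thom--Sebastiani comparisons after stabilisation, and gluing twisted vanishing-cycle objects using square roots of $K_{\bS}$, with the correct distinction that the motivic case (a) needs only the isomorphism class $[K_\bS^{1/2}]$ while the perverse-sheaf case (b) needs an actual square root to satisfy the cocycle condition), with only minor imprecisions --- e.g.\ the orientations in \cite{BBDJS} are square roots of $K_\bS\vert_{S^{\rm red}}$, and the stack case in \cite{BBBJ} is rather more than a routine atlas descent.
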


\begin{rem}
\label{od4rem5}
{\bf(On generalizations of Donaldson--Thomas theory of Calabi--Yau 3-folds.)} The programme of \cite{BBBJ,BBJ,BBDJS,BJM,Joyc} was aimed in part at extending Donaldson--Thomas theory, and we now explain this.
\smallskip

\noindent{\bf(a) Classical Donaldson--Thomas invariants.} The {\it Donaldson--Thomas invariants\/} $DT^\al(\tau)$ of a Calabi--Yau 3-fold $X$ are integers or rational numbers `counting' moduli spaces $\M^\al_{\rm ss}(\tau)$ (as a scheme or Artin stack) of $\tau$-semistable coherent sheaves on $X$ with Chern character $\al$, for $\tau$ a suitable stability condition (e.g.\ Gieseker stability). They are unchanged under deformations of $X$. They were proposed by Donaldson and Thomas \cite{DoTh}, and defined by Thomas \cite{Thom1} in 1998 when $\M^\al_{\rm st}(\tau)=\M^\al_{\rm ss}(\tau)$, that is, when $\M^\al_{\rm ss}(\tau)$ contains no strictly semistables, and by Joyce and Song \cite{JoSo} in 2008 in the general case.

The $DT^\al(\tau)$ satisfy additive and multiplicative identities, including a wall-crossing formula \cite[Th.~5.18]{JoSo} under change of stability condition~$\tau$.

\smallskip

\noindent{\bf(b) Extension to motivic invariants.} In a seminal 2005 paper, Behrend \cite{Behr} showed that when $\M^\al_{\rm st}(\tau)=\M^\al_{\rm ss}(\tau)$ we may write $DT^\al(\tau)$ as a weighted Euler characteristic $\chi(\M^\al_{\rm ss}(\tau),\nu)$, where $\nu:\M^\al_{\rm ss}(\tau)\ra\Z$ is a constructible function, the {\it Behrend function}. This showed $DT^\al(\tau)$ has a motivic nature.

Here a {\it motivic invariant\/} is a function $\mu:\{$finite type $\C$-schemes$\}\ra\Mot$ taking values in a commutative ring $\Mot$, satisfying $\mu(Y)=\mu(X)+\mu(Y\sm X)$ for $X\subset Y$ closed and $\mu(Y\t Z)=\mu(Y)\mu(Z)$. Examples include the Euler characteristic $\chi$ (with $\Mot=\Z$), and virtual Poincar\'e and Hodge polynomials. 

In 2008, Kontsevich and Soibelman \cite{KoSo1} refined Donaldson--Thomas invariants $DT^\al(\tau)$, taking values in $\Z$ or $\Q$, to {\it motivic Donaldson--Thomas invariants\/} $DT^\al_{\rm mot}(\tau)$ taking values in a commutative ring $\Mot$. In Behrend's formula $DT^\al(\tau)=\chi(\M^\al_{\rm ss}(\tau),\nu)$, they wanted to replace the Euler characteristic $\chi$ by other motivic invariants. They invented the notion of {\it orientation data\/} on $\coh(X)$ \cite[\S 5]{KoSo1}, which they needed to define~$DT^\al_{\rm mot}(\tau)$.

Kontsevich and Soibelman's theory \cite{KoSo1} depended on a number of conjectures and sketch proofs. One of these conjectures --- the existence of orientation data --- we proved in Theorem \ref{od4thm1}. After Pantev--To\"en--Vaqui\'e--Vezzosi \cite{PTVV} in 2011, the theory could be rewritten using $-1$-shifted symplectic structures. Theorem \ref{od4thm3}(a) gives a new definition of $DT^\al_{\rm mot}(\tau)$, resolving some issues in~\cite{KoSo1}. 

Just to define the $DT^\al_{\rm mot}(\tau)$, one needs an isomorphism class $[K_\M^{1/2}]$ of (constructible) square roots $K_\M^{1/2}$ of $K_\M$, by \eq{od4eq7} and Theorem \ref{od4thm3}(a). To make the $DT^\al_{\rm mot}(\tau)$ satisfy multiplicative identities in $\Mot$, including the wall-crossing formula \cite[\S 2.3]{KoSo1} under change of stability condition, Kontsevich and Soibelman also needed an isomorphism $\la_\M$ in \eq{od4eq6} satisfying~$\la_\M\ot\la_\M=\ka_\M$.

Meinhardt \cite{Mein} gives an introduction to motivic Donaldson--Thomas theory.

\smallskip

\noindent{\bf(c) Categorification using perverse sheaves.} Behrend's 2005 paper \cite{Behr} also motivated a second generalization of Donaldson--Thomas theory. He observed that following a heuristic argument of Thomas \cite{Thom1}, one expects Calabi--Yau 3-fold moduli schemes $\M^\al_{\rm ss}(\tau)$ to be locally modelled on critical loci $\Crit (f:U\ra{\mathbb A}^1)$, and if $\M^\al_{\rm ss}(\tau)$ is of this local form then $\nu$ is the pointwise Euler characteristic $\chi_{PV_{U,f}^\bu}$ of the perverse sheaf of vanishing cycles~$PV_{U,f}^\bu$.

This suggested the conjecture that there should exist a perverse sheaf $P^\bu_{\M^\al_{\rm ss}(\tau)}$ on $\M^\al_{\rm ss}(\tau)$, locally modelled on $PV_{U,f}^\bu$ when $\M^\al_{\rm ss}(\tau)$ is locally modelled on $\Crit f$, with $\chi_{P^\bu_{\M^\al_{\rm ss}(\tau)}}=\nu$. This would imply that when $\M^\al_{\rm st}(\tau)=\M^\al_{\rm ss}(\tau)$
\begin{equation*}
DT^\al(\tau)=\chi(\M^\al_{\rm ss}(\tau),\nu)=\sum_{i\in\Z}(-1)^i\dim\bH^i(P^\bu_{\M^\al_{\rm ss}(\tau)}).	
\end{equation*}
Thus, the hypercohomology $\bH^*(P^\bu_{\M^\al_{\rm ss}(\tau)})$, thought of as some kind of cohomology of $\M^\al_{\rm ss}(\tau)$, would be a graded vector space categorifying the Donaldson--Thomas invariant $DT^\al(\tau)$, giving a new interpretation of $DT^\al(\tau)$ as a dimension. This conjecture is proved in Theorem \ref{od4thm3}(b), but requires a square root $K_{\M^\al_{\rm ss}(\tau)}^{1/2}$ on $\M^\al_{\rm ss}(\tau)$ to define the perverse sheaf~$P^\bu_{\M^\al_{\rm ss}(\tau)}$.
\smallskip

\noindent{\bf(d) Cohomological Hall algebras.} In 2010, Kontsevich and Soibelman \cite{KoSo3} made another important contribution to Donaldson--Thomas theory, by introducing {\it Cohomological Hall Algebras\/} ({\it CoHAs\/}). They did this only for representations of quivers with superpotential, a simple class of 3-Calabi--Yau categories, but it is clear that one should try to generalize it to Calabi--Yau 3-folds.
 
Expressed using shifted symplectic geometry, rather than Kontsevich and Soibelman's original picture, the idea is this. Let $(\bcM,\om)$ be the $-1$-shifted derived moduli stack of objects in $\coh(X)$ from Remark \ref{od4rem4}(f),(i). Suppose we are given a square root $K_\cM^{1/2}$ on $\M$. Then \eq{od4eq7} and Theorem \ref{od4thm3}(b) give a perverse sheaf $P_\bcM^\bu$ on $\cM$, with hypercohomology $\bH^*(P_\bcM^\bu)$. The goal is to define a multiplication $\star$ on $\bH^*(P_\bcM^\bu)$, making it into an associative algebra. In String Theory, this should be interpreted as an `algebra of BPS states'.

The rough idea for doing this, for $\cExact,\pi_i$ as in Definition \ref{od4def2}, is to construct a morphism in $D^b_c(\cExact)$, with an associativity property: 
\e
\mu:\pi_1^!(P_\bcM^\bu)\ot\pi_3^!(P_\bcM^\bu)\longra \pi_2^!(P_\bcM^\bu)[\text{local integer shift}],
\label{od4eq9}
\e
and then for $\al,\be\in\bH^*(P_\bcM^\bu)$ (ignoring certain properness issues) to define
\begin{equation*}
\al\star\be= (\pi_2)_!\ci\mu(\pi_1^!(\al)\ot \pi_3^!(\be)).
\end{equation*}

To construct the morphism $\mu$ in \eq{od4eq9}, we need two things. Firstly, we need to prove a conjecture by the first author, partially stated in Amorim and Ben-Bassat \cite[\S 5.3]{AmBB}, which associates hypercohomology classes of perverse sheaves to oriented $-1$-shifted Lagrangians in oriented $-1$-shifted symplectic derived $\C$-stacks, and apply the conjecture to \eq{od4eq8}. And secondly, we need a choice of morphism $\la_\M$ in \eq{od4eq6} with~$\la_\M\ot\la_\M=\ka_\M$.

For $\mu$ to define an associative multiplication on $\bH^*(P_\bcM^\bu)$, this $\la_\M$ must satisfy an associativity condition. An argument similar to Proposition \ref{od4prop1} shows this is equivalent to \eq{od3eq27} commuting for $\xi_\M=\Xi^*(\la_\M)$. Hence $(K_\M^{1/2},\xi_\M)$ is a strong spin structure on $\M$ compatible with direct sums in the sense of Definition \ref{od3def3}. That is, $(K_\M^{1/2},\xi_\M)$ is {\it strong orientation data\/} on $\cM$, as in Definition \ref{od4def3}. Thus, strong orientation data is essential for extending the Cohomological Hall Algebra programme of \cite{KoSo3} to Calabi--Yau 3-folds. A positive answer to Question \ref{od3quest1} would allow us to construct such strong orientation data.
\end{rem}

\section{Proof of Theorem \ref{od3thm2}}
\label{od5}

We will deduce Theorem \ref{od3thm2} from Theorem \ref{od2thm1} in a somewhat formal way, following the outline of Cao--Gross--Joyce \cite[\S 3.4]{CGJ}. The remaining task is to set up a picture for spin structures over H-spaces resembling the theory for orientations in \cite{CGJ}. Our construction of algebraic spin structures will pass through spin structures on the topological realization $\oM{}^\top$ of the stack $\oM$, which we show are equivalent data. The passage from the algebraic geometry of coherent sheaves to the differential geometry of connections on vector bundles is best taken beginning with the stack $\T$ of algebraic vector bundles equipped with global generating sections, since these have both a natural Hermitian metric and holomorphic structure, so a unique compatible Chern connection. We then show that compatible spin structures on the H-spaces $\oM{}^\top$ and $\T{}^\top$ are equivalent, using that one is a homotopy-theoretic group completion of the other.\medskip

We first recall background material on H-spaces, see Hatcher~\cite[\S 3.C]{Hatc}.

\begin{dfn}
\label{od5def1}
An {\it H-space} is a triple $(X,e_X,\mu_X)$ where $X$ is a topological space, $e_X\in X$ is a base-point, and $\mu_X\colon X\t X \ra X$ is a continuous map such that $\mu_X(e_X,\cdot)\simeq\id_X$ and $\mu_X(\cdot,e_X)\simeq\id_X$. We always require the H-space multiplication to be associative and commutative up to homotopy.

An {\it H-map} $f:(X,e_X,\mu_X)\ra(Y,e_Y,\mu_Y)$ is a continuous map $f:X\ra Y$ such that $f(e_X)\simeq e_Y$ and~$f\ci\mu_X\simeq \mu_Y\ci(f\t f):X\t X\ra Y$.
\end{dfn}

\begin{dfn}
\label{od5def2}
Let $(X,e_X,\mu_X)$ be an H-space. Then the homology $H_*(X,\Z_2)$ comes equipped with the associative, graded commutative {\it Pontryagin product}
\begin{equation*}
H_p(X,\Z_2) \ot H_q(X,\Z_2) \stackrel{\bt}{\longra} H_{p+q}(X\t X,\Z_2) \xrightarrow{H_{p+q}(\mu_X)} H_{p+q}(X,\Z_2).
\end{equation*}
Write $\pi_0(X) \ra H_0(X,\Z_2)$, $\al\mapsto 1_\al$ for the natural inclusion of the basis vectors. Letting $[e_X]\in \pi_0(X)$ denote the component of base-point, $1_{[e_X]}$ is the unit in the {\it Pontryagin ring} $H_*(X,\Z_2)$. For more details, see Dold~\cite[\S VII.3]{Dold}.
\end{dfn}

As our construction of spin structures passes through several intermediate stages, we first extend the terminology of Definition \ref{od3def2} to general H-spaces.

\begin{dfn}
\label{od5def3}
{\bf (a)} Let $K \ra X$ be a complex line bundle.  A {\it square root} of $K$ is an isomorphism class $[J,\jmath]$ of a complex line bundle $J\ra X$ and an isomorphism $\jmath \colon J \ot J \ra K$, where an isomorphism from $(J,\jmath)$ to $(J',\jmath')$ is a bundle isomorphism $\io\colon J\ra J'$ with $\jmath=\jmath'\circ(\io\ot\io)$.\smallskip

\noindent
{\bf (b)} Let $(X,e_X,\mu_X)$ be an H-space. Assume $K=\De_X^*(L)$ for a complex line bundle $L\ra X\t X$, and that there are isomorphisms, as in \eqref{od3eq16}--\eqref{od3eq18},
\ea
\phi&:\pi_1^*(K)\ot\pi_2^*(K)\ot L^{\ot^2}\longra\mu_X^*(K),\label{od5eq1}\\
\chi&:\pi_{13}^*(L)\ot\pi_{23}^*(L)\longra(\mu_X\t\id_X)^*(L),\label{od5eq2}\\
\psi&:\pi_{12}^*(L)\ot\pi_{13}^*(L)\longra(\id_X\t\mu_X)^*(L).\label{od5eq3}
\ea
We suppose also that for some choice of homotopy $h: \mu_X\circ(\mu_X\t\id_X) \simeq \mu_X\circ(\id_X\t\mu_X)$ there exists an isotopy between the two ways round \eqref{od3eq19}, meaning there is a bundle isomorphism
\e
\label{od5eq4}
\ze\colon\pi_1^*(K)\!\ot\!\pi_2^*(K)\!\ot\!\pi_3^*(K)\!\ot\!\pi_{12}^*(L^{\ot^2})\!\ot\!\pi_{13}^*(L^{\ot^2})\!\ot\!\pi_{23}^*(L^{\ot^2})\!\longra\! h^*(K),
\e
restricting over $X\t X\t X\t\{0\}$ to $(\mu_X\t\id_X)^*(\phi)\circ(\pi_{12}^*(\phi)\ot \id_{\pi_3^*(K)}\ot\chi^{\ot^2})$ and over $X\t X\t X\t\{1\}$ to $(\id_X\t\mu_X)^*(\phi)\circ(\id_{\pi_1^*(K)}\ot \pi_{23}^*(\phi)\ot \psi^{\ot^2})$. Here the projections appearing in \eqref{od5eq4} all have the domain $X\t X\t X\t[0,1]$.

Given this setup, we call a square root $[J,\jmath]$ of $K$ {\it compatible} (with respect to $L,\phi,\mu_X$) if there exists an isomorphism
\begin{equation*}
\xi\colon \pi_1^*(J) \ot \pi_2^*(J) \ot L \longra \mu_X^*(J)
\end{equation*}
satisfying $\mu_X^*(\jmath)\circ(\xi\ot\xi)=\phi\circ\bigl(\pi_1^*(\jmath)\ot\pi_2^*(\jmath)\ot\id_{L^{\ot^2}}\bigr)$. We note that this terminology indeed depends only on the isomorphism class $[J,\jmath]$. Of course, the notion of compatible square root is independent of \eqref{od5eq2}--\eqref{od5eq4}, but these additional properties yield a better-behaved theory below. Write $S_K$ for the set of square roots of $K$, and $S_{K,L}^\phi \subset S_K$ for the subset of compatible square roots.
\end{dfn}

Given representatives $(J_1,\jmath_1), (J_2,\jmath_2)$ of square roots of $K$, the square of $J_1\ot_\C J_2^*$ has a given trivialization from $\jmath_1, \jmath_2$ and therefore is equipped with a real structure, so $J_1 = J_2 \ot_\R P$ for a real line bundle $P \ra X$. Up to isomorphism, $P$ depends only on the equivalence classes $[J_1,\jmath_1], [J_2,\jmath_2]$ and so on its Stiefel--Whitney class $w_1(P) \in H^1(X,\Z_2)$. Conversely, any square root may be tensored by isomorphism classes of real line bundles, elements of $H^1(X,\Z_2)$.

Similarly, isomorphism classes of complex line bundles correspond via the first Chern class to elements of $H^2(X,\Z)$. Finding a square root of $K$ is therefore equivalent to factoring $c_1(K) = 2\cdot x$ for $x \in H^2(X,\Z)$. Applying the Bockstein exact sequence $H^2(X,\Z) \xrightarrow{2\cdot} H^2(X,\Z) \ra H^2(X,\Z_2)$ then proves the following.

\begin{prop}\label{od5prop1}
Let\/ $K \ra X$ be a complex line bundle.
\begin{enumerate}
\item[{\bf (a)}]
The bundle\/ $K$ admits a square root if and only if the mod\/ $2$ reduction\/ $\mathrm{o}_K \in H^2(X,\Z_2)$ of its first Chern class\/ $c_1(K)$ vanishes.
\item[{\bf (b)}]
If\/ $K$ admits a square root, then the set\/ $S_K$ of square roots of\/ $K$ is a torsor over the group\/ $H^1(X,\Z_2)$.
\end{enumerate}
\end{prop}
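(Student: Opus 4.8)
The plan is to reduce both parts to the classification of line bundles by characteristic classes together with the Bockstein sequence, exactly as indicated in the paragraph preceding the statement. Throughout I would use that $X$ has the homotopy type of a CW complex, so that isomorphism classes of complex line bundles on $X$ correspond via $c_1$ to $H^2(X,\Z)$, and isomorphism classes of real line bundles correspond via $w_1$ to $H^1(X,\Z_2)$; in particular two complex line bundles with equal first Chern class are isomorphic, and likewise for real line bundles with equal first Stiefel--Whitney class.

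For \textbf{(a)}: a square root of $K$ is a pair $(J,\jmath)$ with $\jmath\colon J^{\ot^2}\ra K$. Such a $J$ exists exactly when the equation $c_1(K)=2\,c_1(J)$ is solvable in $H^2(X,\Z)$, i.e.\ when $c_1(K)\in 2\cdot H^2(X,\Z)$; and once $J$ exists with $J^{\ot^2}\cong K$, any isomorphism may be taken as $\jmath$, since two complex line bundles with equal first Chern class are isomorphic. So $K$ admits a square root iff $c_1(K)\in 2H^2(X,\Z)$, and by exactness of $H^2(X,\Z)\xrightarrow{\,2\cdot\,}H^2(X,\Z)\ra H^2(X,\Z_2)$ this is equivalent to vanishing of the image $\mathrm{o}_K\in H^2(X,\Z_2)$ of $c_1(K)$.

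For \textbf{(b)}: assume $S_K\ne\es$. I would build a free transitive action of $H^1(X,\Z_2)$ on $S_K$ as follows. Given $w\in H^1(X,\Z_2)$, choose a real line bundle $P\ra X$ with $w_1(P)=w$ (unique up to isomorphism), and send $[J,\jmath]$ to $[J\ot_\R P,\ \jmath\ot t_P]$, where $t_P\colon(J\ot_\R P)^{\ot^2}\cong J^{\ot^2}\ot_\R(P\ot_\R P)\ra K$ uses the canonical trivialization of $P\ot_\R P$ (well defined up to a positive scalar, which does not affect the isomorphism class). Next I would check that this descends to isomorphism classes and gives a group action, using $w_1(P_1\ot_\R P_2)=w_1(P_1)+w_1(P_2)$. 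For transitivity, given $[J_1,\jmath_1],[J_2,\jmath_2]$ the complex line bundle $J_1\ot_\C J_2^*$ satisfies $(J_1\ot_\C J_2^*)^{\ot^2}\cong J_1^{\ot^2}\ot(J_2^{\ot^2})^*\cong K\ot K^*\cong\O_X$ canonically using $\jmath_1,\jmath_2$; this trivialization of its square equips $J_1\ot_\C J_2^*$ with a real structure, so $J_1\ot_\C J_2^*\cong P\ot_\R\C$ for a real line bundle $P$, whence $J_1\cong J_2\ot_\R P$ and, tracking trivializations, $[J_1,\jmath_1]=w_1(P)\cdot[J_2,\jmath_2]$. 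For freeness, if $[J_0\ot_\R P,\jmath_0\ot t_P]=[J_0,\jmath_0]$ then the comparison isomorphism is compatible with the two induced real structures, hence trivializes $P$ and forces $w_1(P)=0$. This makes $S_K$ an $H^1(X,\Z_2)$-torsor.

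The verifications above are routine bookkeeping; the one point needing care---and effectively the heart of~\textbf{(b)}---is the standard fact, already invoked in the paragraph preceding the statement, that a complex line bundle equipped with a trivialization of its tensor square is canonically the complexification of a real line bundle, namely the subbundle of vectors whose square maps to a nonnegative real multiple of the trivializing section. This is the mechanism that converts the ``difference'' of two square roots of $K$ into a well-defined class in $H^1(X,\Z_2)$.
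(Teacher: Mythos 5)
Your proof is correct and follows exactly the same route as the paper: part (a) via $c_1$ and the Bockstein exact sequence, part (b) via the observation that the ``difference'' of two square roots is a real line bundle classified by $w_1\in H^1(X,\Z_2)$, with the key mechanism being that a trivialization of the tensor square of a complex line bundle endows it with a real structure. You have simply written out the torsor verification (well-definedness, transitivity, freeness) in more detail than the paper's one-paragraph sketch.
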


\begin{rem}
\label{od5rem1}
The obstruction class $\mathrm{o}_K$ can be expressed geometrically as follows. Viewing $K$ as a principal $\C^*$-bundle, define $\de\colon K\t_X K \ra \C^*$ on the fibre product over $X$ by $\de(k_1,k_2)\cdot k_1=k_2$. Define $Q \ra K\t_X K$ as the pullback along $\de$ of the principal $\Z_2$-bundle $\C^* \ra \C^*$, $z\mapsto z^2$. Multiplication of complex numbers defines a map $m\colon Q \t_K Q \ra Q$, $m\bigl((k_1,k_2,z),(k_2,k_3,w)\bigr)=(k_1,k_3,zw)$. Up to isomorphism, square roots of $K$ correspond bijectively to pairs $(P,\eta)$ of a principal $\Z_2$-bundle $P \ra K$ and an isomorphism $\eta\colon P\ot_{\Z_2} P \ra Q$ satisfying the cocycle identity $m\bigl(\eta(p_1,p_2),\eta(p_2,p_3)\bigr)=\eta(p_1,p_3)$, see for example~\cite[Prop.~5.8]{BiUp}.
\end{rem}

For proving the analogue of Proposition \ref{od5prop1} for compatible square roots, we now introduce the {\it Hochschild--Pontryagin cohomology} $HP^{i,j}(X,k)$ of an H-space $X$, which is naturally bigraded. For compatible square roots the groups $HP^{1,1}(X,\Z_2)$ and $HP^{2,1}(X,\Z_2)$ play the role of $H^1(X,\Z_2)$ and $H^2(X,\Z_2)$.

\begin{dfn}
\label{od5def4}
Let $(X,e_X,\mu_X)$ be an H-space, $k$ a field, and write $R=H_*(X,k)$ for the Pontryagin $k$-algebra with its natural algebra homomorphism
\begin{equation*}
\ep\colon R=H_*(X,k) \xrightarrow{\enskip\pi\enskip}\bigoplus\nolimits_{\al \in \pi_0(X)} k \xrightarrow{\enskip\Si\enskip} k,
\end{equation*}
where the first map $\pi$ is the projection onto degree zero. Write $M$ for the ground field $k$ equipped with the $R$-bimodule structure given by multiplication by $\ep$. The {\it Hochschild--Pontryagin complex} of the H-space $X$ has cochain groups $C^n = \Hom_k(R^{\ot_k n}, M)$ and codifferential
\begin{equation*}
\d f(r_0, \ldots, r_n)\! =\! \ep(r_0)f(r_1, \ldots, r_n) \!+\! \sum_{i=1}^{n-1} f(\ldots, r_{i-1}r_i, \ldots) \!+\! f(r_0, \ldots, r_{n-1})\ep(r_n),
\end{equation*}
where $f\in C^n$ and $r_0, \ldots, r_n \in R$. The cohomology $HP^*(X,k)$ of this cochain complex is the {\it Hochschild--Pontryagin cohomology} of the H-space $X$. An H-map $f\colon X \ra Y$ induces an algebra homomorphism of the Pontryagin rings and therefore a map $f^*\colon HP^*(Y,k) \ra HP^*(X,k)$ in a functorial way. Regarding $M$ as a chain complex in degree $0$, $\ep$ has degree $0$ and we find that the Hochschild--Pontryagin cohomology groups $HP^i(X,k)$ are naturally bigraded $HP^i(X,k)=\bigoplus_{j\in \N} HP^{i,j}(X,k)$ by homomorphisms $f$ lowering degree by $j$.
\end{dfn}

For example, $HP^1(X,k)\! =\! \bigl\{f \!\in\! \Hom_k(R,M)\! :\! f(ab)\!=\!\ep(a)f(b) \!+\!f(a)\ep(b)\bigr\}$, and elements of $HP^2(X,k)$ are represented by $f\in \Hom_k(R\ot_k R, M)$ satisfying $\ep(a)f(b,c) - f(ab,c) + f(a,bc) - f(a,b)\ep(c) = 0$ for all $a,b,c \in R$, modulo coboundaries. Like the Pontryagin ring, the Hochschild--Pontryagin cohomology depends on the H-space structure up to homotopy, omitted from the notation.

Hochschild--Pontryagin cohomology is simply the usual Hochschild cohomology $HH^*(R,M)$ as in Weibel \cite[\S 9.1]{Weib} of the Pontryagin ring $R=H_*(X,k)$ with coefficients in $M=k$. In particular, it is a homological functor, independent of the choice of resolution of $R$, where we took the bar resolution. According to \cite[Lem.~9.1.3]{Weib}, we can therefore write Hochschild cohomology as
\e
\label{od5eq5}
HH^i(R,M) = \Ext^i_{R\ot_k R^\mathrm{op}}(R,M).
\e

\begin{prop}
\label{od5prop2}
Work in the situation of Definition\/~{\rm\ref{od5def3}(b),} in particular, with complex line bundles\/ $L \ra X\t X,$ $K=\De_X^*(L),$ and an isomorphism\/ $\phi\colon \pi_1^*(K)\ot\pi_2^*(K)\ot L^{\ot^2}\ra\mu_X^*(K)$. Assume that\/ $K$ admits a square root.
\begin{enumerate}
\item[{\bf (a)}]
There exists an obstruction class\/ $\mathrm{o}_{K,L}^\phi \in HP^{2,1}(X,\Z_2),$ defined in the proof, so that\/ $K$ admits a compatible square root if and only if\/~$\mathrm{o}_{K,L}^\phi=0$.
\item[{\bf (b)}]
If\/ $K$ admits a compatible square root, then the set of compatible square roots\/ $S_{K,L}^\phi$ of\/ $K$ is a torsor over the group\/~$HP^{1,1}(X,\Z_2)$.
\end{enumerate}
\end{prop}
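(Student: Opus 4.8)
The plan is to reduce the existence and classification of compatible square roots to a computation in the bar complex underlying the Hochschild--Pontryagin cohomology, following the template of Cao--Gross--Joyce \cite[\S 3.4]{CGJ}. Recall from Proposition \ref{od5prop1} that square roots of $K$ exist (by hypothesis) and that $S_K$ is a torsor over $H^1(X,\Z_2)$; fix a square root $(J_0,\jmath_0)$. The key observation is that compatibility of a square root $(J,\jmath)$ is detected by a single $\Z_2$-bundle. Define
\[
N_J=\pi_1^*(J)\ot\pi_2^*(J)\ot L\ot\mu_X^*(J)^{-1}
\]
on $X\t X$. Then $N_J^{\ot^2}\cong\pi_1^*(K)\ot\pi_2^*(K)\ot L^{\ot^2}\ot\mu_X^*(K)^{-1}$ is canonically trivialized by $\phi$ together with $\pi_1^*(\jmath),\pi_2^*(\jmath),\mu_X^*(\jmath)$, so $N_J$ acquires the structure of a principal $\Z_2$-bundle, with class $w_1(N_J)\in H^1(X\t X,\Z_2)$. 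Unwinding Definition \ref{od5def3}(b), an isomorphism $\xi\colon\pi_1^*(J)\ot\pi_2^*(J)\ot L\to\mu_X^*(J)$ with $\mu_X^*(\jmath)\ci(\xi\ot\xi)=\phi\ci\bigl(\pi_1^*(\jmath)\ot\pi_2^*(\jmath)\ot\id_{L^{\ot^2}}\bigr)$ is precisely a trivialization of $N_J$ as a $\Z_2$-bundle; hence $(J,\jmath)$ is a compatible square root if and only if $w_1(N_J)=0$.

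I would then track how $w_1(N_J)$ varies over $S_K$. By Proposition \ref{od5prop1}(b) any square root is of the form $J_0\ot P$ for a principal $\Z_2$-bundle $P\ra X$, and since $P^{\ot^2}$ is canonically trivial a direct computation gives
\[
w_1\bigl(N_{J_0\ot P}\bigr)=w_1(N_{J_0})+\d\bigl(w_1(P)\bigr),
\]
where $\d\colon C^{1,1}=H^1(X,\Z_2)\ra C^{2,1}=H^1(X\t X,\Z_2)$ is the Hochschild--Pontryagin codifferential $y\mapsto\pi_1^*(y)+\mu_X^*(y)+\pi_2^*(y)$ of Definition \ref{od5def4}. (Here I use that over $\Z_2$ the Künneth theorem identifies the cochain group $C^{n,1}$ with $H^1(X^{\t n},\Z_2)$, the codifferential being the alternating sum of pullbacks along the bar faces of the H-space structure.) Thus $w_1(N_{J_0})$ has a well-defined class in $C^{2,1}/B^2$, independent of the chosen $J_0$. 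It remains to show $w_1(N_{J_0})$ is a cocycle: writing the image of $N_{J_0}$ under the signed tensor product over the four bar faces $X^{\t 3}\ra X^{\t 2}$ schematically as a product of a ``$\partial\partial J_0$'' factor and a ``$\partial L$'' factor, the former is canonically trivial because $\partial^2=0$ on $\C^*$-valued cochains, while the latter is canonically trivial using the coherence isomorphisms $\chi,\psi$ of \eqref{od5eq2}--\eqref{od5eq3} (the isotopy $\zeta$ of \eqref{od5eq4} is not needed here). Taking $w_1$ yields $\d\,w_1(N_{J_0})=0$, and we set $\mathrm{o}_{K,L}^\phi:=\bigl[w_1(N_{J_0})\bigr]\in HP^{2,1}(X,\Z_2)$.

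The two assertions then follow formally. For (a): if $J_0\ot P$ is compatible then $0=w_1(N_{J_0\ot P})=w_1(N_{J_0})+\d(w_1(P))$, so $w_1(N_{J_0})\in B^2$ and $\mathrm{o}_{K,L}^\phi=0$; conversely, if $\mathrm{o}_{K,L}^\phi=0$ pick $P$ with $w_1(N_{J_0})=\d(w_1(P))$, whence $w_1(N_{J_0\ot P})=0$ and $J_0\ot P$ is compatible. For (b), having fixed a compatible square root $J_0$, the variation formula shows $J_0\ot P$ is compatible iff $w_1(P)\in Z^1=\ker\d$; since $C^{0,1}=H^1(\mathrm{pt},\Z_2)=0$ we have $B^1=0$ and $HP^{1,1}(X,\Z_2)=Z^1$, so tensoring defines a free and transitive action of $HP^{1,1}(X,\Z_2)$ on the nonempty set $S_{K,L}^\phi$, using again Proposition \ref{od5prop1}(b) to identify isomorphism classes. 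The main obstacle is the careful bookkeeping in the second paragraph: checking, compatibly with the canonical square-trivializations, that the four-fold face product of $N_{J_0}$ is trivial and that the variation formula holds on the nose with the Hochschild codifferential, which is where the simplicial identities of the bar construction and the coherence data $\chi,\psi$ enter.
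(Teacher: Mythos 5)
Your construction is essentially the paper's: your $N_J$ is, up to a canonical dualization, the paper's principal $\Z_2$-bundle $Q_J$ of square roots of $\phi$, the variation formula $w_1(N_{J_0\ot P})=w_1(N_{J_0})+\d w_1(P)$ matches the paper's coboundary computation, and your treatment of part (b) (compatible square roots differ by $P$ with $\mu_X^*w_1(P)=\pi_1^*w_1(P)+\pi_2^*w_1(P)$, i.e.\ by an element of $HP^{1,1}(X,\Z_2)=Z^{1,1}$ since $C^{0,1}=0$) is exactly the paper's argument. The genuine gap is in your verification that $w_1(N_{J_0})$ is a Hochschild--Pontryagin cocycle. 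You claim the ``$\partial\partial J_0$'' factor is canonically trivial ``because $\partial^2=0$ on $\C^*$-valued cochains'' and that the isotopy $\ze$ of \eqref{od5eq4} is not needed. This treats the H-space as strictly associative. In the setting of Definition \ref{od5def3}(b) the two triple products $\mu_X\ci(\mu_X\t\id_X)$ and $\mu_X\ci(\id_X\t\mu_X)$ are different maps, only homotopic via $h$, so in the four-fold face product the terms $(\mu_X\ci(\mu_X\t\id_X))^*(J_0)$ and $(\mu_X\ci(\id_X\t\mu_X))^*(J_0)$ do not cancel on the nose; they can only be compared by transport along $h$, and --- since the object whose $w_1$ you are computing is not the bare line bundle but the $\Z_2$-bundle of square roots of the $\phi$-induced trivialization --- this comparison must be compatible with the square-trivializations on both sides. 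That compatibility is precisely the content of \eqref{od5eq4}: the paper's proof builds a $\Z_2$-bundle $Q_J^{\mathrm{ass}}$ over $X\t X\t X\t[0,1]$ using $\ze$, identifies its restrictions at $t=0,1$ with $(\mu_X\t\id_X)^*(Q_J)\ot\pi_{12}^*(Q_J)$ and $(\id_X\t\mu_X)^*(Q_J)\ot\pi_{23}^*(Q_J)$ via $\chi$ and $\psi$, and deduces $\d w_1(Q_J)=0$. Indeed Proposition \ref{od5prop4}(c) states explicitly that the full structure of Definition \ref{od5def3}(b) is needed exactly for this cocycle property.

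Homotopy invariance of pullbacks on $H^1$ does not rescue your argument, because $w_1(N_{J_0})$ is not being pulled back along the two homotopic maps; rather, the two trivializations of the squared bundles, compared via $h$-transport and $\chi,\psi$, differ by a $\C^*$-valued function on $X\t X\t X$, and $\d w_1(N_{J_0})$ is its mod $2$ winding class. If one drops the hypothesis \eqref{od5eq4}, this winding class need not vanish: for instance, replacing $\phi$ by $f\cdot\phi$ for a map $f\colon X\t X\ra\C^*$ changes $w_1(N_{J_0})$ by the mod $2$ winding class of $f$ and changes $\d w_1(N_{J_0})$ by its coboundary, which is generically nonzero, while $\chi,\psi$ and the square root of $K$ are unaffected; only the existence of $\ze$ is destroyed. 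So your claim that $\ze$ is dispensable is not merely unproven but false in general, and the cocycle step of your proof must be repaired by invoking \eqref{od5eq4} as in the paper.
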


\begin{proof}
{\bf (a)} Let $[J,\jmath]$ be a square root of $K$. Define a principal $\Z_2$-bundle $Q_J$ over $X\t X$ with fibres ${Q_J}|_{(x,y)} = \bigl\{ \xi_{x,y}\in \Hom_\C\big(J_x \ot J_y \ot L_{x,y}, J_{\mu_X(x,y)}\big) : \mu_X^*(\jmath)|_{(x,y)}\circ \xi^{\ot^2}_{x,y} = \phi\circ(\pi_1^*(\jmath)\ot\pi_2^*(\jmath)\ot\id_{L^{\ot^2}})|_{(x,y)} \bigr\}$. Since an isomorphism $\iota\colon (J,\jmath) \ra (J',\jmath')$ induces an isomorphism $Q_J \ra Q_{J'}$, the isomorphism class $f=w_1(Q_J) \in H^1(X\t X,\Z_2)$ depends only on $[J,\jmath]$. Moreover, $[J,\jmath]$ is compatible if and only if $Q_J$ is trivial, so when $f=0$.

We next verify that $f$ is a cocycle for a Hochschild--Pontryagin cohomology class. According to Definition \ref{od5def3}(b), there exists an isomorphism $\ze$ as in \eqref{od5eq4}, for an appropriate associativity homotopy $h\colon \mu_X\circ(\mu_X\t\id_X)\simeq \mu_X\circ(\id_X\t\mu_X)$. Using this, we can define a principal $\Z_2$-bundle $Q^\mathrm{ass}_J$ over $X\t X\t X\t[0,1]$ of those $\xi_{x,y,z,t} \in \Hom_\C\bigl( J_x\ot J_y\ot J_z\ot L_{x,y}\ot L_{x,z}\ot L_{y,z}, h_t^*(J)|_{(x,y,z)}\bigr)$ with square $h_t^*(\jmath)|_{(x,y,z)}\circ\xi_{x,y,z,t}^{\ot^2} = \ze\circ\bigl(\jmath|_x\ot\jmath|_y\ot\jmath|_z\ot\id_{L_{x,y}^{\ot^2}\ot L_{x,z}^{\ot^2}\ot L_{y,z}^{\ot^2}}\bigr)$ as fibre over $(x,y,z,t)$. There is an isomorphism $(\mu_X\t\id_X)^*(Q_J)\ot_{\Z_2} \pi_{12}^*(Q_J)\cong Q_J^\mathrm{ass}|_{X\t X\t X\t\{0\}}$, defined on elementary tensors $\xi_{(x,y),z}\ot \xi_{x,y}$ by composing $\xi_{(x,y),z}\colon J_{\mu(x,y)}\ot J_z\ot L_{\mu(x,y),z}\ra J_{\mu(\mu(x,y),z)}$, $\xi_{x,y}\colon J_x\ot J_y\ot L_{x,y}\ra J_{\mu(x,y)}$, and $\chi|_{(x,y,z)}\colon L_{x,z}\ot L_{y,z}\ra L_{\mu(x,y),z}$ from \eqref{od5eq2} to get an element of $Q_J^\mathrm{ass}|_{X\t X\t X\t\{0\}}$ with correct square. Similarly, $Q_J^\mathrm{ass}|_{X\t X\t X\t\{1\}} \cong (\id_X\t\mu_X)^*(Q_J)\ot_{\Z_2} \pi_{23}^*(Q_J)$ using $\psi$ from \eqref{od5eq3}. Hence $(\mu_X\t\id_X)^*(Q_J)\ot_{\Z_2} \pi_{12}^*(Q_J) \cong (\id_X\t\mu_X)^*(Q_J)\ot_{\Z_2} \pi_{23}^*(Q_J)$, non-canonically. Taking the first Stiefel--Whitney class of line bundles takes `$\ot_{\Z_2}$' to `$+$', so $\d f=0$ for $f=w_1(Q_J)$ in the notation of Definition~\ref{od5def4}.

The general square root of $K$ is obtained by tensoring $J$ with an arbitrary principal $\Z_2$-bundle $P \ra X$. This replaces $Q_J$ by $Q_J\ot\mu_X^*(P)\ot\pi_1^*(P^*)\ot\pi_2^*(P^*)$ and $f=w_1(Q_J)$ by a $\d$-coboundary. Therefore the image of $w_1(Q_J) \in H^1(X\t X,\Z_2) = \Hom(H_1(X,\Z_2)^{\ot2},\Z_2)$ in Hochschild--Pontryagin cohomology $HP^{2,1}(X,\Z_2)$, the {\it obstruction class} $\mathrm{o}_{K,L}^\phi$, is independent of $[J,\jmath]$.

\noindent{\bf (b)} Any two compatible square roots $[J_1,\jmath_1], [J_2,\jmath_2]$ of $K$ differ by an arbitrary isomorphism class $w_1(P)$ of a real line bundle with the additional property $\mu_X^*w_1(P)=\pi_1^*w_1(P) + \pi_2^*w_1(P)$, so by a class $w_1(P) \in HP^{1,1}(X,\Z_2) \subset \Hom_{\Z_2}(H_1(X),\Z_2)=H^1(X,\Z_2)$ in Hochschild--Pontryagin cohomology.
\end{proof}

\begin{rem}
\label{od5rem2}
Combining Propositions \ref{od5prop1} and \ref{od5prop2}, we see that $\mathrm{o}_K \in H^2(X,\Z_2)$ is the {\it primary obstruction} for the existence of a square root. Assuming the existence of $\phi\colon \pi_1^*(K)\ot\pi_2^*(K)\ot L^{\ot^2}\ra\mu_X^*(K)$, we find that the primary obstruction actually belongs to the Hochschild--Pontryagin subgroup $HP^{1,2}(X,\Z_2)$. This is because $\mu_X^*c_1(K) \equiv \pi_1^* c_1(K) + \pi_2^* c_1(K) \pmod 2$, as $c_1(L^{\ot^2})\equiv 0 \pmod 2$. When the primary obstruction vanishes, the {\it secondary obstruction} $\mathrm{o}_{K,L}^\phi \in HP^{2,1}(X,\Z_2)$ to the existence of a compatible square root is defined.
\end{rem}

The next definition comes from Caruso et al.~\cite[\S 1]{CCMT}.

\begin{dfn}
\label{od5def5}
A {\it homotopy-theoretic group completion\/} of an H-space $(X,\mu_X)$ is an H-map $f:(X,\mu_X)\ra(Y,\mu_Y)$ to a grouplike H-space $(Y,\mu_Y)$ such that:
\begin{itemize}
\setlength{\itemsep}{0pt}
\setlength{\parsep}{0pt}
\item[(i)] The map on connected components $\pi_0(f)\colon \pi_0(X) \ra \pi_0(Y)$ is the group completion of the abelian monoid $\pi_0(X)$.
\item[(ii)] The map $H_*(f)\colon H_*(X,k)\ra H_*(Y,k)$ is localization by the central multiplicative set $\pi_0(X)\subset H_*(X,k)$, for all fields $k$.
\end{itemize}
\end{dfn}

\begin{prop}
\label{od5prop3}
Let\/ $f:(X,\mu_X)\ra(Y,\mu_Y)$ be a homotopy-theoretic group completion of H-spaces. Then the induced map\/ $f^*\colon HP^{i,j}(Y,k) \ra HP^{i,j}(X,k)$ in Hochschild--Pontryagin cohomology is an isomorphism for all\/ $i,j \in \N$.
\end{prop}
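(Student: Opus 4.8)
The plan is to reduce this to a purely algebraic statement about Hochschild cohomology and localization, via the identification \eqref{od5eq5}. Write $R=H_*(X,k)$, $R'=H_*(Y,k)$, $R^{\mathrm e}=R\ot_kR^{\mathrm{op}}$, $R'^{\mathrm e}=R'\ot_kR'^{\mathrm{op}}$, and let $M=k$ carry the bimodule structures over $R$ and over $R'$ given by $\ep$ as in Definition \ref{od5def4}; then $HP^*(X,k)=\Ext^*_{R^{\mathrm e}}(R,M)$ and $HP^*(Y,k)=\Ext^*_{R'^{\mathrm e}}(R',M)$ by \eqref{od5eq5}, and $f^*$ is the restriction map along the algebra homomorphism $H_*(f)\colon R\ra R'$. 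By Definition \ref{od5def5}(ii), $H_*(f)$ is the localization of $R$ at the central multiplicative set $S=\pi_0(X)$. Since $\ep=\Si\ci\pi$ satisfies $\ep(s)=1$ for every $s\in S$, the set $S$ acts on $M$ as the identity from both sides; hence $M$ is canonically an $R'^{\mathrm e}$-module, and its $R^{\mathrm e}$-module structure is the restriction along the ring map $R^{\mathrm e}\ra R'^{\mathrm e}$. Let $U\subset R^{\mathrm e}$ be the multiplicative set generated by $\{s\ot1,\,1\ot s:s\in S\}$; as $S$ is central, $U$ is central, $U^{-1}(R^{\mathrm e})=R'^{\mathrm e}$, and $U^{-1}R=S^{-1}R=R'$ as an $R'^{\mathrm e}$-module.

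Next I would take any projective resolution $P_\bu\ra R$ over $R^{\mathrm e}$, say the bar resolution underlying the complex of Definition \ref{od5def4}. Localization at $U$ is exact and preserves projectives, so $U^{-1}P_\bu\ra U^{-1}R=R'$ is a projective resolution of $R'$ over $R'^{\mathrm e}$. Because $M$ already is an $R'^{\mathrm e}$-module, the tensor–hom adjunction for extension and restriction of scalars along $R^{\mathrm e}\ra R'^{\mathrm e}$ gives a natural isomorphism of cochain complexes
\begin{equation*}
\Hom_{R'^{\mathrm e}}\bigl(U^{-1}P_\bu,M\bigr)\;\cong\;\Hom_{R^{\mathrm e}}(P_\bu,M),
\end{equation*}
with no finiteness of $R$ over $R^{\mathrm e}$ needed, precisely because the coefficient module $M$ is unchanged by $U^{-1}$. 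Taking cohomology identifies $\Ext^*_{R'^{\mathrm e}}(R',M)$ with $\Ext^*_{R^{\mathrm e}}(R,M)$. To check that this identification is $f^*$, I would choose $P_\bu$ to be the bar resolution $B_\bu(R)$: the chain map $B_\bu(R)\ra B_\bu(R')$ induced by $H_*(f)$ factors as $B_\bu(R)\ra U^{-1}B_\bu(R)\ra B_\bu(R')$, the second arrow a comparison of projective resolutions of $R'$ and hence a homotopy equivalence, and under the adjunction above precomposition with these two maps is exactly $f^*$ at cochain level. Hence $f^*\colon HP^*(Y,k)\ra HP^*(X,k)$ is an isomorphism.

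Finally, $H_*(f)$ is a graded algebra homomorphism of internal degree $0$, so the induced map on Hochschild complexes preserves the internal grading by which $HP^i(X,k)=\bigop_{j\in\N}HP^{i,j}(X,k)$ is defined; therefore $f^*$ restricts to isomorphisms $HP^{i,j}(Y,k)\xrightarrow{\ \sim\ }HP^{i,j}(X,k)$ for all $i,j\in\N$.

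The only real obstacle is bookkeeping rather than conceptual: $R$ is typically not finitely presented (or even finitely generated) over $R^{\mathrm e}$, so one cannot simply invoke the general ``$\Ext$ commutes with flat base change'' principle, and the argument must instead be run through resolutions directly, exploiting the special feature $U^{-1}M=M$. Verifying that the resulting comparison isomorphism genuinely coincides with the map $f^*$ induced by the H-map $f$ is the one point that requires a little care.
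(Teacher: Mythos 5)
Your proposal is correct and follows essentially the same route as the paper: both rewrite Hochschild--Pontryagin cohomology as $\Ext^*_{R^{\mathrm e}}(R,M)$ via \eqref{od5eq5}, observe that $S$ acts trivially on the coefficient bimodule $M$ so that localization at $U=S\times S^{\mathrm{op}}$ leaves $M$ unchanged, localize a free/projective resolution, and identify the resulting Hom complexes. The one small addition you make beyond the paper's proof is the explicit check (via the factorization $B_\bu(R)\ra U^{-1}B_\bu(R)\ra B_\bu(R')$) that the comparison isomorphism really is the map $f^*$ induced by the H-map $f$; the paper leaves this implicit.
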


\begin{proof}
Write $R=H_*(X,\Z_2)$ for the Pontryagin ring, $M$ for the $R$-bimodule $\Z_2$ via $\ep\colon R \ra \Z_2$, and $S=\pi_0(X) \subset R$. By assumption, the central localization $S^{-1}R$ can be identified with Pontryagin ring $H_*(Y,\Z_2)$ and the localized $S^{-1}R$-bimodule on $S^{-1}M$ with $\Z_2$ with the action $\ep\colon H_*(Y,\Z_2) \ra \Z_2$. We claim in greater generality that for any $k$-algebra $R$, $R$-bimodule $M$, and central $S\subset R$ acting trivially on $M$, that the localization map $\la\colon R \ra S^{-1}R$ induces an isomorphism $\la^*\colon HH^*(S^{-1}R,S^{-1}M) \ra HH^*(R,M)$.

The proof is an adaption of \cite[Lem.~3.3.8, Prop.~3.3.10]{Weib} to our present hypotheses. Observe that for any $k$-algebra $\tilde{R}$, left $\tilde{R}$-modules $A,B$, and central multiplicative set $T \subset \tilde{R}$ acting trivially on $B$, the localization $\tilde\la\colon \tilde{R} \ra T^{-1}\tilde{R}$ induces an isomorphism $\tilde{\la}^*\colon \Hom_{T^{-1}\tilde{R}}(T^{-1}A,T^{-1}B) \cong \Hom_{\tilde{R}}(A,B)$. Under the same hypotheses, this implies $\la^*\colon\Ext^i_{T^{-1}\tilde{R}}(T^{-1}A,T^{-1}B)\cong\Ext^i_{\tilde{R}}(A,B)$, since a free $\tilde{R}$-module resolution $F_* \ra A \ra 0$ (by possibly infinitely generated $\tilde{R}$-modules) induces a free $T^{-1}\tilde{R}$-module resolution $T^{-1}F_* \ra T^{-1}A \ra 0$, as module localization is an exact functor preserving (possibly infinite) direct sums. Apply the previous observation to identify the cochain complexes $\Hom_{T^{-1}\tilde{R}}(T^{-1}F_*,T^{-1}B) \cong \Hom_{\tilde{R}}(F_*,B)$. Finally, we apply this to $\tilde{R}=R\ot_k R^\mathrm{op}$ and $T=S\t S^\mathrm{op}$ to obtain the result on Hochschild cohomology:
\begin{align*}
 HH^*&(S^{-1}R, S^{-1}M) \stackrel{\mathclap{\eqref{od5eq5}}}{\;=\;} \Ext^*_{S^{-1}R \ot_k (S^{-1}R)^\mathrm{op}}(S^{-1}R, S^{-1}M)\\
 &\cong\Ext^*_{T^{-1}(R\ot_k R^\mathrm{op})}(T^{-1}R, T^{-1}M)\cong \Ext^*_{R\ot_k R^\mathrm{op}}(R,M) \stackrel{\mathclap{\eqref{od5eq5}}}{\;=\;} HH^*(R,M).
\end{align*}
Here we identify the $k$-algebras $T^{-1}(R\ot_k R^\mathrm{op}) = S^{-1}R \ot_k (S^{-1}R)^\mathrm{op}$ and correspondingly the $S^{-1}R$-bimodules $S^{-1}R$ and $S^{-1}M$ with the left $T^{-1}(R\ot_k R^\mathrm{op})$-modules $T^{-1}R$ and $T^{-1}M$.
\end{proof}

\begin{prop}
\label{od5prop4}
{\bf (a)} Let\/ $K_X \ra X, K_Y \ra Y$ be line bundles,\/ $f\colon X \ra Y$ a continuous map, and\/ $\ka \colon f^*K_Y \ra K_X$ an isomorphism. Suppose\/ $K_Y$ has a square root, so\/ $S_{K_Y}\neq \es$. Then pullback defines a map
\e
\label{od5eq6}
 S_{K_Y} \longra S_{K_X},\qquad
[J,\jmath] \longmapsto [f^*J, \ka\circ f^*\jmath],
\e
which is equivariant over\/ $f^*\colon H^1(Y;\Z_2)\to H^1(X;\Z_2).$

\noindent{\bf (b)} Let\/ $(X,e_X,\mu_X), (Y,e_Y,\mu_Y)$ be H-spaces, $L_X \ra X\t X, L_Y \ra Y\t Y$ line bundles, $K_X=\De_X^*(L_X),$ $K_Y=\De_X^*(L_Y),$ and\/ $\phi_X\colon \pi_1^*(K_X)\ot \pi_2^*(K_X) \ot L_X^{\ot^2} \ra \mu_X^*(K_X)$ and\/ $\phi_Y\colon \pi_1^*(K_Y)\ot \pi_2^*(K_Y) \ot L_Y^{\ot^2} \ra \mu_Y^*(K_Y)$ isomorphisms {\rm(}at this point, it is unnecessary to require the entire structure as in Definition\/~{\rm\ref{od5def3}(b))}.

Assume also an H-map\/ $f\colon X \ra Y,$ an isomorphism\/ $\la\colon (f\t f)^*(L_Y) \ra L_X,$ homotopy\/ $h\colon \mu_Y\circ(f\t f) \simeq f\circ \mu_X,$ and isomorphism\/ $\th\colon \bigl(\pi_1^*f^*(K_Y) \ot \pi_2^*f^*(K_Y) \ot (f\t f)^*(L_Y^{\ot^2})\bigr) \t [0,1] \ra h^*(K_Y)$ over\/ $X\t X \t [0,1]$ interpolating between\/ $(f\t f)^*(\phi_Y)$ and\/ $\mu_X^*(\ka^{-1})\circ \phi_X\circ (\pi_1^*(\ka)\ot\pi_2^*(\ka)\ot\la^{\ot^2})$ at the endpoints, where we set\/ $\ka=\De_X^*(\la)\colon f^*(K_Y) \ra K_X$. If\/ $S_{K_Y,L_Y}^{\phi_Y}\neq \es,$ then the map \eqref{od5eq6} induced by\/ $(f,\ka)$ restricts to a map
\e
\label{od5eq7}
 S_{K_Y,L_Y}^{\phi_Y} \longra S_{K_X,L_X}^{\phi_X},
\e
which is equivariant over\/ $f^*\colon HP^{1,1}(Y,\Z_2)\to HP^{1,1}(X,\Z_2).$

\noindent{\bf (c)} Continuing {\bf(b)}, suppose\/ $K_X,L_X$ and\/ $K_Y,L_Y$ both admit the entire structure of Definition\/~{\rm\ref{od5def3}(b)}. Assume\/ $\mathrm{o}_{K_Y}=0,$ so that also\/ $\mathrm{o}_{K_X}=0$ and we have well-defined secondary obstruction classes\/ $\mathrm{o}_{K_X,L_X}^{\phi_X}\in HP^{2,1}(X,\Z_2)$ and\/ $\mathrm{o}_{K_Y,L_Y}^{\phi_Y}\in HP^{2,1}(Y,\Z_2)$. Then\/ $f^*\colon HP^{2,1}(Y,\Z_2)\to HP^{2,1}(X,\Z_2)$ maps the obstruction class\/ $\mathrm{o}_{K_Y,L_Y}^{\phi_Y}$ to\/~$\mathrm{o}_{K_X,L_X}^{\phi_X}$.
\smallskip

\noindent
{\bf (d)} Continuing {\bf(b)}, suppose\/ $K_X,L_X$ and\/ $K_Y,L_Y$ both admit the entire structure of Definition\/~{\rm\ref{od5def3}(b)}. Assume\/ $f$ is a homotopy-theoretic group completion. Then\/ $S_{K_X,L_X}^{\phi_X}\neq \es \iff S_{K_Y,L_Y}^{\phi_Y}\neq \es,$ and \eqref{od5eq7} becomes a bijection.
\end{prop}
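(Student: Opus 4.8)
The plan is to prove (a)--(c) by unwinding the definitions and invoking naturality of the obstruction-theoretic constructions behind Propositions \ref{od5prop1} and \ref{od5prop2}, and then to deduce (d) formally from (c) and Propositions \ref{od5prop1}--\ref{od5prop3}. Parts (a)--(c) are essentially bookkeeping; (d) is the substantive conclusion, but it is short once the rest is in hand.

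\textbf{Parts (a), (b), (c).} For (a): the assignment $[J,\jmath]\mapsto[f^*J,\ka\ci f^*\jmath]$ is well defined because an isomorphism $\io\colon(J,\jmath)\ra(J',\jmath')$ pulls back to $f^*\io$, which intertwines $\ka\ci f^*\jmath$ with $\ka\ci f^*\jmath'$; and it is equivariant over $f^*\colon H^1(Y,\Z_2)\ra H^1(X,\Z_2)$ because $f^*(J\ot_\R P)\cong f^*J\ot_\R f^*P$ and $w_1$ is natural, recalling from Proposition \ref{od5prop1}(b) that $S_{K_Y},S_{K_X}$ are torsors over $H^1(-,\Z_2)$ via tensoring by real line bundles. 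For (b): given a representative $\xi_Y\colon\pi_1^*(J)\ot\pi_2^*(J)\ot L_Y\ra\mu_Y^*(J)$ of the compatibility datum for $[J,\jmath]$, I would produce $\xi_X$ by pulling $\xi_Y$ back along $f\t f$, transporting its target $(\mu_Y\ci(f\t f))^*(J)$ to $\mu_X^*f^*(J)$ along the homotopy $h$, and precomposing with $\la^{-1}$ on the $L$-factor; the interpolating isomorphism $\th$ in the hypotheses of (b) is precisely what converts the pulled-back square identity for $\xi_Y$, which a priori involves $(f\t f)^*(\phi_Y)$, into the $\phi_X$-square identity demanded of $\xi_X$. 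Equivalently, and more cleanly, I would show that the principal $\Z_2$-bundle $Q_{f^*J}$ over $X\t X$ from the proof of Proposition \ref{od5prop2} is canonically isomorphic to $(f\t f)^*Q_J$ (matching the defining fibrewise conditions using $\la,h,\th$), so that triviality of $Q_J$ forces triviality of $Q_{f^*J}$ and compatible square roots are carried to compatible ones; the $HP^{1,1}$-equivariance is then as in (a), since the real line bundles $P$ preserving compatibility are exactly those with $\mu_X^*w_1(P)=\pi_1^*w_1(P)+\pi_2^*w_1(P)$, a condition preserved under pullback along the H-map $f$. For (c): with $Q_{f^*J}\cong(f\t f)^*Q_J$ established, naturality of the first Stiefel--Whitney class gives $w_1(Q_{f^*J})=(f\t f)^*w_1(Q_J)$, and since $\mathrm{o}_{K_X,L_X}^{\phi_X}$ is by construction the image of $w_1(Q_{f^*J})$ under the natural map $H^1(X\t X,\Z_2)\ra HP^{2,1}(X,\Z_2)$ (built from the Pontryagin product, hence functorial for H-maps), we get $f^*\bigl(\mathrm{o}_{K_Y,L_Y}^{\phi_Y}\bigr)=\mathrm{o}_{K_X,L_X}^{\phi_X}$.

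\textbf{Part (d), and the expected main obstacle.} First, $\mathrm{o}_{K_X}=0\iff\mathrm{o}_{K_Y}=0$: one implication is naturality of the mod-$2$ reduction of $c_1$ (as $f^*K_Y\cong K_X$ via $\ka$), and the converse uses Remark \ref{od5rem2}, which refines the primary obstructions to classes in $HP^{1,2}(X,\Z_2),HP^{1,2}(Y,\Z_2)$ corresponding under $f^*$, together with Proposition \ref{od5prop3}, which makes $f^*$ an isomorphism on $HP^{1,2}$. If this common obstruction is nonzero, then neither $K_X$ nor $K_Y$ admits a square root by Proposition \ref{od5prop1}(a), so $S_{K_X,L_X}^{\phi_X}=S_{K_Y,L_Y}^{\phi_Y}=\es$ and both assertions of (d) hold trivially. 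If it vanishes, the secondary obstructions $\mathrm{o}_{K_X,L_X}^{\phi_X}\in HP^{2,1}(X,\Z_2)$ and $\mathrm{o}_{K_Y,L_Y}^{\phi_Y}\in HP^{2,1}(Y,\Z_2)$ are defined, correspond under $f^*$ by (c), and $f^*$ is an isomorphism on $HP^{2,1}$ by Proposition \ref{od5prop3}; hence by Proposition \ref{od5prop2}(a), $S_{K_X,L_X}^{\phi_X}\neq\es$ is equivalent to $\mathrm{o}_{K_X,L_X}^{\phi_X}=0$, thus to $\mathrm{o}_{K_Y,L_Y}^{\phi_Y}=0$, thus to $S_{K_Y,L_Y}^{\phi_Y}\neq\es$, which is the stated equivalence. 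Finally, when both sets are nonempty, Proposition \ref{od5prop2}(b) makes them torsors over $HP^{1,1}(X,\Z_2)$ and $HP^{1,1}(Y,\Z_2)$, the map \eqref{od5eq7} is equivariant over $f^*\colon HP^{1,1}(Y,\Z_2)\ra HP^{1,1}(X,\Z_2)$ by (b), and $f^*$ is an isomorphism by Proposition \ref{od5prop3}; since an equivariant map of torsors over a group isomorphism is automatically bijective (fix a base point and transport), \eqref{od5eq7} is a bijection. The genuine work in all of this is the compatibility verification hidden in (b) --- equivalently the identification $Q_{f^*J}\cong(f\t f)^*Q_J$ of principal $\Z_2$-bundles --- which is where the full associativity package of Definition \ref{od5def3}(b) and the homotopy $\th$ of (b) are consumed, and which must be done carefully at the level of fibres; everything in (d) itself is then formal. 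A smaller point of care in (d) is that one must transfer the \emph{primary} obstruction $\mathrm{o}_K$ through its refinement to $HP^{1,2}$, not merely the secondary obstruction supplied directly by (c), to close the nonemptiness equivalence in the case $\mathrm{o}_{K_Y}\ne0$.
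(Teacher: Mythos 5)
Your proposal is correct and follows essentially the same route as the paper: (b) via the $\Z_2$-bundle of square-root isomorphisms over $X\t X\t[0,1]$ built from $\th$ and fibre transport, (c) by identifying $Q_{f^*J}$ with $(f\t f)^*Q_J$ through that interpolating bundle and using naturality of $w_1$, and (d) formally from (b), (c) and Propositions \ref{od5prop1}--\ref{od5prop3} via the torsor structure and the refinement of the primary obstruction to $HP^{1,2}$. Two cosmetic caveats: the identification $Q_{f^*J}\cong(f\t f)^*Q_J$ is not canonical (it depends on $h,\th$, which is harmless since only $w_1$ is used), and the full associativity package of Definition \ref{od5def3}(b) is not consumed in (b) itself --- as the paper notes, it is needed only to make the obstruction representatives Hochschild--Pontryagin cocycles in (c) and (d).
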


\begin{proof}
Part {\bf (a)} is obvious. For {\bf (b)}, let $[J,\jmath] \in S_{K_Y,L_Y}^{\phi_Y}$. We can use $\th$ to define a principal $\Z_2$-bundle $P$ over $X\t X\t[0,1]$ whose fibre over $(x,y,t)$ consists of those $\xi_{x,y,t}\in\Hom_\C\bigl(f^*(J_Y)|_x\ot f^*(J_Y)|_y\ot (f\t f)^*(L_Y)|_{(x,y)},h_t^*(J_Y)|_{(x,y)}\bigr)$ squaring to $\th|_{X\t X\t\{t\}}$. Since $[J,\jmath]$ is compatible, there exists an isomorphism $\pi_1^*(J_Y)\ot \pi_2^*(J_Y) \ot L_Y \ra \mu_Y^*(J_Y)$ squaring to $\phi_Y$ whose pullback along $f\t f$ determines a global section of $P|_{X\t X\t \{0\}}$. The fibre transport in $P$ of this section along the interval yields a global section of $P|_{X\t X\t \{1\}}$, explicitly, an isomorphism $\pi_1^*f^*(J_Y)\ot \pi_2^*f^*(J_Y) \ot (f\t f)^*(L_Y) \ra \mu_X^*f^*(J_Y)$ squaring to $\psi|_{X\t X\t\{1\}}$, which we precompose with $\id_{\pi_1^*f^*(J_Y)\ot \pi_2^*f^*(J_Y)}\ot\la^{-1}$ to verify that $[f^*J, \ka\circ f^*\jmath]$ is compatible.\smallskip
 
\noindent{\bf (c)} We note that the full structure of Definition {\rm\ref{od5def3}(b)} is only required to show that the defining representatives for $\mathrm{o}_{K_Y,L_Y}^{\phi_Y}, \mathrm{o}_{K_X,L_X}^{\phi_X}$ are Hochschild--Pontryagin cocycles, in particular, for the application of Proposition~{\rm\ref{od5prop3}}.
 
Fix a square root $J_Y$ of $K_Y$. Recall from the proof of Proposition \ref{od5prop2}(a) that $\mathrm{o}_{K_Y,L_Y}^{\phi_Y}$ is represented by the first Stiefel--Whitney class of the principal $\Z_2$-bundle $Q_{J_Y}\ra Y\t Y$ with fibre over $(x,y)$ those $\xi_{x,y}\in \Hom_\C\big(J_Y|_x \ot J_Y|_y \ot L_Y|_{(x,y)}, J_Y|_{\mu_Y(x,y)}\big)$ squaring to $\phi_Y|_{(x,y)}$. Set $J_X=f^*(J_Y)$, as in (a). Then $\mathrm{o}_{K_X,L_X}^{\phi_X}$ is represented by the class of $Q_{J_X}\ra X\t X$ with fibre over $(x,y)$ those $\Hom_\C\big(f^*(J_Y)|_x \ot f^*(J_Y)|_y \ot L_X|_{(x,y)}, f^*(J_Y)|_{\mu_X(x,y)}\big)$ squaring to $\phi_X|_{(x,y)}$. We can form another principal $\Z_2$-bundle $\tilde{Q}\ra X\t X\t [0,1]$ with fibre over $(x,y,t)$ the elements $f^*(J_Y)|_x\ot f^*(J_Y)|_y \ot (f\t f)^*(L_Y)|_{(x,y)} \ra h_t^*(J_Y)|_{(x,y)}$ squaring to $\th$. Then $\tilde{Q}|_{X\t X\t\{0\}} \cong (f\t f)^*(Q_{J_Y})$ and $\tilde{Q}|_{X\t X\t\{1\}} \cong Q_{J_X}$. Hence $(f\t f)^*w_1(Q_{J_Y})=w_1(Q_{J_X})$, as required.\smallskip
 
\noindent{\bf (d)} When $f$ is a homotopy-theoretic group completion, \eqref{od5eq7} is a map of torsors, unless $S_{K_Y,L_Y}^{\phi_Y}=\es$, equivariant over the homomorphism $f^*\colon HP^{1,1}(Y,\Z_2)\to HP^{1,1}(X,\Z_2),$ which is bijective by Proposition \ref{od5prop3}. It remains to prove $S_{K_X,L_X}^{\phi_X}\neq\es \implies S_{K_Y,L_Y}^{\phi_Y}\neq\es$. As in Remark \ref{od5rem2}, we have $\mathrm{o}_{K_X} \in HP^{1,2}(X,\Z_2)$ and $\mathrm{o}_{K_Y} \in HP^{1,2}(Y,\Z_2)$. By assumption, $S_{K_X,L_X}^{\phi_X}\neq\es$, so both obstructions $\mathrm{o}_{K_X}=0$ and $\mathrm{o}_{K_X,L_X}^{\phi_X}=0$ vanish. Since $\ka\colon f^*K_Y\cong K_X$, the isomorphism $f^*\colon HP^{1,2}(Y,\Z_2)\to HP^{1,2}(X,\Z_2)$ of Proposition \ref{od5prop3} maps $\mathrm{o}_{K_Y} \mapsto \mathrm{o}_{K_X}$, so we conclude $\mathrm{o}_{K_Y}=0$. Then the secondary obstruction class $\mathrm{o}_{K_Y,L_Y}^{\phi_Y} \in HP^{2,1}(Y,\Z_2)$ is defined, and by (c) gets mapped to $\mathrm{o}_{K_X,L_X}^{\phi_X}=0$ under the isomorphism $f^*\colon HP^{2,1}(Y,\Z_2)\to HP^{2,1}(X,\Z_2).$ We therefore have $\mathrm{o}_{K_Y,L_Y}^{\phi_Y}=0$ which implies that $S_{K_Y,L_Y}^{\phi_Y}\neq\es$ by Proposition~\ref{od5prop2}(a).
\end{proof}

We need one last preparation before we prove Theorem \ref{od3thm2}.

\begin{prop}
\label{od5prop5}
Algebraic spin structures on\/ $\oM$ as in Definition\/~{\rm\ref{od3def2}} correspond bijectively to square roots of\/ $K_\oM^\top \ra \oM{}^\top$ as a topological complex line bundle. This bijection identifies the subsets of compatible spin structures.
\end{prop}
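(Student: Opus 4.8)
The plan is to reduce the statement to a comparison of square roots of an algebraic line bundle $K_\oM$ on a higher $\C$-stack $\oM$ with square roots of its topological realization $K_\oM^\top$ on $\oM^\top$. First I would recall that passing to the topological realization is a functor $(-)^\top\colon\HSta_\C\to\mathbf{TopSta}$ (or to spaces/simplicial sets after taking coarse realizations, as in the cited background), which sends $\O_\M$-line bundles to topological complex line bundles and is compatible with pullback, tensor product, and duals; in particular it sends $K_\oM$ to $K_\oM^\top$ and sends the isomorphisms \eqref{od3eq8}, \eqref{od3eq12}, \eqref{od3eq16}--\eqref{od3eq20} to their topological analogues, which is precisely the structure needed to make sense of ``compatible'' on the topological side via Definition \ref{od5def3}(b) with $X=\oM^\top$, $\mu_X=\bar\Phi^\top$, $L=L_\oM^\top$. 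So there is a well-defined map from algebraic spin structures (Definition \ref{od3def2}) to topological square roots of $K_\oM^\top$, taking $[K_\oM^{1/2}]\mapsto[(K_\oM^{1/2})^\top]$, and it visibly carries compatible spin structures (existence of $\xi_\oM$) to compatible topological square roots (existence of $\xi_\oM^\top$).

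The substance is that this map is a bijection on isomorphism classes, i.e.\ that the groupoid of algebraic line bundles on $\oM$ (morphisms = isomorphisms) is equivalent, via $(-)^\top$, to the groupoid of topological complex line bundles on $\oM^\top$. The first ingredient is the comparison $\mathrm{Pic}(\oM)\xrightarrow{\ \sim\ }\mathrm{Pic}(\oM^\top)$ together with the corresponding statement on automorphisms, $\Aut(L)=H^0(\oM,\O_\M^\times)\to H^0(\oM^\top,\underline{\C}^\times)$; these follow because $\oM=\Map_{\HSta_\C}(X,\Perf_\C)$ (Remark \ref{od3rem1}(h)) and one can reduce to the analogous comparison for $\Perf_\C$ and finite-type algebraic approximations, where GAGA-type and homotopy-theoretic comparison results apply. (Alternatively one cites the existing literature on line bundles on $\oM$, e.g.\ the work on topological realizations of moduli stacks used already in \S\ref{od5}.) Second, a square root of $K_\oM$ is (up to iso) the datum of a principal $\Z_2$-bundle $P\to K_\oM$ with a multiplicativity isomorphism $\eta$ over $K_\oM\times_\oM K_\oM$ as in Remark \ref{od5rem1}; the realization functor carries this description, and the total space $(K_\oM)^\top$ together with $(K_\oM\times_\oM K_\oM)^\top=K_\oM^\top\times_{\oM^\top}K_\oM^\top$ and the squaring map $\C^\times\to\C^\times$ all match, so square roots correspond. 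Concretely, to upgrade a topological square root $J^\top$ of $K_\oM^\top$ to an algebraic one I would: produce an algebraic line bundle $\tilde J$ with $\tilde J^\top\cong J^\top$ by surjectivity of $\mathrm{Pic}(\oM)\to\mathrm{Pic}(\oM^\top)$; then $\tilde J^{\otimes 2}$ and $K_\oM$ have isomorphic realizations, and since $\tilde J^{\otimes 2}\otimes K_\oM^{-1}$ is $2$-torsion in $\mathrm{Pic}(\oM^\top)=\mathrm{Pic}(\oM)$ its class is a $\Z_2$-bundle; absorbing it by the $H^1(\oM^\top,\Z_2)$-torsor action (Proposition \ref{od5prop1}(b)), which agrees with the algebraic one, yields an algebraic $\jmath\colon\tilde J^{\otimes 2}\to K_\oM$ realizing the prescribed one. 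Injectivity is the same argument run on $\Aut$.

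The compatibility refinement is then formal: the forgetful maps $S_{K_\oM,L_\oM}^{\phi_\oM}\hookrightarrow S_{K_\oM}$ and $S_{K_\oM^\top,L_\oM^\top}^{\phi_\oM^\top}\hookrightarrow S_{K_\oM^\top}$ are compatible with $(-)^\top$, and ``$\xi_\oM$ exists algebraically'' $\iff$ ``a certain $\Z_2$-bundle $Q_{J}$ on $\oM\times\oM$ is trivial'' $\iff$ (by the $\mathrm{Pic}$/$H^1(-,\Z_2)$ comparison on $\oM\times\oM$, whose realization is $\oM^\top\times\oM^\top$) ``$Q_J^\top$ is trivial'' $\iff$ ``$\xi_\oM^\top$ exists''; the same for $\M,\M^\top$. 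The main obstacle I anticipate is the first ingredient above --- establishing that $(-)^\top$ induces an equivalence on the Picard groupoids of $\oM$ and $\oM\times\oM$ (equivalently, that $H^1_{\mathrm{alg}}(\oM,\Z_2)\to H^1(\oM^\top,\Z_2)$ and $H^2_{\mathrm{alg}}(\oM,\cdot)\to H^2(\oM^\top,\cdot)$ behave correctly), since $\oM$ is a higher, non-finite-type stack; the clean route is to exploit $\oM=\Map(X,\Perf_\C)$, reduce to the universal case $\Perf_\C$ and its finite-type truncations, and invoke the topological-realization comparison results already underpinning the rest of \S\ref{od5}, rather than attempting a hands-on sheaf-cohomology computation.
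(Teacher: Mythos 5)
There is a genuine gap, and it sits exactly where you flag ``the main obstacle'': your argument needs topological realization to induce an equivalence of Picard groupoids of \emph{complex line bundles}, i.e.\ $\mathrm{Pic}(\oM)\xrightarrow{\ \sim\ }\mathrm{Pic}(\oM{}^\top)$ (and likewise over $\oM\t\oM$, and on automorphisms $H^0(\oM,\O^\times)\to H^0(\oM{}^\top,\underline{\C}^\times)$), since you propose to lift a topological square root $J^\top$ of $K_{\oM{}^\top}$ to an algebraic line bundle $\tilde J$ by essential surjectivity of $(\;)^\top$ on line bundles. This comparison is not supplied by the tools you invoke and is not to be expected: GAGA-type statements require properness/finite type, whereas $\oM$ is a non-finite-type higher stack, and already for simple non-proper varieties (e.g.\ a positive-genus smooth affine curve) algebraic and topological Picard groups differ drastically; the comparison results underpinning \S\ref{od5} (Blanc's adjunction $(\;)^\top\dashv R$ together with Artin's comparison theorem) are valid for \emph{finite constant} coefficients such as $\Z_2$, not for $\mathbb{G}_m$ versus $\underline{\C}^\times$ or for integral $H^2$. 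The automorphism comparison is false as well: algebraic invertible functions on $\oM$ and continuous $\C^*$-valued functions on $\oM{}^\top$ do not agree. So the key lifting step ``produce $\tilde J$ with $\tilde J^\top\cong J^\top$'' is unsupported, and with it the surjectivity of your map from algebraic to topological square roots.

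The paper's proof is designed precisely to avoid any line-bundle comparison. It only establishes the monoidal equivalence $(\;)^\top\colon\cP_S\to\cP_{S^\top}$ for Picard groupoids of principal $\Z_2$-bundles (via $\Hom(S,[*/\Z_2])\cong\Hom(S^\top,B\Z_2)$, using Blanc's adjunction and Artin comparison with $\Z_2$-coefficients), and then reformulates \emph{everything} in $\Z_2$-bundle terms: the set of square roots, if nonempty, is a torsor under $\pi_0(\cP_\oM)\cong\pi_0(\cP_{\oM{}^\top})$, so the realization map of torsors is automatically bijective; and --- the step your plan is missing --- the very \emph{existence} of a square root is expressed, via Remark~\ref{od5rem1}, as the existence of a principal $\Z_2$-bundle $P$ over the total space of $K_\oM$ with a multiplicativity isomorphism over $K_\oM\t_\oM K_\oM$ satisfying a cocycle identity over the triple fibre product, so applying the $\Z_2$-bundle equivalence to those stacks gives $S_{K_\oM}=\es\iff S_{K_{\oM{}^\top}}=\es$ without ever lifting a line bundle. (Equivalently, the primary and secondary obstructions live in $H^2(\cdot,\Z_2)$ and in Hochschild--Pontryagin groups with $\Z_2$-coefficients, where Artin comparison applies.) The compatibility refinement is then handled exactly as you suggest, via the $\Z_2$-bundles $Q_J$ on $\oM\t\oM$, which is the part of your plan that does go through. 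If you replace your Picard-group lifting step by the Remark~\ref{od5rem1} reformulation plus the $\Z_2$-bundle groupoid equivalence, your argument becomes essentially the paper's.
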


\begin{proof}
For any $S \in \HSta_\C$ as in Remark \ref{od3rem1}(f), write $\cP_S$ for the Picard groupoid of algebraic principal $\Z_2$-bundles over $S$. Topological realization defines a monoidal equivalence to topological principal $\Z_2$-bundles over $S^\top$,
\ea
(\;)^\top &\colon \cP_S \stackrel{\simeq}{\longra} \cP_{S^\top},
\label{od5eq8}\\
\pi_0(\cP_S)&=\Hom_{\Ho(\HSta_\C)}(S,[*/\Z_2]) \cong \Hom_{\Ho(\Top)}(S^\top,B\Z_2)=\pi_0(\cP_{S^\top}),
\nonumber\\
\pi_1(\cP_S)&=\Hom_{\Ho(\HSta_\C)}(S,\Z_2) \cong \Hom_{\Ho(\Top)}(S^\top,\Z_2)=\pi_1(\cP_{S^\top}),
\nonumber
\ea
viewing $\Z_2$ as a higher $\C$-stack as in Remark \ref{od3rem1} and for the quotient stack $[*/\Z_2]$. For any affine $\C$-scheme $T$ the \'etale cohomology $H^*(T,\underline\Z_2)$ agrees with singular cohomology $H^*(T^\top,\Z_2)$, by Artin's comparison theorem \cite[Exp.~XI, Th\'eor\`eme 4.4]{SGA4}. Recall the Quillen adjunction from Blanc~\cite[Prop.~3.2(i)]{Blan}
\begin{equation*}
(\;)^\top : \HSta_\C \longleftrightarrows \Top : R,
\end{equation*}
where $R(K)_{\Spec A}=\Map\bigl((\Spec A)^\top, K\bigr)$. By the fact just discussed, the morphisms $(\;)^\top\colon \Z_2 \ra R(\Z_2)$ and $(\;)^\top\colon [*/\Z_2] \ra R(B\Z_2)$ are equivalences of stacks, so the maps \eqref{od5eq8} are indeed bijections by adjunction.

The key point now is that $(\;)^\top$ takes (compatible) spin structures to (compatible) square roots, and that in each case both the choices and the obstructions can be expressed entirely within the category of principal $\Z_2$-bundles.

In detail, write $S_{K_\oM}$ for the set of algebraic spin structures on $\oM$ and $S_{K_\oM^\top}$ for the set of square roots of $K_\oM^\top$. Topological realization determines a map $S_{K_\oM}\ra S_{K_\oM^\top}$, equivariant over the isomorphism $\pi_0(\cP_\oM)\ra\pi_0(\cP_{\oM{}^\top})$, which is therefore a bijection unless $S_{K_\oM}=\es$, as both sets are torsors under $\pi_0(\cP_\oM), \pi_0(\cP_{\oM{}^\top})$.

As to the case $S_{K_\oM}=\es$, Remark \ref{od5rem1} expresses the obstruction to the existence of a square root of $K_\oM$ in terms of the existence of a principal $\Z_2$-bundle $P \ra Q$ with an isomorphism $P\ot_{\Z_2} P \ra Q$ in the category $\cP_{K_\oM\t_\oM K_\oM}$ satisfying an identity in $\cP_{K_\oM\t_\oM K_\oM\t_\oM K_\oM}$, where $Q$ denotes the pullback of the principal $\Z_2$-bundle $\C^* \ra \C^*$, $z\mapsto z^2$, along $\de\colon K_\oM \t_\oM K_\oM \ra \C^*$. Applying \eqref{od5eq8} to $S=Q, K_\oM\t_\oM K_\oM, K_\oM\t_\oM K_\oM\t_\oM K_\oM$, this formulation makes it clear that $S_{K_\oM}=\es \iff S_{K_\oM^\top}=\es$, so we have a bijection. 

From an alternative point of view, the algebraic obstruction class $\mathrm{o}_{K_\oM} \equiv c_1(K_\oM) \pmod{2} $ in $H^2(\oM, \underline{\Z}_2)$ is mapped onto the topological obstruction class $\mathrm{o}_{K_{\oM{}^\top}} \equiv c_1(K_{\oM{}^\top}) \pmod{2}$ in $H^2(\oM{}^\top,\Z_2)$ by topological realization, which by Artin comparison is an isomorphism~$H^2(\oM,\underline{\Z}_2) \cong H^2(\oM{}^\top,\Z_2)$.

Similarly, in both the algebraic and topological categories, any two compatible square roots differ by an element of the group $\cG^\mathrm{alg}$ or $\cG^\top$ of isomorphism classes of algebraic or topological principal $\Z_2$-bundles $[P]$ with the additional property that they admit an isomorphism $\pi_1^*(P)\ot\pi_2^*(P) \cong \Phi^*(P)$, and $(\;)^\top\colon\cG^\mathrm{alg} \ra \cG^\top$ is an isomorphism by \eqref{od5eq8} applied to $S=\oM, \oM\t\oM$. As before, topological realization defines a map of torsors
\begin{equation*}
(\;)^\top\colon S_{K_\oM, L_\oM}^{\phi_\oM} \longra S_{K_\oM^\top, L_\oM^\top}^{\phi_\oM^\top},
\end{equation*}
which is therefore a bijection, unless~$S_{K_\oM, L_\oM}^{\phi_\oM}=\es$. 

Artin comparison gives an isomorphism $H_*(\oM,\underline{\Z}_2) \cong H_*(\oM{}^\top,\Z_2)$ of the Pontryagin rings and therefore of the corresponding Hochschild--Pontryagin cohomologies, and $\cG^\mathrm{alg}=HP^{1,1}(\oM,\underline{\Z}_2)$, $\cG^\top=HP^{1,1}(\oM{}^\top,\Z_2)$. The argument for obstructions to compatible spin structures is similar to the previous case. For the secondary obstruction classes, $\mathrm{o}_{K_\oM,L_\oM}^{\phi_\oM} \in H^{2,1}(\oM,\underline{\Z}_2)$ gets mapped to $\mathrm{o}_{K_{\oM{}^\top},L_{\oM{}^\top}}^{\phi_{\oM{}^\top}} \in H^{2,1}(\oM{}^\top,\Z_2)$ under the topological realization isomorphism, as is clear from the construction in Proposition~\ref{od5prop2}(a).
\end{proof}

We now prove Theorem \ref{od3thm2}, following the general strategy of Cao--Gross--Joyce \cite[\S 3.4]{CGJ} whose basic setup we now briefly recall. Let $X$ be a spin smooth projective $3$-fold. Use the notation $\M,\oM,K_\M,\ldots$ of Definitions \ref{od3def1} and \ref{od3def2}. As in \cite[\S 3.4]{CGJ} we have the $\C$-stack $\T$ of algebraic vector bundles $F\ra X$ generated by global sections $(s_1, s_2, \ldots)$ with $s_i=0$  for $i\gg 0$. Taking the direct sum of vector bundles defines a morphism $\Xi\colon \T\t \T \ra \T$, whose topological realization $\Xi^\top$ determines an H-space structure on $\T{}^\top$, see~\cite[Lem.~3.20]{CGJ}. 

As in \cite[Def.~3.18]{CGJ} there is a canonical morphism $\De\colon \T \ra \oM$ mapping the $\C$-point $[F,(s_1,s_2,\ldots)]$ to $[F]$, whose topological realization $\De^\top\colon \T{}^\top \ra \oM{}^\top$ is a homotopy-theoretic group completion by \cite[Prop.~3.22]{CGJ}. Here $\oM{}^\top$ gets the H-space structure $\bar\Phi^\top$ of Remark \ref{od3rem1}(h). As in \cite[Def.~3.23]{CGJ}, the comparison between algebraic and differential geometry is given by a morphism
\begin{equation*}
\La\colon \T{}^\top \longra \coprod\nolimits_{\;
\begin{subarray}{l}\text{iso. classes $[P]$ of principal}\\ \text{$\U(n)$-bundles $P \ra X$, $n\ge 0$}\end{subarray}} \B_P^\cla
\end{equation*}
mapping the $\C$-point $[F,(s_1,s_2,\ldots)]$ to the unique Chern connection compatible both with the holomorphic structure on the vector bundle $F\ra X$ and the Hermitian metric determined by the splitting $\C^N = F \op \Ker(s_1,s_2,\ldots, s_N)$ for some $N\gg 0$ with $s_i=0$ for all $i>N$. According to \cite[(3.39)]{CGJ}, $\La$ is a morphism of H-spaces, where we use $\Phi^\top$ from Definition \ref{od2def2} to give the disjoint union $\B^\cla \coloneqq \coprod\nolimits_{[P]} \B_P^\cla$ an H-space structure.

Recall from Definition \ref{od3def1} and Remark \ref{od3rem2}(a) the canonical line bundles $K_\oM \ra \oM$, $L_\oM \ra \oM\t\oM$ and $\phi_\oM\colon \pi_1^*(K_\oM)\ot\pi_2^*(K_\oM)\ot L_\oM^{\ot^2}\longra\Phi^*(K_\oM)$. Explicitly, at $\C$-points $[F], [F_1], [F_2]$ of $\M \subset \oM$ we have
\e
\label{od5eq9}
\begin{aligned}
K_\oM\vert_{[F]}&\cong \bigot\nolimits_{i=0}^m\bigl(\La^{\rm top}_\C\Ext^i(F,F\ot J)\bigr)^{(-1)^i},\\
L_\oM\vert_{([F_1],[F_2])}&\cong \bigot\nolimits_{i=0}^m\bigl(\La^{\rm top}_\C\Ext^i(F_1,F_2\ot J)\bigr)^{(-1)^i}.
\end{aligned}
\e
These bundles pull back to $\T$ as $K_\T = \De^*K_\oM$, $L_\T = \De^*L_\oM$ and also we have $\phi_\T=\De^*\phi_\oM\colon \pi_1^*(K_\T)\ot\pi_2^*(K_\T)\ot L_\T \ra \Xi^*K_\T$, using $\bar\Phi\circ(\De\t\De)=\De\circ\Xi$.

On the differential geometric side, the disjoint union of the topological line bundles $K^{E_\bu}_P \ra \B_P$ and $L^{E_\bu}_{P_1,P_2} \ra \B_{P_1} \t \B_{P_2}$ from Definitions \ref{od2def4} and \ref{od2def5} determine line bundles $K^{E_\bu} \ra \B$ and $L^{E_\bu} \ra \B \t \B$. From \eqref{od2eq11} we recall the isomorphism $\phi^{E_\bu}\colon \pi_1^*(K^{E_\bu})\ot \pi_2^*(K^{E_\bu}) \ot \bigl(L^{E_\bu}\bigr)^{\ot^2} \ra \Phi^*(K^{E_\bu})$. Explicitly,
\e
\label{od5eq10}
\begin{aligned}
K^{E_\bu}\big\vert_{[\nabla_P]} &= \det_\C\bigl( \bar\partial_{\nabla_{\Ad P}} + \bar\partial_{\nabla_{\Ad P}}^*\bigr),\\
L^{E_\bu}\big\vert_{[\nabla_{P_1},\nabla_{P_2}]} &= \det_\C\bigl( \bar\partial_{\nabla_{P_1^*\ot P_2}} + \bar\partial_{\nabla_{P_1^*\ot P_2}}^*\bigr),
\end{aligned}
\e
acting on $(0,p)$-forms, $p$ odd, with values in $\End_\C(P)$ and $\Hom_\C(P_1,P_2)$, respectively. As in \cite[(3.57)]{CGJ}, for coefficients in a holomorphic vector bundle the Dolbeault resolution \eqref{od5eq10} computes the Ext-groups \eqref{od5eq9}. We obtain in this way isomorphisms $\La^*(K^{E_\bu}) \cong K_\T^\top$ and $\La^*(L^{E_\bu}) \cong L_\T^\top$, compatible with taking direct sums, so Proposition \ref{od5prop4}(b) yields a map
\e
\label{od5eq11}
S_{K^{E_\bu}, L^{E_\bu}}^{\phi^{E_\bu}}
\longra
S_{K_\T^\top, L_\T^\top}^{\phi_\T^\top}.
\e
As $\De^\top$ is a homotopy-theoretic group completion, and the bundles on $\T$ are just the pullbacks of the bundles on $\oM$, Proposition \ref{od5prop4}(d) gives a bijection
\e
\label{od5eq12}
S_{K_\oM^\top, L_\oM^\top}^{\phi_\oM^\top}\longra S_{K_\T^\top, L_\T^\top}^{\phi_\T^\top}.
\e
By Theorem \ref{od2thm1} we have a canonical spin structure in $S_{K^{E_\bu}, L^{E_\bu}}^{\phi^{E_\bu}}$, whose image under \eqref{od5eq11} has a unique pre-image under \eqref{od5eq12}. The resulting compatible topological spin structure for $K_\oM^\top$ corresponds by Proposition \ref{od5prop5} uniquely to a compatible algebraic spin structure.

\section{Proof of Theorem \ref{od3thm3}}
\label{od6}

We recall the definition of group cohomology, following Brown \cite[p.~59]{Brow}:

\begin{dfn}
\label{od6def1}
Let $G$ be a group, and $R$ a commutative ring. Define a complex $\bigl(C^*(G,R),\d\bigr)$ of $R$-modules by $C^n(G,R)=\Map(G^n,R)$, the set of all maps of sets $f:G^n\ra R$, with $C^0(G,R)=R$, and $\d:C^n(G,R)\ra C^{n+1}(G,R)$ by
\ea
\d f(g_1,\ldots,g_{n+1})=f(g_2,&\ldots,g_{n+1})+\sum_{i=1}^nf(g_1,\ldots,g_{i-1},g_ig_{i+1},g_{i+2},\ldots,g_{n+1})
\nonumber\\
&+(-1)^{n+1}f(g_1,\ldots,g_n),
\label{od6eq1}
\ea
for all $f:G^n\ra R$ and $g_1,\ldots,g_{n+1}\in G$. Then $\d\ci\d=0$. Define an $R$-module
\begin{equation*}
H^n(G,R)=\frac{\Ker\bigl(\d:C^n(G,R)\ra C^{n+1}(G,R)\bigr)}{\Im\bigl(\d:C^{n-1}(G,R)\ra C^n(G,R)\bigr)},\qquad n=0,1,\ldots.
\end{equation*}
(More generally one can define $H^*(G,M)$ for a $G$-module $M$ over $R$. We have taken the $G$ action on $R$ to be trivial in the definition of $\d$ in~\eq{od6eq1}.)
\end{dfn}

Work in the situation of Theorem \ref{od3thm3}, with $K_0^\semi(X)$ as in Definition \ref{od3def4}. Then $[K_\oM^{1/2}]$ is a spin structure on $\oM$ compatible with direct sums. Choose a representative $K_\oM^{1/2}$ for $[K_\oM^{1/2}]$ with isomorphism $\jmath:(K_\oM^{1/2})\ot(K_\oM^{1/2})\ra K_\oM$. By Definition \ref{od3def2} there exists an isomorphism $\check\xi_\oM$ in \eq{od3eq23} with~$\check\xi_\oM\ot\check\xi_\oM=\phi_\oM$.

Suppose that $(\dot K_\oM^{1/2},\dot \xi_\oM)$ is a strong spin structure compatible with direct sums extending $[K_\oM^{1/2}]$, as in Definition \ref{od3def3}, with isomorphism $j:(\dot K_\oM^{1/2})\ot(\dot K_\oM^{1/2})\ra K_\oM$. Then $(K_\oM^{1/2},\jmath),(\dot K_\oM^{1/2},j)$ lie in the same isomorphism class of square roots, so there exists an isomorphism $\io:K_\oM^{1/2}\ra\dot K_\oM^{1/2}$ with $\jmath=j\ci(\io\ot\io)$. Define a morphism $\xi_\oM$ as in \eq{od3eq23} by $\xi_\oM=\dot\xi_\oM\ci(\pi_1^*(\io)\ot\pi_2^*(\io)\ot\id_{L_\oM})$. Then $(K_\oM^{1/2},\xi_\oM)\cong(\dot K_\oM^{1/2},\dot\xi_\oM)$, so $(K_\oM^{1/2},\xi_\oM)$ is a strong spin structure compatible with direct sums extending~$[K_\oM^{1/2}]$. 

This shows that we may take $\dot K_\oM^{1/2}$ to be the fixed square root $K_\oM^{1/2}$. Also, $\xi_\oM$ and $\check\xi_\oM$ are both isomorphisms in \eq{od3eq23} satisfying $\xi_\oM\ot\xi_\oM=\check\xi_\oM\ot\check\xi_\oM=\phi_\oM$. Hence $\xi_\oM=\pm \check\xi_\oM$ locally on $\oM\t\oM$. Therefore $\xi_\oM=E\cdot\check\xi_\oM$ for some locally constant function $E:\oM\t\oM\ra\{\pm 1\}$. So $E$ takes a constant value $\pm 1$ on each connected component of $\oM\t\oM$. Let us write $\Z_2=\Z/2\Z=\{\ul 0,\ul 1\}$, and define a bijection $\Z_2\ra\{\pm 1\}$, written $z\mapsto (-1)^z$, by $(-1)^{\ul 0}=1$ and~$(-1)^{\ul 1}=-1$. 

The connected components of $\oM$ are $\al\in K_0^\semi(X)$ as in Definition \ref{od3def4}, so the connected components of $\oM$ are $\al\t\be$ for $\al,\be\in K_0^\semi(X)$. Therefore $E:\oM\t\oM\ra\{\pm 1\}$ is uniquely determined by a map $\ep:K_0^\semi(X)^2\ra\Z_2$, such that $E\vert_{\al\t\be}=(-1)^{\ep(\al,\be)}$ for all $\al,\be\in K_0^\semi(X)$. We have proved:

\begin{lem}
\label{od6lem1}
Fix\/ $K_\oM^{1/2},\jmath,\check\xi_\oM$ as above. Then any strong spin structure on\/ $\oM$ compatible with direct sums extending\/ $[K_\oM^{1/2}]$ is isomorphic to one of the form\/ $(K_\oM^{1/2},\xi_\oM),$ where\/ $\xi_\oM$ in \eq{od3eq23} is defined by\/ $\xi_\oM\vert_{\al\t\be}=(-1)^{\ep(\al,\be)}\cdot\check\xi_\oM \vert_{\al\t\be}$ for all\/ $\al,\be$ in\/ $K_0^\semi(X),$ for\/ $\ep:K_0^\semi(X)^2\ra\Z_2,$ that is,\/~$\ep\in C^2(K_0^\semi(X),\Z_2)$.
\end{lem}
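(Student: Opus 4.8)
The plan is to prove this in two moves: first reduce to strong spin structures whose underlying square root is literally the chosen representative $K_\oM^{1/2}$, and then compare the associated compatibility isomorphism with the fixed one $\check\xi_\oM$, using that both square to $\phi_\oM$.

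For the reduction, suppose $(\dot K_\oM^{1/2},\dot\xi_\oM)$ is a strong spin structure on $\oM$ compatible with direct sums extending $[K_\oM^{1/2}]$, with trivialization $j:\dot K_\oM^{1/2}\ot\dot K_\oM^{1/2}\ra K_\oM$. Since $(K_\oM^{1/2},\jmath)$ and $(\dot K_\oM^{1/2},j)$ represent the same isomorphism class of square roots of $K_\oM$, there is an isomorphism $\io:K_\oM^{1/2}\ra\dot K_\oM^{1/2}$ with $\jmath=j\ci(\io\ot\io)$. Setting $\xi_\oM=\dot\xi_\oM\ci\bigl(\pi_1^*(\io)\ot\pi_2^*(\io)\ot\id_{L_\oM}\bigr)$, one checks that $\io$ makes the two squares of \eq{od3eq28} commute (the first by $\jmath=j\ci(\io\ot\io)$, the second by the definition of $\xi_\oM$), and that conjugating the associativity diagram \eq{od3eq27} for $(\dot K_\oM^{1/2},\dot\xi_\oM)$ by $\io$ at each occurrence of a power of $\dot K_\oM^{1/2}$ yields \eq{od3eq27} for $(K_\oM^{1/2},\xi_\oM)$. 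Hence $(K_\oM^{1/2},\xi_\oM)$ is a strong spin structure compatible with direct sums, isomorphic to $(\dot K_\oM^{1/2},\dot\xi_\oM)$ and extending $[K_\oM^{1/2}]$, so it is enough to describe such structures whose underlying square root is $K_\oM^{1/2}$.

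For such a structure $(K_\oM^{1/2},\xi_\oM)$, both $\xi_\oM$ and $\check\xi_\oM$ are isomorphisms as in \eq{od3eq23} on $\oM\t\oM$ with $\xi_\oM\ot\xi_\oM=\check\xi_\oM\ot\check\xi_\oM=\phi_\oM$, the first because $(K_\oM^{1/2},\xi_\oM)$ is a strong spin structure and the second by the choice of $\check\xi_\oM$. Then $u:=\xi_\oM\ci\check\xi_\oM^{-1}$ is an automorphism of the line bundle $\bar\Phi^*(K_\oM^{1/2})$ on $\oM\t\oM$, hence multiplication by a nowhere-zero regular function $c$; the identity $u\ot u=(\xi_\oM\ot\xi_\oM)\ci(\check\xi_\oM\ot\check\xi_\oM)^{-1}=\phi_\oM\ci\phi_\oM^{-1}=\id$ forces $c^2=1$, so $c$ is $\{\pm1\}$-valued and, being regular, locally constant. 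By Definition \ref{od3def4} the connected components of $\oM$ are precisely the elements of $K_0^\semi(X)=\pi_0(\oM)$, so the components of $\oM\t\oM$ are the substacks $\al\t\be$ with $\al,\be\in K_0^\semi(X)$; thus $c$ is constant on each $\al\t\be$, and writing $c\vert_{\al\t\be}=(-1)^{\ep(\al,\be)}$ defines a map $\ep:K_0^\semi(X)^2\ra\Z_2$, i.e.\ a cochain $\ep\in C^2(K_0^\semi(X),\Z_2)$, with $\xi_\oM\vert_{\al\t\be}=(-1)^{\ep(\al,\be)}\check\xi_\oM\vert_{\al\t\be}$ as claimed.

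I expect no real obstacle here: the only slightly delicate point is the bookkeeping in the reduction step, namely verifying that transporting $\dot\xi_\oM$ along $\io$ really does carry the large diagram \eq{od3eq27} to the corresponding diagram for $(K_\oM^{1/2},\xi_\oM)$. This is purely formal — one inserts $\io$ and $\io^{-1}$ at each node labelled by a power of $\dot K_\oM^{1/2}$ and uses naturality of the other arrows — and everything else rests on the elementary facts that an automorphism of a line bundle is multiplication by a function and that $\pi_0(\oM\t\oM)=\pi_0(\oM)\t\pi_0(\oM)=K_0^\semi(X)^2$.
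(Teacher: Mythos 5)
Your proof is correct and follows essentially the same route as the paper's: reduce to the case where the underlying square root is literally $K_\oM^{1/2}$ by transporting $\dot\xi_\oM$ along an isomorphism $\io$ compatible with the trivializations, then observe that $\xi_\oM$ and $\check\xi_\oM$ both square to $\phi_\oM$ so that their ratio is a locally constant $\{\pm 1\}$-valued function indexed by $\pi_0(\oM\times\oM)\cong K_0^\semi(X)^2$. The minor elaborations you add — phrasing the comparison as the automorphism $u=\xi_\oM\circ\check\xi_\oM^{-1}$ with $u\otimes u=\id$, and noting explicitly that \eq{od3eq27} transports along $\io$ — are sound and consistent with what the paper leaves implicit.
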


Next we calculate the condition on $\ep\in C^2(K_0^\semi(X),\Z_2)$ for $(K_\oM^{1/2},\xi_\oM)$ to be a strong spin structure on $\oM$ compatible with direct sums. Consider equation \eq{od3eq27} on $\oM\t\oM\t\oM$ with $\check\xi_\oM$ in place of $\xi_\oM$. Although \eq{od3eq27} for $\check\xi_\oM$ need not commute, \eq{od3eq19} does commute, and \eq{od3eq19} is the `square' of \eq{od3eq27} as $\check\xi_\oM\ot\check\xi_\oM=\phi_\oM$. Hence \eq{od3eq27} for $\check\xi_\oM$ commutes up to sign. 

That is, there is a unique locally constant function $Z:\oM\t\oM\t\oM\ra\{\pm 1\}$ such that the two routes around \eq{od3eq27} for $\check\xi_\oM$ differ by multiplication by $Z$. Define a unique function $\ze:K_0^\semi(X)^3\ra\Z_2$ such that $Z\vert_{\al\t\be\t\ga}=(-1)^{\ze(\al,\be,\ga)}$ for all $\al,\be,\ga\in K_0^\semi(X)$.

Now compare \eq{od3eq27} for $\xi_\oM$ and $\check\xi_\oM$, and restrict to the connected component $\al\t\be\t\ga$ in $\oM\t\oM\t\oM$. Then:
\begin{itemize}
\setlength{\itemsep}{0pt}
\setlength{\parsep}{0pt}
\item[(i)] \eq{od3eq27} for $\check\xi_\oM$ fails to commute by a factor $(-1)^{\ze(\al,\be,\ga)}$ on $\al\t\be\t\ga$.
\item[(ii)] In the top row, $\pi_{12}^*(\xi_\oM)$ and $\pi_{12}^*(\check\xi_\oM)$ differ by a factor $(-1)^{\ep(\al,\be)}$.
\item[(iii)] In the left column, $\pi_{23}^*(\xi_\oM)$ and $\pi_{23}^*(\check\xi_\oM)$ differ by a factor $(-1)^{\ep(\be,\ga)}$.
\item[(iv)] In the right column, $(\Phi\t\id_\oM)^*(\xi_\oM)$ and $(\Phi\t\id_\oM)^*(\check\xi_\oM)$ differ by a factor $(-1)^{\ep(\al+\be,\ga)}$, as $\Phi$ maps $\al\t\be\ra\al+\be$.
\item[(v)] In the bottom row, $(\id_\oM\t\Phi)^*(\xi_\oM),(\id_\oM\t\Phi)^*(\check\xi_\oM)$ differ by~$(-1)^{\ep(\al,\be+\ga)}$.
\end{itemize}
Combining (i)--(v), we see \eq{od3eq27} for $\xi_\oM$ commutes on $\al\t\be\t\ga$ if and only if
\begin{equation*}
\d\ep(\al,\be,\ga)=\ep(\be,\ga)-\ep(\al+\be,\ga)+\ep(\al,\be+\ga)-\ep(\al,\be) =\ze(\al,\be,\ga)
\end{equation*}
in $\Z_2$, using \eq{od6eq1}, where the signs do not matter as $1=-1$ in $\Z_2$. This shows:

\begin{lem}
\label{od6lem2}
In Lemma\/ {\rm\ref{od6lem1},}\/ $(K_\oM^{1/2},\xi_\oM)$ is a strong spin structure on\/ $\oM$ compatible with direct sums if and only if\/ $\ep$ in\/ $C^2(K_0^\semi(X),\Z_2)$ and\/ $\ze$ in $C^3(K_0^\semi(X),\Z_2)$ satisfy\/ $\d\ep=\ze,$ for\/ $\d$ as in \eq{od6eq1}.
\end{lem}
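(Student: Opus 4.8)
The plan is to measure, cohomologically, the failure of the associativity diagram \eq{od3eq27} to commute when $\xi_\oM$ is the isomorphism produced in Lemma \ref{od6lem1}. First I would treat the reference isomorphism $\check\xi_\oM$: since $\check\xi_\oM\ot\check\xi_\oM=\phi_\oM$ and the diagram \eq{od3eq19} --- which is the ``square'' of \eq{od3eq27} --- commutes by the construction in Definition \ref{od3def1}, the two composites around \eq{od3eq27} for $\check\xi_\oM$ must agree up to a global scalar. That scalar is a locally constant function $Z\colon\oM\t\oM\t\oM\ra\{\pm 1\}$: both composites are isomorphisms of the same line bundle whose squares agree (namely the two composites of \eq{od3eq19}), so their ratio is a locally constant $\pm 1$. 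Since the connected components of $\oM\t\oM\t\oM$ are indexed by $K_0^\semi(X)^3$ (Definition \ref{od3def4}), this $Z$ is recorded by a unique $\ze\in C^3(K_0^\semi(X),\Z_2)$ with $Z\vert_{\al\t\be\t\ga}=(-1)^{\ze(\al,\be,\ga)}$.

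Next I would compare \eq{od3eq27} for $\xi_\oM$ with that for $\check\xi_\oM$ on a fixed component $\al\t\be\t\ga$, using $\xi_\oM\vert_{\al'\t\be'}=(-1)^{\ep(\al',\be')}\check\xi_\oM\vert_{\al'\t\be'}$ from Lemma \ref{od6lem1}. Each of the four arrows of \eq{od3eq27} built from $\xi_\oM$ contributes one power of $-1$: $\pi_{12}^*(\xi_\oM)$ contributes $(-1)^{\ep(\al,\be)}$, $\pi_{23}^*(\xi_\oM)$ contributes $(-1)^{\ep(\be,\ga)}$, $(\Phi\t\id_\oM)^*(\xi_\oM)$ contributes $(-1)^{\ep(\al+\be,\ga)}$ since $\Phi$ sends $\al\t\be$ to $\al+\be$, and $(\id_\oM\t\Phi)^*(\xi_\oM)$ contributes $(-1)^{\ep(\al,\be+\ga)}$; the edges involving only $\chi_\oM,\psi_\oM$ or identities are unchanged. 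Therefore the two composites around \eq{od3eq27} for $\xi_\oM$ differ by $(-1)^{\ep(\be,\ga)+\ep(\al+\be,\ga)+\ep(\al,\be+\ga)+\ep(\al,\be)}\cdot Z\vert_{\al\t\be\t\ga}$, so the diagram commutes on $\al\t\be\t\ga$ precisely when $\ep(\be,\ga)+\ep(\al+\be,\ga)+\ep(\al,\be+\ga)+\ep(\al,\be)=\ze(\al,\be,\ga)$ in $\Z_2$. As $1=-1$ in $\Z_2$, the signs in \eq{od6eq1} are immaterial and the left-hand side is exactly $\d\ep(\al,\be,\ga)$; letting $\al,\be,\ga$ range over $K_0^\semi(X)$ yields $\d\ep=\ze$, which is the claimed criterion.

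The delicate points I would check rather than assert are the bookkeeping of which pullback of $\xi_\oM$ occupies which edge of \eq{od3eq27} --- the main place an error could creep in, so I would write out the four edges explicitly before collecting signs --- and the fact that the orientation-convention ambiguity in $\det(\cE^\bu\op\cF^\bu)\cong\det\cE^\bu\ot\det\cF^\bu$ noted in Remark \ref{od3rem2}(b), which could in principle perturb the edge-by-edge count, is harmless because everything lands in $\Z_2$. I would also record that $\ze$ depends only on the fixed data $K_\oM^{1/2},\jmath,\check\xi_\oM$, since its cohomology class is the obstruction $\Om\bigl([K_\oM^{1/2}]\bigr)$ appearing in Theorem \ref{od3thm3}.
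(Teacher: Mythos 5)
Your proposal is correct and follows essentially the same route as the paper: the paper likewise defines $\ze$ as the locally constant sign by which \eq{od3eq27} for $\check\xi_\oM$ fails to commute (using that \eq{od3eq19} is its square), then compares the four edges on the component $\al\t\be\t\ga$ with exactly your sign bookkeeping $(-1)^{\ep(\al,\be)},(-1)^{\ep(\be,\ga)},(-1)^{\ep(\al+\be,\ga)},(-1)^{\ep(\al,\be+\ga)}$, concluding $\d\ep=\ze$ with signs irrelevant in $\Z_2$. Your side remarks (edge bookkeeping, harmlessness of the determinant orientation convention mod $2$, and that $[\ze]$ is the obstruction class of Theorem \ref{od3thm3}) are consistent with the paper's treatment.
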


Next we prove:

\begin{lem}
\label{od6lem3}
In the situation above,\/ $\d\ze=0$ in\/ $C^4(K_0^\semi(X),\Z_2)$.
\end{lem}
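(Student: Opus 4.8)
The plan is to show that $\ze$ is automatically a cocycle by exhibiting it as an obstruction class to coherence at the next level, which forces $\d\ze=0$ by a general nonsense argument. The slick way to package this is to observe that $\ze$ measures the failure of the ``associativity'' square \eq{od3eq27} for $\check\xi_\oM$ to commute, and that the coherence of the direct-sum operation on $\oM$ (it is a symmetric monoidal structure, so it satisfies the pentagon axiom up to coherent homotopy) means there is a ``pentagon'' diagram on $\oM\t\oM\t\oM\t\oM$ built out of copies of \eq{od3eq27}. The commuting (on the nose) of the pentagon for the line bundles $K_\oM,L_\oM$ and the isomorphisms $\phi_\oM,\chi_\oM,\psi_\oM$ --- which is forced because it is a lift of a diagram in $\Ho(\HSta_\C)$ that commutes since direct sum is coherently associative, exactly as \eq{od3eq19} is a lift of \eq{od3eq20} --- then says that the five instances of $Z$ glued around the pentagon multiply to $+1$. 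Translating ``multiply to $+1$'' into additive $\Z_2$ notation and reading off which component of $\oM^4$ one is on yields precisely $\d\ze=0$ in $C^4(K_0^\semi(X),\Z_2)$, with $\d$ as in \eq{od6eq1}.

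Concretely, first I would set up the five-fold product $\oM\t\oM\t\oM\t\oM$ and the five bracketings of a four-fold direct sum, together with the four associator morphisms relating adjacent bracketings; each associator is, on the level of line bundles, an instance of (the square root of) \eq{od3eq19}, i.e.\ of \eq{od3eq27}, and each fails to commute by a sign governed by $\ze$ on the appropriate triple of K-theory classes. Second, I would write down the pentagon identity: going around the pentagon using the four associators returns the identity, because the underlying diagram of stacks \eq{od3eq20} satisfies the pentagon coherence (direct sum in $D^b\coh(X)$ is coherently associative, and the construction of $\phi_\oM,\chi_\oom,\psi_\oM$ in Definition \ref{od3def1} is functorial enough that the pentagon for the $\phi$'s commutes on the nose, being a lift to line bundles of a commuting diagram of stacks --- the same reasoning that makes \eq{od3eq19} commute). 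Third, since each edge of the pentagon, when $\check\xi_\oM$ replaces $\xi_\oM$, introduces a sign $(-1)^{\ze(\cdots)}$ on the relevant component, and the total composite around the pentagon is the identity (sign $+1$), the five signs must multiply to $1$; on the component $\al\t\be\t\ga\t\de$ of $\oM^4$ this reads
\[
\ze(\be,\ga,\de)+\ze(\al+\be,\ga,\de)+\ze(\al,\be+\ga,\de)+\ze(\al,\be,\ga+\de)+\ze(\al,\be,\ga)=0
\]
in $\Z_2$, which is exactly $\d\ze(\al,\be,\ga,\de)=0$ by \eq{od6eq1}. Hence $\d\ze=0$ in $C^4(K_0^\semi(X),\Z_2)$.

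There is a low-tech alternative that avoids invoking higher coherence explicitly: one can deduce $\d\ze=0$ purely formally from the relation $\d\ep=\ze$ of Lemma \ref{od6lem2} by noting that, were a strong spin structure to exist, $\d\ze=\d\d\ep=0$ automatically; but $\ze$ is defined independently of $\ep$, so to make this rigorous without assuming existence I would instead argue that for \emph{any} choice of local square roots $\xi_\oM=\pm\check\xi_\oM$ (not necessarily satisfying \eq{od3eq27}), the quantity ``failure of \eq{od3eq27}'' transforms under a change $\ep\mapsto\ep+\d\eta$ exactly by $\ze\mapsto\ze$ (it is independent of $\ep$), while the consistency of the four-fold reassociation forces the pentagon relation above; this is really the same computation as the pentagon argument, reorganized.

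The main obstacle I expect is bookkeeping: correctly identifying which of the $\binom{4}{2}$ line bundles $\pi_{ij}^*(L_\oM)$ and which pullbacks of $\phi_\oM,\chi_\oM,\psi_\oM$ appear on each of the five edges of the pentagon, and verifying that the underlying pentagon of \emph{line bundles} (built from $K_\oM$, the $\pi_{ij}^*(L_\oM)$, and the canonical isomorphisms of Definition \ref{od3def1}) genuinely commutes on the nose, so that the five $\ze$-signs are the only discrepancy. This is the analogue, one categorical level up, of the commutativity of \eq{od3eq19}, and should follow by the same method --- lifting a diagram that provably commutes in $\Ho(\HSta_\C)$ (because direct sum is coherently associative and the relevant Grothendieck--Verdier / determinant isomorphisms are natural) --- but writing it out carefully, keeping track of the orientation-convention signs flagged in Remark \ref{od3rem2}(b), is where the real work lies. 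Once that naturality is in hand, the passage to $\d\ze=0$ is immediate.
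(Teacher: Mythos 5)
Your proposal is essentially the paper's own proof: the paper's diagram \eq{od6eq3} on the component $\al\t\be\t\ga\t\de$ of $\oM{}^4$ is exactly your pentagon (drawn, since the sum is strictly associative on components, as a subdivided rectangle with eight objects), built from five pullbacks of \eq{od3eq27} whose discrepancies are $(-1)^{\ze(\be,\ga,\de)},\ldots,(-1)^{\ze(\al,\be,\ga+\de)}$; closing the loop forces the product of the five signs to be $1$, which is \eq{od6eq2}, i.e.\ $\d\ze=0$. One caveat on how you justify the loop-closing: the principle ``a lift to line bundles of a diagram commuting in $\Ho(\HSta_\C)$ commutes on the nose'' is not valid and proves too much --- \eq{od3eq27} itself is such a lift, so that principle would give $\ze\equiv 0$ and make the lemma vacuous; likewise the commutativity of \eq{od3eq19} is a fact the paper proves from the explicit construction of $\phi_\M,\chi_\M,\psi_\M$, not a formal consequence of \eq{od3eq20}. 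Fortunately no $\phi$-level pentagon is needed at all: once $\ze$ is defined (using commutativity of \eq{od3eq19} at the triple level), the only quadruple-level input is that $\check\xi_{\al,\be}$ and $\check\xi_{\ga,\de}$, acting on disjoint tensor factors, commute with each other --- the interchange law, which is the paper's ``outer rectangle of \eq{od6eq3} obviously commutes'' --- plus the $L_\oM$, $\chi_\oM$, $\psi_\oM$ bookkeeping you rightly flag (and which the paper also suppresses). With that substitution your argument is the paper's argument.
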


\begin{proof} Let $\al,\be,\ga,\de\in K_0^\semi(X)$. We must show that
\e
\ze(\be,\ga,\de)-\ze(\al+\be,\ga,\de)+\ze(\al,\be+\ga,\de)-\ze(\al,\be,\ga+\de)+\ze(\al,\be,\ga)=0.
\label{od6eq2}
\e
We will do this using a diagram of morphisms of line bundles on the connected component $\al\t\be\t\ga\t\de$ of $\oM\t\oM\t\oM\t\oM$, a diagram made up of five copies of pullbacks of \eq{od3eq27}. Unfortunately this diagram is too large to fit on the page if we include all the information, so we represent it schematically as follows:
\e
\begin{gathered}
\xymatrix@!0@C=48pt@R=35pt{
*+[r]{J_\al\ot J_\be\ot J_\ga\ot J_\de} \ar[ddd]^(0.65){\check\xi_{\al,\be}\ot\id_{J_\ga}\ot\id_{J_\de}} \ar[rrrrrr]^(0.5){\id_{J_\al}\ot\id_{J_\be}\ot\check\xi_{\ga,\de}} \ar[drr]^(0.65){\id_{J_\al}\ot\check\xi_{\be,\ga}\ot\id_{J_\de}} &&& \ar@<-3.6ex>@{}[d]^(0.4){\ze(\be,\ga,\de)} &&& *+[l]{J_\al\ot J_\be\ot J_{\ga+\de}} \ar[ddd]_(0.65){\check\xi_{\al,\be}\ot\id_{J_{\ga+\de}}} \ar[dll]_(0.65){\id_{J_\al}\ot\check\xi_{\be,\ga+\de}}
\\
& \ar@<-8.5ex>@{}[d]^(.1){\ze(\al,\be,\ga)} & \!\!\!\!\!\!\! J_\al\!\ot\! J_{\be+\ga}\!\ot\! J_\de \ar@<-.8ex>[d]_{\check\xi_{\al,\be+\ga}\ot\id_{J_\de}} \ar[rr]^{\id_{J_\al}\ot\check\xi_{\be+\ga,\de}} & \ar@<-4.5ex>@{}[d]^{\ze(\al,\be+\ga,\de)} & J_\al\!\ot\! J_{\be+\ga+\de}\!\!\!\!\!\!\!\! \ar@<.8ex>[d]^{\check\xi_{\al,\be+\ga+\de}} & \ar@<-.5ex>@{}[d]^(0.1){\ze(\al,\be,\ga+\de)} 
\\
&& \!\!\!\!\!\!\!J_{\al+\be+\ga}\!\ot\! J_\de  \ar[rr]_{\check\xi_{\al+\be+\ga,\de}} & \ar@<-4.5ex>@{}[d]^(0.65){\ze(\al+\be,\ga,\de)} & J_{\al+\be+\ga+\de}\!\!\!\!\!\!\! &
\\
*+[r]{J_{\al+\be}\ot J_\ga\ot J_\de} \ar[urr]_(0.7){\check\xi_{\al+\be,\ga}\ot\id_{J_\de}} \ar[rrrrrr]^(0.7){\id_{J_{\al+\be}}\ot\check\xi_{\ga,\de}} &&&&&& *+[l]{J_{\al+\be}\ot J_{\ga+\de}.\!} \ar[ull]_(0.4){\check\xi_{\al+\be,\ga+\de}} }\!\!\!\!\!
\end{gathered}
\label{od6eq3}
\e

Here is what \eq{od6eq3} means. It is a diagram of line bundles on $\al\t\be\t\ga\t\de$, and isomorphisms between them. For the objects, by $J_\al$ we mean the pullback $\pi_1^*(K_\oM^{1/2}\vert_\al)$ to $\al\t\be\t\ga\t\de$ of the line bundle $K_\oM^{1/2}\vert_\al$ on $\al\subset\oM$. By $J_{\al+\be}$ we mean the pullback of $K_\oM^{1/2}\vert_{\al+\be}$ on $\al+\be\subset\oM$ by the morphism $\Phi\ci(\pi_1,\pi_2):\al\t\be\t\ga\t\de\ra\al+\be$. Other line bundles $J_{\cdots}$ are similar. There should also be tensor products with many copies of line bundles like $\pi_{\al,\be}^*(L_\oM)$, as in \eq{od3eq27}, but we have omitted these for clarity.  
 
For the morphisms, by $\check\xi_{\al,\be}:J_\al\ot J_\be\ra J_{\al+\be}$ we mean the pullback to $\al\t\be\t\ga\t\de$ of the isomorphism from \eq{od3eq23}
\begin{equation*}
\check\xi_\oM\vert_{\al\t\be}:\pi_1^*(K_\oM^{1/2}\vert_\al)\ot\pi_2^*(K_\oM^{1/2}\vert_\be)\ot L_\oM\vert_{\al\t\be}\longra\Phi_{\al,\be}^*(K_\oM^{1/2}\vert_{\al+\be}),
\end{equation*}
where the line bundle $L_\oM$ is omitted. The other morphisms $\check\xi_{\cdots}$ are similar. We have also omitted pullbacks of $\chi_\M,\psi_\M$ in \eq{od3eq17}--\eq{od3eq18} which appear in~\eq{od3eq27}.

We have also written $\ze(\al,\be,\ga),\ldots,\ze(\al,\be,\ga+\de)$ in the five small quadrilaterals in \eq{od6eq3}. Each is a pullback of \eq{od3eq27} to $\al\t\be\t\ga\t\de$, and the insertion $\ze(\al,\be,\ga),\ldots$ means the quadrilateral commutes up to the sign $(-1)^{\ze(\al,\be,\ga)},\ldots.$

For comparison, the analogue of \eq{od3eq27} written in the notation of \eq{od6eq3} is 
\begin{equation*}
\xymatrix@C=190pt@R=15pt{
*+[r]{J_\al\ot J_\be\ot J_\ga} \ar[d]^{\id_{J_\al}\ot\check\xi_{\be,\ga}} \ar[r]_(0.7){\check\xi_{\al,\be}\ot\id_{J_\ga}} \ar@<-2ex>@{}[dr]^(0.5){\ze(\al,\be,\ga)} & *+[l]{J_{\al+\be}\ot J_\ga} \ar[d]_{\check\xi_{\al+\be,\ga}} \\
*+[r]{J_\al\ot J_{\be+\ga}} \ar[r]^(0.7){\check\xi_{\al,\be+\ga}}  & *+[l]{J_{\al+\be+\ga}.\!} }
\end{equation*}

The outer rectangle of \eq{od6eq3} obviously commutes, as it is applying $\check\xi_{\al,\be}$ to $J_\al\ot J_\be$ and $\check\xi_{\ga,\de}$ to $J_\ga\ot J_\de$ in the two possible orders.  It follows that the product of the five signs $(-1)^{\ze(\al,\be,\ga)},\ldots,(-1)^{\ze(\al,\be,\ga+\de)}$ measuring the failure of the small quadrilaterals to commute must be 1. But this is equivalent to \eq{od6eq2} in $\Z_2$. Since this holds for all $\al,\be,\ga,\de\in K_0^\semi(X)$, the lemma follows.
\end{proof}

We can now deduce Theorem \ref{od3thm3}(a). By Lemma \ref{od6lem3}, there is a class $\Om\bigl([K_\oM^{1/2}]\bigr):=[\ze]$ in $H^3\bigl(K_0^\semi(X),\Z_2\bigr)$. Then $\Om\bigl([K_\oM^{1/2}]\bigr)=0$ if and only if there exists $\ep\in C^2(K_0^\semi(X),\Z_2)$ with $\d\ep=\ze$. But Lemmas \ref{od6lem1} and \ref{od6lem2} show that this is the necessary and sufficient condition for there to exist a strong spin structure on $\oM$ compatible with direct sums extending~$[K_\oM^{1/2}]$.

For part (b), suppose $\Om\bigl([K_\oM^{1/2}]\bigr)=0$. Then Lemmas \ref{od6lem1} and \ref{od6lem2} imply that the set of isomorphism classes of strong spin structures on $\oM$ compatible with direct sums extending $[K_\oM^{1/2}]$, is in 1-1 correspondence with the set of $\ep\in C^2(K_0^\semi(X),\Z_2)$ with $\d\ep=\ze$ modulo the equivalence relation $\ep\sim\ep'$ if the corresponding pairs admit an isomorphism $\ka:(K_\oM^{1/2},\xi_\oM)\ra(K_\oM^{1/2},\xi'_\oM)$ in the sense of Definition \ref{od3def3}. Then $\ka:K_\oM^{1/2}\ra K_\oM^{1/2}$ is an isomorphism on $\oM$ with $\ka\ot\ka=\id$ by the first diagram in \eq{od3eq28}, so $\ka=\pm\id$ locally on $\oM$. 

Hence there exists a unique $\de:K_0^\semi(X)\ra\Z_2$ with $\ka\vert_\al=(-1)^{\de(\al)}\cdot\id_{K_\oM^{1/2}}\vert_\al$ for all $\al\in K_0^\semi(X)$. From the second diagram in \eq{od3eq28} restricted to~$\al\t\be$
\begin{equation*}
\xymatrix@C=208pt@R=20pt{
*+[r]{\pi_1^*(K_\oM^{1/2}\vert_\al)\ot\pi_2^*(K_\oM^{1/2}\vert_\be)\ot L_\oM\vert_{\al\t\be}} \ar[d]^{\pi_1^*(\ka\vert_\al)\ot\pi_2^*(\ka\vert_\be)\ot\id=(-1)^{\de(\al)}(-1)^{\de(\be)}\id}\ar[r]_(0.65){\xi_\oM\vert_{\al\t\be}=(-1)^{\ep(\al,\be)}\cdot\check\xi_\oM\vert_{\al\t\be}} & *+[l]{\Phi^*(K_\oM^{1/2})\vert_{\al\t\be}} \ar[d]_{\Phi^*(\ka\vert_{\al+\be})=(-1)^{\de(\al+\be)}\id} \\
*+[r]{\pi_1^*(K_\oM^{1/2}\vert_\al)\ot\pi_2^*(K_\oM^{1/2}\vert_\be)\ot L_\oM\vert_{\al\t\be}} \ar[r]^(0.65){\xi'_\oM\vert_{\al\t\be}=(-1)^{\ep'(\al,\be)}\cdot\check\xi_\oM\vert_{\al\t\be}} & *+[l]{\Phi^*(K_\oM^{1/2})\vert_{\al\t\be}} }
\end{equation*}
we see that $\ep'(\al,\be)=\ep(\al,\be)+\de(\be)-\de(\al+\be)+\de(\al)$. Thus $\ep'=\ep+\d\de$ for some $\de\in C^1(K_0^\semi(X),\Z_2)$. So the set of isomorphism classes of such strong spin structures is in bijection with 
\begin{equation*}
\frac{\bigl\{\ep\in C^2(K_0^\semi(X),\Z_2):\d\ep=\ze\bigr\}}{\ep+\d\de\sim\ep,\;\> \de\in C^1(K_0^\semi(X),\Z_2)}.
\end{equation*}
This is a torsor over $H^2\bigl(K_0^\semi(X),\Z_2\bigr)$, proving Theorem~\ref{od3thm3}(b).

Finally, taking $\xi'_\oM=\xi_\oM$ and $\ep'=\ep$ above, the group of automorphisms $\ka$ of $(K_\oM^{1/2},\xi_\oM)$ is identified with the subgroup of $\de\in C^1(K_0^\semi(X),\Z_2)$ satisfying $\ep+\d\de=\ep$, that is, $\d\de=0$. Since $\d\bigl(C^0(K_0^\semi(X),\Z_2)\bigr)=0$, this subgroup is $H^1(K_0^\semi(X),\Z_2)$, proving Theorem~\ref{od3thm3}(c).

\medskip

\noindent{\small\sc The Mathematical Institute, Radcliffe
Observatory Quarter, Woodstock Road, Oxford, OX2 6GG, U.K.

\noindent E-mails: {\tt joyce@maths.ox.ac.uk, 
upmeier@maths.ox.ac.uk.}}

\end{document}